\documentclass[11pt]{article}

\usepackage[a4paper,margin=1in]{geometry}
\usepackage{amsmath,amssymb,amsthm}
\usepackage{mathrsfs,dsfont}
\usepackage{xcolor}

\numberwithin{equation}{section}

\newtheorem{theorem}{Theorem}[section]
\newtheorem{condition}[theorem]{Condition}
\newtheorem{lemma}[theorem]{Lemma}
\newtheorem{proposition}[theorem]{Proposition}

\theoremstyle{definition}
\newtheorem{remark}[theorem]{Remark}
\newtheorem{example}[theorem]{Example}
\theoremstyle{plain}

\newcommand{\R}{\mathbb R}
\newcommand{\E}{\mathbb E}
\newcommand{\Pp}{\mathbb P}
\newcommand{\dd}{\mathrm d}

\title{Finite Explosion for One-dimensional L\'evy Driven SDEs and its Application to SPDEs}
\author{Pei-Sen Li \thanks{School of Mathematics and Statistics, Beijing Institute of Technology, Beijing, China, 100872; {\texttt peisenli@bit.edu.cn}}\qquad
Yuichi Shiozawa\thanks{Department of Mathematical Sciences, Faculty of Science and Engineering,
Doshisha University,
1-3, Tatara Miyakodani, Kyotanabe, Kyoto, 610-0394,
Japan; \texttt{yshiozaw@mail.doshisha.ac.jp}}\qquad
Jian Wang\thanks{School of Mathematics and Statistics \& Key Laboratory of Analytical Mathematics and Applications (Ministry of Education) \& Fujian Provincial Key Laboratory of Statistics and Artificial Intelligence,
Fujian Normal University, Fuzhou, 350007, P.R. China; \texttt{jianwang@fjnu.edu.cn}}}

\date{}

\begin{document}

\maketitle

\begin{abstract}This paper addresses finite-time explosion of one-dimensional stochastic differential equations (SDEs) driven by pure-jump L\'evy processes
\[
X_{t}^{x}=x-\int_{0}^{t}b(X_{s}^{x})\,\dd s+L_{t},
\]
where the drift coefficient $b$ is locally Lipschitz continuous, and $(L_t)_{t\ge0}$ is a pure jump L\'evy process.
We formulate three sets of conditions: a right-tail return condition; a
nondegeneracy condition together with a left-tail Osgood bound; and a
right-tail Osgood bound.  The first two imply almost-sure explosion to
\(-\infty\) from every finite initial state, whereas the latter two imply a
uniform bound on the mean explosion time.
As an application, we give explicit conditions on the nonlinearity and the
L\'evy measure under which every local weak solution of a semilinear parabolic stochastic partial differential equation
(SPDE) with additive L\'evy space--time white noise and homogeneous Dirichlet
boundary conditions has an almost surely finite lifetime.

\smallskip

\noindent\textbf{Keywords:} L\'evy-driven SDE; finite-time explosion; the Osgood condition; SPDE with L\'evy space-time white noise
\end{abstract}

\section{Introduction}
\subsection{Background}
Finite-time explosion (blow-up) is a fundamental phenomenon for stochastic
differential equations (SDEs), referring to sample trajectories leaving every
bounded set in finite random time.  For the ordinary differential equation (ODE) $\dot x=-b(x)$ with $b>0$, the
classical Osgood criterion characterizes finite-time explosion to $-\infty$ by
$\int_{-\infty} \dd u/b(u)<\infty$.  For one-dimensional
diffusions, Feller's classification and integral tests determine whether an
infinite boundary is accessible \cite{Feller54}.  We ask the corresponding
question when the continuous noise is replaced by additive pure-jump noise.
More precisely, we study
\[
\dd X_t = -b(X_t)\,\dd t + \dd L_t,\quad X_0=x\in \R
\]
with a locally Lipschitz drift $b$ and a pure-jump L\'evy process
$(L_t)_{t\ge0}$.  Motivated by the stochastic partial differential equation (SPDE) application below, we study the
accessibility of the left boundary $-\infty$.

We give explicit conditions on \(b\) and \(\nu\) under which the process
explodes to \(-\infty\) almost surely from every finite initial state:
\[
    \Pp(T_-^x<\infty)=1,\qquad x\in\R.
\]
The assumptions separate two parts of the explosion mechanism.  The
nondegeneracy and left-tail Osgood conditions yield a uniform positive lower
bound on the probability of explosion within a fixed time over all initial
states below any prescribed level.  The right-tail return condition allows
this estimate to be restarted on survival, and the strong Markov property then
yields almost-sure explosion.  Under the same nondegeneracy and left-tail
assumptions, the right-tail Osgood condition yields a uniform bound on the mean
time required either to enter a fixed compact interval or to explode.  Together
with the renewal step, this gives
\[
    \sup_{x\in\R}\E T_-^x<\infty.
\]
This stopping-time iteration is
analogous to the cycle argument used by Chow and
Khasminskii~\cite{ChowKhasminskii14} for diffusions.  Establishing the required
uniform estimates for a pure-jump equation requires
controlling overshoots and the compensated small-jump drift.
We emphasize that, the counterexample in Example~\ref{ex:b-irreducibility-counterexample} shows
that
topological irreducibility does not by itself provide the required right-tail
control.

Several explosion criteria are known for special classes of one-dimensional
jump processes.  For stable jump diffusions of the form
\(\dd Z_t=\sigma(Z_{t-})\,\dd S_t\), D\"oring and
Kyprianou~\cite{DoringKyprianou20} obtained necessary and sufficient integral
criteria for explosion; see also Baguley, D\"oring and
Kyprianou~\cite{BaguleyDoringKyprianou24} and Baguley, D\"oring and
Shi~\cite{BaguleyDoringShi26} for related results on stable or time-changed
L\'evy models.  For a class of SDEs with nonnegative jumps, Li, Yang and
Zhou~\cite{LiYangZhou19} gave criteria for non-explosion and
positive-probability explosion.   To our knowledge, for the additive equation considered here the
existing literature does not provide explicit criteria for almost-sure
explosion from every finite initial state or for a uniform bound on the mean
explosion time.  We establish both conclusions under conditions stated
directly on the drift and the L\'evy measure.

The preceding SDE result also applies to stochastic reaction--diffusion
equations.
For the equation driven by additive Gaussian space--time white noise,
Fern\'andez Bonder and Groisman~\cite{BG} used the first-eigenfunction
projection and Feller's test to prove that the Osgood condition is sufficient
for almost-sure finite-time blow-up.  Foondun and
Nualart~\cite{FoondunNualart21} later proved the converse under suitable
assumptions, obtaining an Osgood characterization on a bounded interval.
In a different direction, Shiozawa and Wang~\cite{SW} obtained explicit
integral tests for the spatial asymptotics of linear fractional stochastic
heat equations driven by additive L\'evy white noise.

For additive pure-jump space--time white noise, we test the weak equation
against the normalized positive first Dirichlet eigenfunction and apply
Jensen's inequality to the convex reaction term.  The projected noise is a
one-dimensional pure-jump L\'evy process, and the resulting pathwise comparison
places the weighted projection of the SPDE solution above the solution of a
L\'evy-driven SDE.  Our SDE criterion therefore implies that every local weak
solution has an almost surely finite lifetime.  This finite-lifetime result
differs from the mean \(L^p\)-norm blow-up criteria of Bao and
Yuan~\cite{BaoYuan16} for equations with state-dependent Gaussian and jump
coefficients and of Li, Peng and Jia~\cite{LiPengJia17} for positive solutions
with multiplicative L\'evy-type noise.

\subsection{Main result: Finite explosion for one-dimensional L\'evy driven SDEs}
Let \(\nu\) be a L\'evy measure on \(\R\setminus\{0\}\) such that
\[
    \nu(\R\setminus\{0\})>0,
    \quad
    \int_{\R\setminus\{0\}}(1\wedge z^2)\,\nu(\dd z)<\infty,
\]
and let
\(\mathcal N(\dd s,\dd z)\) be a Poisson random measure on
\([0,\infty)\times(\R\setminus\{0\})\) with intensity
\(\dd s\,\nu(\dd z)\).  Write
\(\widetilde{\mathcal N}(\dd s,\dd z)
=\mathcal N(\dd s,\dd z)-\dd s\,\nu(\dd z)\) on \(\{|z|\le1\}\)
for the compensated measure of \(\mathcal N(\dd s,\dd z)\).
For any $t\ge0$, let
\[
    L_t
    :=
    \int_0^t\int_{\{|z|\le1\}}z\,\widetilde{\mathcal N}(\dd s,\dd z)
    +
    \int_0^t\int_{\{|z|>1\}}z\,\mathcal N(\dd s,\dd z).
\]
Let \((\mathcal F_t)_{t\ge0}\) be the usual natural filtration generated
by \(\mathcal N\).

Throughout the paper, we assume that \(b:\R\to\R\) is locally
Lipschitz.  For each \(x\in\R\), consider the integral equation before the
explosion time:
\begin{equation}\label{eq:b-finite-line-equation}
    X_t^x
    =
    x-\int_0^t b(X_s^x)\,\dd s+L_t.
\end{equation}

For \(a\in\R\), define its downward and upward hitting times respectively by
\[
    \tau_{a,-}^x:=\inf\{t\ge0:X_t^x\le a\},
    \quad
    \tau_{a,+}^x:=\inf\{t\ge0:X_t^x\ge a\}
\]
with the convention \(\inf\emptyset=\infty\), and define the left and right
explosion times respectively by
\[
    T_-^x:=\lim_{n\to\infty}\tau_{-n,-}^x,
    \quad
    T_+^x:=\lim_{n\to\infty}\tau_{n,+}^x .
\]
Since \(b\) is locally Lipschitz, \eqref{eq:b-finite-line-equation} has a unique
strong c\`adl\`ag solution \(X^x_t\) for \(0\le t<T_-^x\wedge T_+^x\).  At \(T_-^x\)
(respectively, \(T_+^x\)) the process  \(X^x_t\) is sent to \(-\infty\) (respectively,
\(+\infty\)), which is then absorbing.
With this convention, \(X^x:=(X_t^x)_{t\ge0}\), taking values in
\(\R\cup\{-\infty,+\infty\}\), is a time-homogeneous strong Markov process
with respect to \((\mathcal F_t)_{t\ge0}\).
We refer to \cite[Theorem 38, p.~303]{P} for the existence and uniqueness
of solutions to \eqref{eq:b-finite-line-equation} up to
\(T_-^x\wedge T_+^x\).

We first state the three conditions used in our general results.

\begin{condition}[Return from the right tail]
\label{cond:b-right-return}
There exists \(R\in\R\) such that
\[
    \Pp(\tau_{R,-}^x<\infty)=1\quad \text{for every }x>R .
\]
\end{condition}

\begin{condition}[Compact access and left-tail explosion]
\label{cond:b-as-coeff}
Set
\[
    \kappa_+:=\int_{(0,1]} z\,\nu(\dd z)\in[0,\infty]
\]
with the convention that \(b(x)+\kappa_+=\infty\) if \(\kappa_+=\infty\).
The following conditions hold.
\begin{enumerate}
\renewcommand{\labelenumi}{\textup{(\roman{enumi})}}
\item
\[
    b(x)+\kappa_+>0\quad\text{for every }x\in\R
    \quad\text{or}\quad
    \nu((-\infty,0))>0 .
\]
\item There exist \(a_-\in\R\) and a nonincreasing
function
\(g_-:(-\infty,a_-]\to(0,\infty)\) such that
\[
    b(x)\ge g_-(x),\quad x\le a_-,
\]
and
\[
    \int_{-\infty}^{a_-}\frac{\dd u}{g_-(u)}<\infty .
\]
\end{enumerate}
\end{condition}

\begin{condition}[Right-tail Osgood condition]
\label{cond:b-lyapunov}
There exist \(a_+\in\R\) and a nondecreasing
function
\(g_+:[a_+,\infty)\to(0,\infty)\) such that
\[
    b(x)\ge g_+(x),\quad x\ge a_+,
\]
and
\[
    \int_{a_+}^\infty \frac{\dd u}{g_+(u)}<\infty;
\]
\end{condition}

The main results in this paper are as follows.
\begin{theorem}[Almost sure explosion]\label{thm:b-as-explosion}
Assume that Conditions~$\ref{cond:b-right-return}$ and $\ref{cond:b-as-coeff}$ hold.  Then
\begin{equation}\label{eq:b-as-explosion}
    \Pp(T_-^x<\infty)=1,\quad x\in\R .
\end{equation}
\end{theorem}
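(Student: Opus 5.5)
The strategy is a renewal (cycle) argument built on one uniform estimate. We will show that for every level \(c\in\R\) there exist \(t_0>0\) and \(p>0\) such that
\begin{equation}\label{eq:plan-uniform}
    \inf_{x\le c}\Pp\bigl(T_-^x\le t_0\bigr)\ge p .
\end{equation}
Given \eqref{eq:plan-uniform} with \(c=R\) from Condition~\ref{cond:b-right-return}, we argue as follows. Fix \(x\). Set \(\sigma_0:=\tau^x_{R,-}\), which is finite a.s. on \(\{T_-^x\wedge T_+^x>\sigma_0\}\) by Condition~\ref{cond:b-right-return}; this is immediate if \(x\le R\). We must check that \(T_+^x<\infty\) cannot occur before returning below \(R\). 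This follows because Condition~\ref{cond:b-right-return} says that from any \(x>R\) the level \(R\) is reached a.s., and this rules out explosion to \(+\infty\) first.

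Inductively, let \(\sigma_{k+1}\) be the first time after \(\sigma_k+t_0\) at which \(X^x\le R\), on the event that the process is still finite. By the strong Markov property at \(\sigma_k\), where \(X^x_{\sigma_k}\le R\), we get \(\Pp(T_-^x\le\sigma_k+t_0\mid\mathcal F_{\sigma_k})\ge p\). On survival, \(X^x_{\sigma_k+t_0}\) is a finite state, so either it lies \(\le R\) or Condition~\ref{cond:b-right-return} and the strong Markov property give \(\sigma_{k+1}<\infty\) a.s. Hence \(\Pp(T_-^x=\infty)\le(1-p)^k\) for all \(k\), which proves \eqref{eq:b-as-explosion}.

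To prove \eqref{eq:plan-uniform}, we split it into two pieces. First, from any \(x\le c\) the process reaches the half-line \((-\infty,a]\) within time \(1\) with probability at least \(p_1>0\), uniformly in \(x\le c\), where \(a\le a_-\) is a suitably low level. Second, from any \(y\le a\), explosion occurs within time \(1\) with probability at least \(p_2>0\).

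For the second piece we use the left-tail Osgood condition. Split \(L=L^{(1)}+L^{(2)}\) into small jumps (compensated, \(|z|\le\delta\)) and the rest. On the event that \(L^{(2)}\) has no jumps on \([0,1]\) and \(\sup_{s\le1}|L^{(1)}_s|\le 1\), both of which have positive probability, we compare with the ODE \(\dot y=-g_-(y-1)\), or more carefully work with \(Y=X-L^{(1)}\). Monotonicity of \(g_-\) gives \(\dot Y\le -g_-(Y+1)\) while \(Y\le a_--1\). Since \(\int_{-\infty}\dd u/g_-<\infty\), the ODE from \(y\le a\) reaches \(-\infty\) in time at most \(\int_{-\infty}^{a+1}\dd u/g_-(u)\). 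Choosing \(a\) low makes this less than \(1\), uniformly in \(y\le a\). Thus \(p_2=\Pp(\text{no big jumps})\,\Pp(\sup|L^{(1)}|\le1)>0\).

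For the first piece, uniformity is needed only on the compact \([a,c]\), since points below \(a\) are already handled. Here Condition~\ref{cond:b-as-coeff}(i) is used. If \(\nu((-\infty,0))>0\), we use a large enough number of negative jumps within a short time, interleaved with drift controlled by \(\sup_{[a-K,c]}|b|\), to move from \([a,c]\) below \(a\). If instead \(b+\kappa_+>0\) everywhere (so \(\nu\) is supported on \((0,\infty)\)), write \(X_t=x-\int_0^t(b(X_s)+\kappa_+)\,\dd s+\sum_{s\le t}\Delta L_s\), interpreting this with care when \(\kappa_+=\infty\): then \(L\) restricted to small jumps is of unbounded variation downward-drift type, and we truncate jumps at \(\delta\), whose compensator \(\int_{(\delta,1]}z\,\nu(\dd z)\) is large. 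On the event of no jumps of size \(>\delta\) in a time window, the process follows the ODE with strictly positive speed \(b+\kappa_+^\delta\ge\eta>0\) on the compact set, together with small increasing jumps. We handle this by running the ODE with a tiny jump cut-off \(\delta\) such that \(\min_{[a,c+1]}(b+\int_{(\delta,1]}z\,\nu(\dd z))>0\) by continuity and compactness, and controlling the remaining compensated jumps \(\le\delta\) by Doob's inequality.

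Lower semicontinuity in \(x\), or a direct uniform estimate on \([a,c]\), then gives a uniform \(p_1\). Combining the two pieces via the strong Markov property at \(\tau^x_{a,-}\) gives \eqref{eq:plan-uniform} with \(t_0=2\) and \(p=p_1p_2\).

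The main obstacle is the first piece in the case \(\kappa_+=\infty\) or with overshoot issues. Obtaining a uniform lower bound requires controlling the compensated small-jump martingale so that it does not push the path back up, while the drift \(b\), which may be negative somewhere, is compensated only by the truncated compensator.
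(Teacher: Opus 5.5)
Your proposal is correct in outline and has the same architecture as the paper: a uniform fixed-time bound $\inf_{x\le R}\Pp(T_-^x\le t_*)\ge p$ (Proposition~\ref{prop:b-halfline-explosion}) fed into the renewal scheme of Proposition~\ref{prop:b-as-renewal}. That scheme's stopping times $\theta_{n+1}^x=\inf\{t\ge\theta_n^x+t_*:X_t^x\le R\}$ are exactly your $\sigma_k$. The difference lies inside the uniform estimate.

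The paper builds, in each of three cases, one event that does not depend on $x$. On it, every path from $x\le A$ is driven below $-a-c-1$ at a stopping time $S$: the deterministic time $T_0$, the $m$-th negative jump $S_m$, or the first passage of $L$ below $-D$. It then applies Lemma~\ref{lem:b-left-tail-entrance}, which needs only the one-sided bound $\sup_{0\le t\le 1}(L_{S+t}-L_S)\le c$. Uniformity in $x$ is automatic, and no strong Markov property of $X$ at an $x$-dependent time is used.

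You instead factor $p=p_1p_2$ through the strong Markov property at $\tau^x_{a,-}$. This makes overshoot harmless, because your second piece is uniform over all of $(-\infty,a]$. The price is that you need two-sided control of the small jumps and no jumps of size $>\delta$ during the explosion phase; that is still a positive-probability event. Your truncation with $c_\delta=\int_{(\delta,1]}z\,\nu(\dd z)\uparrow\kappa_+\in[0,\infty]$ makes $b+c_\delta$ bounded below by a positive constant on $[a,c+1]$ for small $\delta$. Together with one Doob estimate, this handles the paper's Cases~(i) and~(iii) at once and replaces Lemmas~\ref{lem:b-small-subordinator-event} and~\ref{lem:b-infinite-positive-variation-lower-event}, a mild simplification.

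Three details need repair, none of them structural.

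\textbf{Upward explosion.} You rule out explosion to $+\infty$ before the first return below $R$, and, via the strong Markov property at $\sigma_k+t_0$, on $[\sigma_k+t_0,\sigma_{k+1}]$. But an excursion started at $X^x_{\sigma_k}\le R$ could a priori explode upward inside $[\sigma_k,\sigma_k+t_0]$. Then ``on survival, $X^x_{\sigma_k+t_0}$ is a finite state'' fails while $T_-^x=\infty$. You need $\Pp(T_+^y=\infty)=1$ for every $y$. The paper gets this in Lemma~\ref{lem:b-no-right-explosion-from-return}(i): on $\{T_+^y<\infty\}$ the path stays above $R$ after some rational time $q$, and each such event is null by the Markov property at $q$ and Condition~\ref{cond:b-right-return}.

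\textbf{Residual drift in the explosion phase.} On the event that $L^{(2)}$ has no jumps on $[0,1]$, you still have $L^{(2)}_t=-t\int_{\{\delta<|z|\le1\}}z\,\nu(\dd z)$, which is in general a nonzero linear drift. It is bounded on $[0,1]$, so choose $\delta$ first and then absorb this constant into the choice of $a$.

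\textbf{Time horizon and drift window in the first piece.} Reaching $(-\infty,a]$ from near $c$ takes time of order $(c-a)/\eta$, not $1$. Use a fixed horizon $t_1$ and set $t_0=t_1+1$. In the negative-jump case, the drift must be bounded on $[a,M]$, not on $[a-K,c]$. Here $M>c$ is chosen, as in the paper, so that the path cannot reach $M$ before the $m$-th negative jump. The noise can lift the path above $c$, where $-b$ need not be bounded.
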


\begin{theorem}[Finite mean explosion]\label{thm:b-main}
Assume that Conditions~$\ref{cond:b-as-coeff}$ and $\ref{cond:b-lyapunov}$ hold.  Then
\begin{equation}\label{eq:b-finite-mean-explosion}
    \sup_{x\in\R}\E T_-^x<\infty .
\end{equation}
\end{theorem}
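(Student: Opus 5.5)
We prove Theorem~\ref{thm:b-main} by combining a uniform estimate for explosion started from the left region with a uniform bound on the time needed to come back from the right tail. The two pieces are glued by a renewal argument based on the strong Markov property.

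\emph{Step 1 (uniform positive probability of fast explosion).} We fix a level $c\le a_-$ and produce $t_0>0$ and $p_0>0$ such that
\[
\Pp(T_-^x\le t_0)\ge p_0 \quad\text{for all } x\le c .
\]
This is the same estimate that drives Theorem~\ref{thm:b-as-explosion}.
\begin{itemize}
\item Starting below $a_-$, the drift satisfies $-b\le -g_-$. On the event that the noise stays controlled on $[0,t_0]$, we compare $X^x$ with the ODE $\dot y=-g_-(y)+\text{(bounded perturbation)}$.
\item The controlled event is that no large jumps occur and the compensated small-jump martingale stays in a small band. This event has probability bounded below uniformly in $x$, because the noise is additive and does not depend on $x$.
\item Since $g_-$ is nonincreasing and $\int_{-\infty}^{a_-}\dd u/g_-(u)<\infty$, the ODE started from any $y\le c$ reaches $-\infty$ before a time bounded by $\int_{-\infty}^{c}\dd u/g_-(u)$. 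Monotonicity of $g_-$ makes this bound uniform: starting lower only helps.
\item If $\kappa_+=\infty$, the positive small jumps could push the process upward. We handle this by truncating at a level $\delta$ and absorbing $\int_{(\delta,1]}z\,\nu(\dd z)$ into the drift. This costs a constant, which is negligible once $g_-$ is large, so we take $c$ very negative.
\item Condition~\ref{cond:b-as-coeff}(i) is needed only to get from a compact interval into $(-\infty,c]$ with positive probability in bounded time. It does so either by drift domination or by a negative jump.
\end{itemize}
From these we obtain: for every compact $K=[c,a]$ there exist $t_1>0$ and $p_1>0$ with $\Pp(T_-^x\le t_1)\ge p_1$ for all $x\le a$.

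\emph{Step 2 (uniform return time from the right).} For $x\ge a$ with $a\ge a_+$ large, we bound
\[
\E\big[\tau_{a,-}^x\wedge T_+^x\big]\le C \quad\text{uniformly in } x .
\]
\begin{itemize}
\item We use a Lyapunov function $V(x)=\int_a^{x}\dd u/g_+(u)$ for $x\ge a$, which is bounded by $\int_{a_+}^\infty \dd u/g_+<\infty$, extended smoothly and boundedly to the left of $a$.
\item Applying It\^o's formula, the drift term gives $-b(x)V'(x)\le -1$.
\item The jump part of the generator, $\int[V(x+z)-V(x)-V'(x)z1_{|z|\le1}]\nu(\dd z)$, must be shown bounded above by $1/2$ for $x$ large. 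This uses $V'=1/g_+\to0$ monotonically, since $g_+$ is nondecreasing, so $V''\le 0$ on $[a_+,\infty)$. Concavity makes the small-jump compensator term nonpositive, and large jumps contribute at most $\nu(|z|>1)\cdot 2\sup|V|$. That last contribution is the obstacle: it is not small. We fix it by scaling, replacing $V$ by $\lambda V$ is useless, so instead we truncate: for large jumps we use $V(x+z)-V(x)\le \int_x^\infty \dd u/g_+(u)\to0$ as $x\to\infty$ for upward jumps, while downward jumps give a negative contribution.
\end{itemize}
With this, for $a$ large we get $\mathcal L V\le -1/2$ on $(a,\infty)$. Dynkin's formula plus boundedness of $V$ then gives the uniform bound. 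In particular $T_+^x=\infty$ is not an issue, since the bound also shows that the process cannot reach $+\infty$ before returning.

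\emph{Step 3 (renewal).} We define the cycles as follows:
\begin{itemize}
\item From any state, wait until time $\tau_{a,-}$, whose expectation is at most $C$ by Step 2.
\item Then run for time $t_1$; explosion occurs with probability at least $p_1$.
\item On failure, the process is at some finite point, or at $+\infty$, which is excluded by Step 2, and we restart by the strong Markov property.
\end{itemize}
Each cycle has expected length at most $C+t_1$ uniformly, and the number of cycles is dominated by a geometric variable with success probability $p_1$. Hence
\[
\sup_x \E T_-^x\le \frac{C+t_1}{p_1}.
\]

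The main obstacle is Step 2: controlling the nonlocal generator applied to $V$ uniformly, especially the compensated small jumps when $\nu$ has heavy mass near $0$ and the overshoot from big positive jumps. Concavity of $V$ on the right tail is the key tool here.
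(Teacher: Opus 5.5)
Your proof is correct in outline, but it divides the problem differently from the paper.

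\textbf{How the paper proceeds.} The paper builds one bounded $\Phi$ whose derivative mollifies $1/g_-$ on the left tail and $1/g_+$ on the right tail (Lemmas~\ref{lem:b-smooth-tail-functions} and~\ref{lem:b-tail-scale}). It shows $\mathcal I\Phi(x)\to0$ as $|x|\to\infty$ by dominated convergence, and deduces $\sup_{x\in\R}\E[T_-^x\wedge\tau_{K_0}^x]<\infty$ for a compact $K_0$ (Proposition~\ref{prop:b-lyapunov-input}). Proposition~\ref{prop:b-renewal} then needs the explosion estimate only on $K_0$. It avoids assuming finiteness a priori by working with $F_N(x)=\E[T_-^x\wedge N]$.

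\textbf{How your route differs.} You renew on entrance into the half-line $(-\infty,a]$, using that Proposition~\ref{prop:b-halfline-explosion} is already uniform over all $x\le a$. This makes the left half of $\Phi$ unnecessary. Condition~\ref{cond:b-as-coeff} enters only through the explosion estimate, and Condition~\ref{cond:b-lyapunov} only through the return-time bound. Your one-sided control of the jump term replaces dominated convergence: concavity handles small jumps, monotonicity of $V$ handles downward jumps, and the tail integral $\int_x^\infty\dd u/g_+(u)$ handles upward jumps. The geometric cycle count becomes rigorous via the strong Markov bound $\E\bigl[\ell_k\mathbf 1_{\{\text{cycles }1,\dots,k-1\text{ fail}\}}\bigr]\le(C+t_1)(1-p_1)^{k-1}$, where $\ell_k$ is the length of the $k$-th cycle; this sidesteps the truncation. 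The paper's route yields the stronger intermediate fact of a uniform entrance time into a compact set from every starting point; yours is leaner.

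\textbf{A step that needs a different justification.} A bound on $\E[\tau_{a,-}^x\wedge T_+^x]$ says nothing about which of the two times comes first, so it does not exclude right explosion. For example, take $\dot x=x^2$, i.e.\ $b(x)=-x^2$. The bounded function $V(x)=1/(2x)$ on $[1,\infty)$ satisfies $\mathcal LV=-\tfrac12$, yet the path explodes to $+\infty$ without going below $1$. A right explosion would force $T_-^x=\infty$, so you genuinely need $\Pp(T_+^x=\infty)=1$. It follows from $b\ge g_+>0$ on $[a_+,\infty)$ by the pathwise argument in the second part of Lemma~\ref{lem:b-no-right-explosion-from-return}, which is what the paper invokes.

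\textbf{Smaller points.}
\begin{itemize}
\item Since $g_+$ is only monotone, $\int\dd u/g_+$ is not $C^2$. Mollify as in Lemma~\ref{lem:b-smooth-tail-functions}; the resulting $q_+$ is still nonincreasing and at least $1/g_+$, so your concavity argument survives.
\item Define $V$ once, with $V'=q_+$ on $[a_+,\infty)$ and $V$ bounded and nondecreasing on $\R$. Then take $a\ge a_++1$ large, so that $V$ is concave on $[a-1,\infty)$ and all three jump estimates hold for every $x>a$.
\item In Step~1, the absorbed term $\int_{(\delta,1]}z\,\nu(\dd z)$ enters $L$ as a \emph{downward} drift, so it costs nothing. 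It is exactly what carries the path from a compact set into the far-left region when $\nu((-\infty,0))=0$ and $\kappa_+=\infty$ (Lemma~\ref{lem:b-infinite-positive-variation-lower-event}).
\end{itemize}
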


We make a few comments on Conditions~\ref{cond:b-right-return},
\ref{cond:b-as-coeff} and \ref{cond:b-lyapunov}.

\begin{remark}
\label{rem:b-assumption-roles}
\begin{itemize}
\item[{\rm(i)}] Condition~\ref{cond:b-right-return} guarantees that, from every
sufficiently large starting point, the process $X^x$ goes below to a lower level
almost surely.  It is necessary for \eqref{eq:b-as-explosion}: if
\(T_-^x<\infty\) almost surely for all \(x\), then, for every finite \(R\) and
every \(x>R\), \(\tau_{R,-}^x<\infty\) almost surely.
Lemma~\ref{lem:b-right-lyapunov-return} below gives a verifiable Lyapunov
sufficient condition for Condition~\ref{cond:b-right-return}.
\item[{\rm(ii)}] Condition~\ref{cond:b-as-coeff}(i) is also necessary for
\eqref{eq:b-as-explosion}.  If it failed, then
\(\nu((-\infty,0))=0\), \(\kappa_+<\infty\), and
\(b(x_0)+\kappa_+\le0\) for some \(x_0\in\R\).  In that case,
\[
    X_t^x
    =
    x-\int_0^t\bigl(b(X_s^x)+\kappa_+\bigr)\,\dd s+J_t,
    \quad 0\le t<T_-^x\wedge T_+^x,
\]
where \(J_t:=L_t+\kappa_+t\), \(t\ge0\), is nondecreasing.  Fix
\(x\ge x_0\).  For \(n>|x|\vee|x_0|\), set
\(\eta_n:=\tau_{-n,-}^x\wedge\tau_{n,+}^x\), and let \(C_n\) be a
Lipschitz constant of \(b\) on \([-n,n]\).  The stopped process
\(X^x_{\cdot\wedge\eta_n}:=(X^x_{t\wedge\eta_n})_{t\ge0}\) has no continuous local martingale part, so its
semimartingale local time vanishes.  Moreover, since \(J:=(J_t)_{t\ge0}\) is nondecreasing,
all jumps of \(X^x:=(X^x_t)_{t\ge0}\) are nonnegative and can only decrease
\((x_0-X^x)^+\).  The Meyer--It\^o formula applied to
\(r\mapsto(x_0-r)^+\) (see \cite[Theorem~4.4.29]{Applebaum04}) therefore gives that for all $t\ge0$,
\[
\begin{aligned}
\bigl(x_0-X_{t\wedge\eta_n}^x\bigr)^+
&\le
\int_0^{t\wedge\eta_n}
\mathbf 1_{\{X_s^x<x_0\}}\bigl(b(X_s^x)+\kappa_+\bigr)\,\dd s\\
&\le
\int_0^{t\wedge\eta_n}
\mathbf 1_{\{X_s^x<x_0\}}\bigl(b(X_s^x)-b(x_0)\bigr)\,\dd s\\
&\le
C_n\int_0^{t\wedge\eta_n}(x_0-X_s^x)^+\,\dd s\\
&\le
C_n\int_0^t\bigl(x_0-X_{s\wedge\eta_n}^x\bigr)^+\,\dd s .
\end{aligned}
\]
Gronwall's inequality yields
\((x_0-X_{t\wedge\eta_n}^x)^+=0\).  Letting \(n\to\infty\), we obtain
\[
    X_t^x\ge x_0,\quad 0\le t<T_-^x\wedge T_+^x.
\]
Consequently, \(\Pp(T_-^x<\infty)=0\) for every \(x\ge x_0\).

\item[{\rm(iii)}] Condition~\ref{cond:b-as-coeff}(ii) supplies the Osgood mechanism after the
process $X^x$ has reached the far-left region.  Together with
Condition~\ref{cond:b-as-coeff}(i), it yields the uniform positive explosion
probability in Proposition~\ref{prop:b-halfline-explosion}; the almost-sure
conclusion additionally uses Condition~\ref{cond:b-right-return} and the renewal
step.
\item[{\rm(iv)}] Together with Condition~\ref{cond:b-as-coeff}(ii),
Condition~\ref{cond:b-lyapunov} gives a uniform mean bound for reaching a fixed
finite level from arbitrarily large positive states.  This is the extra
condition needed for \eqref{eq:b-finite-mean-explosion}, and
Proposition~\ref{cor:b-right-return-from-osgood} below shows that the same two conditions
imply Condition~\ref{cond:b-right-return}.
\end{itemize}
\end{remark}

As shown in Remark~\ref{rem:b-assumption-roles}(i),
Condition~\ref{cond:b-right-return} is necessary for
\eqref{eq:b-as-explosion}.  The following example shows that, even under
Condition~\ref{cond:b-as-coeff}, topological irreducibility cannot replace it.
\begin{example}[Irreducibility does not replace right-tail return]
\label{ex:b-irreducibility-counterexample}
Fix \(\alpha\in(1,2)\) and \(c_\alpha,\gamma>0\).  Let \((L_t)_{t\ge0}\) be a symmetric
\(\alpha\)-stable process with L\'evy measure as follows
\[
    \nu(\dd z)=c_\alpha |z|^{-1-\alpha}\,\dd z,
    \quad z\in\R\setminus\{0\},
\]
and choose \(b\in C^\infty(\R)\) such that
\[
    b(x)=x^2,\,\, x\le-1,
    \quad
    b(x)=-\gamma,\,\,x\ge0.
\]
Then Condition~\ref{cond:b-as-coeff} holds with
\(a_-=-1\) and \(g_-(x)=x^2\).  For a nonempty open set \(U\subset\R\), set
\[
    \tau_U^x:=\inf\{t\ge0:X_t^x\in U\}.
\]
For every \(x\in\R\) and every nonempty open set \(U\subset\R\),
the full support
property
of the symmetric stable noise gives some \(t_0>0\) such that
\[
    \Pp(\tau_U^x\le t_0)>0.
\]
Thus \(X^x\) is topologically irreducible on \(\R\).

Since \(\alpha>1\), \(\E|L_1|<\infty\); and the symmetry gives \(\E L_1=0\).
The strong law of large numbers for
the symmetric
\(\alpha\)-stable process $(L_t)_{t\ge0}$ therefore yields
\[
    \frac{L_t}{t}\rightarrow0,
   \quad
    \frac{\gamma t+L_t}{t}\rightarrow\gamma>0,
    \quad
    \inf_{t\ge0}(\gamma t+L_t)>-\infty
    \,\,\textrm{ almost surely}.
\]
For \(x>0\), define
\[
    A_x:=\left\{\inf_{t\ge0}(x+\gamma t+L_t)>0\right\}.
\]
Then
\[
    \lim_{x\to\infty}\Pp(A_x)=1.
\]
In particular, \(\Pp(A_x)>0\) for every sufficiently large \(x\).  On \(A_x\),
\(x+\gamma s+L_s>0\) for every \(s\ge0\), and hence
\[
    b(x+\gamma s+L_s)=-\gamma,
    \quad s\ge0.
\]
Therefore, for every \(t\ge0\),
\[
    x-\int_0^t b(x+\gamma s+L_s)\,\dd s+L_t
    =x+\gamma t+L_t
    \quad\text{on }A_x.
\]
Thus \(t\mapsto x+\gamma t+L_t\) is a global solution of
\eqref{eq:b-finite-line-equation} on \(A_x\).  Pathwise uniqueness gives
\[
    X_t^x=x+\gamma t+L_t>0,
    \quad t\ge0,
\]
and consequently
\[
    \Pp(T_-^x=\infty)
    \ge
    \Pp(A_x)
    >0
\]
for all sufficiently large \(x\).  Thus, the topological irreducibility, even
together with Condition~\ref{cond:b-as-coeff}, does not imply
\eqref{eq:b-as-explosion}; Condition~\ref{cond:b-right-return}, or a stronger
recurrence assumption, is still needed.
\end{example}

\begingroup
\def\d{\mathrm d}

\subsection{Application to an SPDE with additive L\'evy noise}
We apply Theorem~\ref{thm:b-as-explosion} to the parabolic SPDE
\begin{equation}\label{e:SPDE}
    u_t=u_{xx}+f(u)+\sigma\Lambda(\d x,\d t)
\end{equation}
on \((0,1)\), with initial condition \(u(0)=u_0\) and homogeneous
Dirichlet boundary conditions.  Here \(\sigma>0\),
\(f:\R\to\R\) is locally Lipschitz, and \(\Lambda\) is a L\'evy
space--time white noise.  For this application, let
\((\Omega,\mathcal F,(\mathcal G_t)_{t\ge0},\Pp)\) be a filtered probability
space satisfying the usual conditions.  Let \(\lambda\) be a
nontrivial L\'evy measure on
\(\R\setminus\{0\}\), and let \(\mu\) be a Poisson random measure relative
to \((\mathcal G_t)_{t\ge0}\) on
\[
    (0,1)\times(\R\setminus\{0\})\times(0,\infty)
\]
with intensity
\(\widehat\mu(\d x,\d z,\d t):=\d x\,\lambda(\d z)\,\d t\), and set
\[
\Lambda(\d x,\d t)
=\int_{\{|z|\le1\}}z(\mu-\widehat\mu)(\d x,\d z,\d t)
+\int_{\{|z|>1\}}z\,\mu(\d x,\d z,\d t).
\]

We first specify the state space and the precise meaning of a solution to
\eqref{e:SPDE}.  Let \(H^1(0,1)\) be the Sobolev space of functions in
\(L^2(0,1)\) whose weak derivative also belongs to \(L^2(0,1)\), let
\(H_0^1(0,1)\) be the closure of \(C_c^\infty(0,1)\) in \(H^1(0,1)\), and set
\[
    H^{-1}(0,1):=(H_0^1(0,1))^*.
\]
Writing \(\langle\cdot,\cdot\rangle\) for the duality pairing between
\(H^{-1}(0,1)\) and \(H_0^1(0,1)\), the inequality
\(\|\psi\|_\infty\le \|\psi'\|_2\), valid for every
\(\psi\in H_0^1(0,1)\), shows that
\(L^1(0,1)\) functions and finite signed Radon measures embed continuously
into \(H^{-1}(0,1)\); in particular,
\(\langle\delta_x,\psi\rangle=\psi(x)\) for \(x\in(0,1)\).

This state-space choice is consistent with
\cite[Theorem~2.5 and Remark~2.6]{ChongDalangHumeau19}, where c\`adl\`ag paths
are obtained in \(H^r\) for every \(r<-1/2\) for stochastic heat equations
with a globally Lipschitz noise coefficient driven by L\'evy noise.

\medskip
\noindent\textbf{Local weak solutions.}
Fix a deterministic \(u_0\in L^2(0,1)\).  By a local weak solution on the
stochastic interval \([0,T)\) we mean a pair \((u,T)\) with the following
properties.  Here \(T:\Omega\to(0,\infty]\) is a stopping time, and
\(u=(u(t))_{0\le t<T}\) is an adapted \(H^{-1}(0,1)\)-valued process with
c\`adl\`ag paths and \(u(0)=u_0\).  Let \(\mathscr P_{\mathcal G}\) denote the
predictable \(\sigma\)-field on \(\Omega\times[0,\infty)\) associated with
\((\mathcal G_t)_{t\ge0}\).  There exists a real-valued
\(\mathscr P_{\mathcal G}\otimes\mathcal B((0,1))\)-measurable function
\[
    u:\Omega\times[0,\infty)\times(0,1)\longrightarrow\R,
    \qquad (\omega,s,x)\longmapsto u(\omega,s,x),
\]
such that, for
\(\Pp\)-almost every \(\omega\) and Lebesgue-almost every
\(s<T(\omega)\), \(u(\omega,s,\cdot)\in L^1(0,1)\) and
\[
    \langle u(s),\psi\rangle
    =\int_0^1u(\omega,s,x)\psi(x)\,\d x
    \qquad\text{for every }\psi\in H_0^1(0,1).
\]
The pair also satisfies, almost surely,
\[
    \int_0^t\int_0^1
    \bigl(|u(s,x)|+|f(u(s,x))|\bigr)\,\d x\,\d s<\infty,
    \qquad 0\le t<T,
\]
and, for any \(\psi\in C^\infty([0,1])\) satisfying
\(\psi(0)=\psi(1)=0\), the following identity holds almost surely,
simultaneously for all \(0\le t<T\):
\begin{align*}
\langle u(t),\psi\rangle-\langle u_0,\psi\rangle
={}&\int_0^t\int_0^1u(s,x)\psi''(x)\,\d x\,\d s\\
&+\int_0^t\int_0^1 f(u(s,x))\psi(x)\,\d x\,\d s
+\sigma\int_0^t\int_0^1\psi(x)\,\Lambda(\d x,\d s).
\end{align*}

We call \(T\) the \emph{lifetime} of the given local weak solution,
namely the right endpoint of its stochastic interval of definition.
We do not establish existence or uniqueness in this class; the SPDE results
below apply to any local weak solution \((u,T)\) once such a pair is given.

Theorem~\ref{thm:b-as-explosion} gives the following consequence.
\begin{theorem}\label{cor:spde-explicit-correction}
Let \(f\) be a nonnegative convex function such that
\[
    \int^\infty\frac{\d r}{f(r)}<\infty,
\]
and assume that \(\lambda((0,\infty))>0\).  Suppose, in addition, that at
least one of the following conditions holds:
\begin{enumerate}
\renewcommand{\labelenumi}{\textup{(\roman{enumi})}}
\item
\[
    \int_{(-\infty,-1)}\log(1+|y|)\,\lambda(\d y)<\infty;
\]
\item There exists $s_0>0$ such that $f(r)>0$ for any $r\le -s_0$ and
\[
    \int_{-\infty}\frac{\d r}{f(r)}<\infty.
\]
\end{enumerate}
Then, for every deterministic \(u_0\in L^2(0,1)\), every local weak
solution \((u,T)\) in the preceding sense satisfies
\[
    \Pp(T<\infty)=1 .
\]
\end{theorem}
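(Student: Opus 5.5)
Test the weak formulation against the normalized positive first Dirichlet eigenfunction and compare the projection with a one-dimensional SDE to which Theorem~\ref{thm:b-as-explosion} applies. Take \(\phi(x)=\frac{\pi}{2}\sin(\pi x)\), normalized so that \(\int_0^1\phi\,\d x=1\), which satisfies \(\phi''=-\pi^2\phi\), and set \(Y_t:=\langle u(t),\phi\rangle\). The weak identity with \(\psi=\phi\) gives, for \(t<T\),
\[
    Y_t=Y_0-\pi^2\int_0^tY_s\,\d s+\int_0^t\int_0^1f(u(s,x))\phi(x)\,\d x\,\d s+\sigma M_t,
    \qquad
    M_t:=\int_0^t\int_0^1\phi(x)\,\Lambda(\d x,\d s).
\]
Because \(\phi\,\d x\) is a probability measure and \(f\) is convex, Jensen's inequality yields \(\int_0^1 f(u)\phi\,\d x\ge f(Y_s)\) for a.e.\ \(s\). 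The process \(\sigma M\) is a pure-jump L\'evy process whose L\'evy measure \(\nu\) is the image of \(\d x\,\lambda(\d z)\) under \((x,z)\mapsto\sigma\phi(x)z\); after adjusting the drift to the truncation convention at \(1\), this gives a constant drift \(c\). Hence \(Y_t\ge Y_0+\int_0^t(-\pi^2Y_s+f(Y_s)+c)\,\d s+L_t\) with \(L\) in the form of the paper.

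Put \(b(y):=\pi^2y-f(y)-c\), which is locally Lipschitz, and let \(X\) solve \eqref{eq:b-finite-line-equation} with \(X_0=Y_0\). Since \(\dot Y\ge -b(Y)\) in the integrated sense and both processes share the same noise, the difference \(D=Y-X\) is absolutely continuous with \(D'\ge -(b(Y)-b(X))\). A Gronwall argument on \(D^-\), localized by the exit times from \([-n,n]\), then gives \(Y_t\ge X_t\) for \(t<T\wedge(\text{explosion time of }X)\). The theorem concerns blow-up of \(u\) upward, so I reflect: set \(\tilde X:=-X\), which solves \(\d\tilde X=-\tilde b(\tilde X)\,\d t+\d\tilde L\) with \(\tilde b(y)=-b(-y)=\pi^2y+f(-y)+c\) and \(\tilde L=-L\). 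Then \(\tilde X\to-\infty\) in finite time means \(X\to+\infty\), which forces \(Y\to+\infty\). Since \(|Y_t|\le\|\phi\|_{H_0^1}\|u(t)\|_{H^{-1}}\), a c\`adl\`ag \(H^{-1}\)-valued path cannot be continued past such a time, so \(T\le\) the explosion time of \(X\). It therefore suffices to show \(\Pp(T_-<\infty)=1\) for \(\tilde X\), which is exactly Theorem~\ref{thm:b-as-explosion}.

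It remains to verify the conditions for \(\tilde b\) and \(\tilde\nu\), where \(\tilde\nu\) is the reflection of \(\nu\).

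Condition~\ref{cond:b-as-coeff}(ii): for \(y\to-\infty\), \(\tilde b(y)=f(-y)+\pi^2y+c\). Convexity and the Osgood integral force \(f\) to grow superlinearly at \(+\infty\), so \(\tilde b(y)\ge\frac12 f(-y)\) for \(y\) near \(-\infty\). I take \(g_-\) to be a nonincreasing minorant, for instance \(\frac12\inf_{r\ge -y}f(r)\), which is comparable to \(f(-y)\) because a superlinear convex \(f\) is eventually increasing.

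Condition~\ref{cond:b-as-coeff}(i): \(\lambda((0,\infty))>0\) gives \(\nu((0,\infty))>0\), hence \(\tilde\nu((-\infty,0))>0\).

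Condition~\ref{cond:b-right-return} is the main obstacle: starting from large \(y>0\), we must show \(\tilde X\) a.s.\ returns below a level \(R\). Here \(\tilde b(y)\ge\pi^2y+c\), with \(f\ge0\), so there is a linear restoring drift.

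Under (i), the log-moment of the left tail of \(\lambda\) becomes a log-moment of the right (upward) tail of \(\tilde\nu\). I would use Lemma~\ref{lem:b-right-lyapunov-return} with Lyapunov function \(V(y)=\log(1+y)\) for large \(y\): the generator gives \(-\tilde b(y)V'(y)\approx-\pi^2\), while the jump contribution is bounded by \(\int\log(1+|z|)\,\tilde\nu(\d z)\)-type terms that vanish or stay small. This needs the log-moment to be integrable after the \(\phi\)-transformation, which holds because \(\log(1+\sigma\phi z)\le\log(1+\sigma z)+C\). The drift is then negative far out, which gives return.

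Under (ii), \(\tilde b(y)\ge f(-y)\cdot\tfrac12\) also holds as \(y\to+\infty\), by the Osgood integral at \(-\infty\) together with convexity, since \(f\) is then superlinear at \(-\infty\). Hence Condition~\ref{cond:b-lyapunov} holds for \(\tilde b\), and Proposition~\ref{cor:b-right-return-from-osgood} supplies Condition~\ref{cond:b-right-return}.

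I expect the most delicate points to be the precise Lyapunov computation in case (i), including handling the compensated small jumps, and making the comparison rigorous when \(Y\) has jumps. The comparison is simpler than it looks because the jumps of \(X\) and \(Y\) cancel exactly in \(D\).
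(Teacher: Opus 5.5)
Your proposal is correct and follows the paper's proof step for step. The paper likewise projects onto $\varphi=\frac{\pi}{2}\sin(\pi x)$, uses Jensen and a Gronwall argument on the negative part of the difference, and reflects to the SDE with drift $\pi^2x+f(-x)+\gamma_{\sigma,\varphi}$ so that Theorem~\ref{thm:b-as-explosion} applies; it then uses a logarithmic Lyapunov function with Lemma~\ref{lem:b-right-lyapunov-return} in case (i), and Condition~\ref{cond:b-lyapunov} with Proposition~\ref{cor:b-right-return-from-osgood} in case (ii). The details you leave implicit are routine: finiteness of the drift correction (which the paper checks directly, and which also follows from the L\'evy--It\^o decomposition), and that Condition~\ref{cond:b-right-return} rules out explosion of the reflected process to $+\infty$, so the comparison process has $\sup z=+\infty$ at its lifetime.
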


\bigskip

The remainder of the paper is organized as follows.
Section~\ref{sec:b-lyapunov-input} develops Lyapunov criteria for the right-tail
return condition and verifies that condition under the stronger tail
assumptions, while Section~\ref{sec:b-compact-input} establishes a uniform
fixed-time lower bound on the probability of explosion below any prescribed
level.
Section~\ref{sec:b-renewal} combines these ingredients through stopping-time
renewal arguments to prove Theorems~\ref{thm:b-as-explosion}
and~\ref{thm:b-main}; the final section proves
Theorem~\ref{cor:spde-explicit-correction} by projecting the SPDE onto the first
Dirichlet eigenfunction and comparing the projection with a one-dimensional
L\'evy-driven SDE.

\section{Concerning on Conditions}\label{sec:b-lyapunov-input}

\subsection{Condition~$\ref{cond:b-right-return}$}
\begin{lemma}\label{lem:b-no-right-explosion-from-return}
Assume that Condition~$\ref{cond:b-right-return}$ holds, or suppose that there exists
\(a\in\R\) such that
$b(y)\ge0$ for all $y\ge a .$
Then,
\[
    \Pp(T_+^x=\infty)=1,\quad x\in\R .
\]
\end{lemma}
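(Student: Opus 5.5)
The two hypotheses call for different arguments, so I treat them separately.

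\emph{Case 1: Condition~\ref{cond:b-right-return} holds.} Fix \(x\in\R\) and suppose, for contradiction, that \(\Pp(T_+^x<\infty)>0\). The right explosion time is \(T_+^x=\lim_n\tau^x_{n,+}\), and on \(\{T_+^x<\infty\}\) the path tends to \(+\infty\) at \(T_+^x\). Since \(X^x\) is c\`adl\`ag on \([0,T_-^x\wedge T_+^x)\), the path is bounded below on \([0,T_+^x)\) in that event. More precisely, for every \(m\) the set \(\{X^x_t\le R\}\cap[\tau^x_{m,+},T_+^x)\) must be empty once \(m\) is large, i.e. the process eventually stays above any level.

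Choose \(m>R\) and apply the strong Markov property at \(\sigma_m:=\tau^x_{m,+}\) on \(\{\sigma_m<\infty\}\). The post-\(\sigma_m\) process starts at \(X^x_{\sigma_m}\ge m>R\). By Condition~\ref{cond:b-right-return} it reaches \((-\infty,R]\) almost surely (absorption at \(+\infty\) counts as never reaching), so it does not explode to \(+\infty\) before returning below \(R\).

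Iterating this gives infinitely many alternations between \(\{\ge m\}\) and \(\{\le R\}\) before \(T_+^x\). These alternations are incompatible with \(X^x_t\to+\infty\) at a finite \(T_+^x\), because that would require oscillation in a left neighbourhood of \(T_+^x\). A limit \(+\infty\) is not possible while infinitely many visits to \((-\infty,R]\) accumulate at \(T_+^x\). Hence \(\Pp(T_+^x<\infty)=0\).

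The delicate point is making ``after \(\sigma_m\), \(X\) returns below \(R\) before \(T_+\)'' rigorous for every \(m\) simultaneously. I would do this by countable intersection over \(m\in\mathbb N\): almost surely, for all \(m\), on \(\{\sigma_m<T_+^x\}\) there exists \(t\in(\sigma_m,T_+^x)\) with \(X^x_t\le R\). Consequently \(\liminf_{t\uparrow T_+^x}X^x_t\le R\), which contradicts \(X^x_{t}\to\infty\) as \(t\uparrow T_+^x\).

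\emph{Case 2: \(b\ge0\) on \([a,\infty)\).} Use a pathwise comparison with the driving noise. Fix \(x\), set \(c:=\max(a,x)\), and let \(t<T_-^x\wedge T_+^x\). Let \(s_0:=\sup\{s\le t: X^x_s<c\}\), with \(s_0=0\) if this set is empty. On \((s_0,t]\) we have \(X^x\ge c\ge a\), so \(b(X^x)\ge0\) there. Therefore
\[
X^x_t=X^x_{s_0}-\int_{s_0}^t b(X^x_s)\,\dd s+L_t-L_{s_0}\le X^x_{s_0}+L_t-L_{s_0}.
\]

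It remains to bound \(X^x_{s_0}\). If \(s_0=0\) then \(X^x_{s_0}=x\le c\). Otherwise, either \(X^x_{s_0-}\le c\) and \(X^x_{s_0}=X^x_{s_0-}+\Delta L_{s_0}\le c+|\Delta L_{s_0}|\), or \(X^x_{s_0}\le c\) directly. In all cases
\[
X^x_t\le c+2\sup_{r\le t}|L_r|+\sup_{r\le t}|\Delta L_r|\le c+4\sup_{r\le t}|L_r|.
\]
The right-hand side is finite for every finite \(t\) because \(L\) is c\`adl\`ag. So \(\sup_{s<T_+^x\wedge t}X^x_s<\infty\), which prevents \(\tau^x_{n,+}\uparrow T_+^x\le t\). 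Hence \(T_+^x=\infty\) almost surely.

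I expect Case 1 to be the main obstacle, namely the measurability and strong-Markov bookkeeping at the hitting times.
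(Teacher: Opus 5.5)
Your proof is correct and follows essentially the same two-case strategy as the paper. In the first case you restart at the upward hitting times \(\tau^x_{m,+}\) via the strong Markov property. The paper instead applies the simple Markov property at rational times \(q\) to the events that \(X^x\) stays above \(R\) for all \(t\ge q\), which spares it the check that \(\tau^x_{m,+}<T_+^x\). In the second case your last-exit decomposition at \(s_0\) replaces the paper's truncation argument for the absolutely continuous process \(X^x-L\), and it gives the same kind of pathwise bound, \(\max\{a,x\}\) plus a multiple of \(\sup_{r\le t}|L_r|\).
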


\begin{proof}
\textup{(i)} Suppose that Condition~\ref{cond:b-right-return} holds, and let
\(R\) be the level appearing there.  Fix \(x\in\R\).  For each positive
rational number \(q\), set
\[
    A_q
    :=
    \left\{
        q<T_-^x\wedge T_+^x,\quad X_q^x>R,\quad
        X_t^x>R\ \text{for every }t\ge q
    \right\}.
\]
Since \(q\) is deterministic, the Markov property at time \(q\) and
Condition~\ref{cond:b-right-return} give
\[
\begin{aligned}
    \Pp(A_q)
    &=
    \E\!\left[
        \mathbf 1_{\{q<T_-^x\wedge T_+^x,\,X_q^x>R\}}
        \Pp\bigl(\tau_{R,-}^{X_q^x}=\infty\bigr)
    \right]
    =0 .
\end{aligned}
\]
On \(\{T_+^x<\infty\}\), one has
\(X_t^x\to+\infty\) as \(t\uparrow T_+^x\).  Since \(+\infty\) is
absorbing, for any fixed \(\omega\in\{T_+^x<\infty\}\) there exists
\(s(\omega)\in\mathbb Q\), \(0<s(\omega)<T_+^x(\omega)\), such that
\(X_t^x(\omega)>R\) for every \(t\ge s(\omega)\).  Hence
\(\omega\in A_{s(\omega)}\), and consequently
\[
    \{T_+^x<\infty\}
    \subseteq
    \bigcup_{\substack{q\in\mathbb Q\\ q>0}}A_q,
\]
and the right-hand side has probability zero.

\medskip
\noindent
\textup{(ii)} Suppose that \(b(y)\ge0\) for \(y\ge a\).  Fix \(x\in\R\), a
sample path of \(L\), and \(t_0<\infty\).  Since \(L\) is bounded on
\([0,t_0]\), set
\[
    C:=\sup_{0\le s\le t_0}|L_s|,
    \quad
    M:=\max\{x,a+C\}.
\]
For \(s<T_-^x\wedge T_+^x\), define \(Y_s^x:=X_s^x-L_s\).  By
\eqref{eq:b-finite-line-equation},
\[
    Y_s^x
    =
    x-\int_0^s b(Y_u^x+L_u)\,\dd u .
\]
Thus, \(s\mapsto Y_s^x\) is absolutely continuous, and
\(\dot Y_s^x=-b(Y_s^x+L_s)\) almost everywhere.  If \(0\le s\le t_0\) and
\(Y_s^x>M\), then
\[
     Y_s^x+L_s>M-C\ge a,
\]
and consequently \(\dot Y_s^x\le0\).  Since \(Y_0^x=x\le M\), for
\(0\le r<t_0\wedge T_-^x\wedge T_+^x\),
\[
\begin{aligned}
    (Y_r^x-M)^+
    &=
    \int_0^r
    \mathbf 1_{\{Y_s^x>M\}}\dot Y_s^x\,\dd s
    =
    -\int_0^r
    \mathbf 1_{\{Y_s^x>M\}}b(Y_s^x+L_s)\,\dd s
    \le0 .
\end{aligned}
\]
Since \((Y_r^x-M)^+\ge0\), the last display gives
\((Y_r^x-M)^+=0\).  Thus \(Y_r^x>M\) would be a contradiction, and hence
\(Y_r^x\le M\).  Therefore
\[
    X_r^x=Y_r^x+L_r\le M+C,
    \quad 0\le r<t_0\wedge T_-^x\wedge T_+^x .
\]
This rules out the right explosion before \(t_0\).  Since \(t_0<\infty\) was
arbitrary, \(T_+^x=\infty\).
\end{proof}

\begin{lemma}[Exit from a bounded interval]
\label{lem:b-no-bounded-trapping}
For every \(R<n\) and \(x\in(R,n)\),
\[
    \Pp\bigl(\tau_{R,-}^x\wedge\tau_{n,+}^x<\infty\bigr)=1 .
\]
\end{lemma}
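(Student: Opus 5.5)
We need to show that the process cannot stay forever in the bounded interval $(R,n)$. The plan is a uniform positive exit probability over a fixed time window, iterated with the Markov property. Set $\eta^x:=\tau_{R,-}^x\wedge\tau_{n,+}^x$. While $X^x$ stays in $(R,n)$, the drift is bounded: $|b|\le K:=\sup_{[R,n]}|b|<\infty$, because $b$ is continuous. Before $\eta^x$ we have the pathwise bound
\[
X_t^x = x-\int_0^t b(X_s^x)\,\dd s+L_t \in \bigl[x-Kt+L_t,\;x+Kt+L_t\bigr].
\]
So it suffices to find $t_0>0$ and $p>0$, independent of $x\in(R,n)$, such that with probability at least $p$ the L\'evy path forces $x\pm Kt+L_t$ out of $(R,n)$ by time $t_0$.

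Since $\nu\neq0$, we may split into cases. There is either a one-signed jump set of positive measure, say $\nu([\delta,\infty))>0$ for some $\delta>0$ (the case $\nu((-\infty,-\delta])>0$ is symmetric), or $\nu$ is supported on arbitrarily small jumps of one sign. The cleanest route, avoiding a case split, is to use the long-time behaviour of $L$. Any nonzero L\'evy process satisfies $\limsup_{t\to\infty}|L_t-ct|=\infty$ almost surely for every constant $c$; more precisely, $L_t-ct$ is a nondegenerate L\'evy process, and such a process cannot remain bounded. Fluctuations make this clean: a nonzero L\'evy process either drifts to $\pm\infty$ or oscillates, so $\sup_t|L_t+ct|=\infty$ almost surely.

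I would make this uniform as follows. Let $\ell:=n-R$. For a fixed time $t_0$, consider the event
\[
E_{t_0}:=\Bigl\{\sup_{0\le t\le t_0}\bigl(L_t-Kt\bigr)\ge \ell\Bigr\}\cup\Bigl\{\inf_{0\le t\le t_0}\bigl(L_t+Kt\bigr)\le -\ell\Bigr\}.
\]
On $E_{t_0}\cap\{\eta^x>t_0\}$, the bound above pushes $X^x$ to $\ge x+\ell>n$ or to $\le x-\ell<R$ at some $t\le t_0$. This is a contradiction, so $\Pp(\eta^x\le t_0)\ge\Pp(E_{t_0})=:p$, uniformly in $x$. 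We need $p>0$ for some $t_0$.

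The main obstacle is showing $\Pp(E_{t_0})>0$. If $\Pp(E_{t_0})=0$ for all $t_0$, then almost surely $L_t-Kt<\ell$ and $L_t+Kt>-\ell$ for all $t$. This gives $|L_t|\le \ell+Kt$, which is not immediately contradictory. So I would instead argue directly with jumps. First take the case where $\nu$ charges $\{|z|>\varepsilon\}$ for some $\varepsilon>0$, say it charges $[\varepsilon,\infty)$. Take $t_0$ small and require about $\lceil 2\ell/\varepsilon\rceil$ jumps of size $\ge\varepsilon$ in $[0,t_0]$, together with the remaining part of $L$ staying within $\varepsilon/4$ on $[0,t_0]$. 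These two events are independent, each has positive probability, and with $Kt_0<\varepsilon/4$ they give $\sup_{t\le t_0}(L_t-Kt)\ge\ell$. If $\nu$ has no mass away from zero, then $\nu=0$, which is excluded. So this case always applies, to one of the two signs.

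Finally, iterate. Apply the Markov property at times $kt_0$ on $\{\eta^x>kt_0\}$, where $X^x_{kt_0}\in(R,n)$. This gives $\Pp(\eta^x>(k+1)t_0)\le(1-p)\Pp(\eta^x>kt_0)$, hence $\Pp(\eta^x>kt_0)\le(1-p)^k\to0$, which proves the lemma. Explosion before $\eta^x$ is impossible, since the process is bounded on $[0,\eta^x)$.
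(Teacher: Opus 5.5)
Your argument is correct and follows essentially the same route as the paper. Both prove a uniform-in-$x$ lower bound on the probability of leaving $(R,n)$ within a fixed time, and then iterate with the Markov property to get a geometric tail. The bound comes from forcing many jumps of one sign from an independent compound Poisson component, while the drift (bounded by $\sup_{[R,n]}|b|$) and the remainder of $L$ stay controlled. The differences are cosmetic: you shrink the time window so that the drift and the remainder stay below $\varepsilon/4$, whereas the paper fixes the window at $1$ and takes the number of jumps large. Your exploratory paragraph on the long-time behaviour of $L$, and the vacuous ``arbitrarily small jumps'' alternative, can simply be deleted.
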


\begin{proof}
Fix \(R<n\).  It is enough to prove that, for some \(p>0\),
\begin{equation}\label{eq:b-bounded-exit-unit}
    \inf_{R<x<n}
    \Pp\bigl(\tau_{R,-}^x\wedge\tau_{n,+}^x\le1\bigr)\ge p.
\end{equation} Indeed, applying the Markov property
inductively
and using
\eqref{eq:b-bounded-exit-unit} give
\[
    \sup_{R<x<n}
    \Pp\bigl(\tau_{R,-}^x\wedge\tau_{n,+}^x>k\bigr)
    \le(1-p)^k .
\]
Letting \(k\to\infty\) proves the desired claim.

By the non-zero property of the L\'evy measure $\nu$, there exist \(\varepsilon\in\{-1,1\}\) and
\(0<\ell<r<\infty\) such that \(\nu(I_\varepsilon)>0\), where
\[
    I_1=[\ell,r],\quad I_{-1}=[-r,-\ell].
\]
For any $t\ge0$, set
\[
    Z_t
    :=
    \int_0^t\int_{I_\varepsilon}z\,\mathcal N(\dd s,\dd z)
\]
and
\[
    Y_t
    :=
    \int_0^t\int_{\{|z|\le1\}\setminus I_\varepsilon}
        z\,\widetilde{\mathcal N}(\dd s,\dd z)
    +
    \int_0^t\int_{\{|z|>1\}\setminus I_\varepsilon}
        z\,\mathcal N(\dd s,\dd z)
    -
    t\int_{\{|z|\le1\}\cap I_\varepsilon}z\,\nu(\dd z).
\]
Write \(Y:=(Y_t)_{t\ge0}\) and \(Z:=(Z_t)_{t\ge0}\).  Then
\(L_t=Y_t+Z_t\) for every \(t\ge0\), and \(Y\) and \(Z\) are independent.
Since \(Y\) has
c\`adl\`ag paths,
choose \(B<\infty\) such that
\[
    \Pp\left(\sup_{0\le t\le1}|Y_t|\le B\right)>0 .
\]
Let \(M:=\sup_{y\in[R,n]}|b(y)|<\infty\), and choose \(m\) so large that
\begin{equation}\label{eq:b-bounded-exit-m-choice}
    m\ell>n-R+M+B .
\end{equation}
Let \(S_m\) be the time of the \(m\)-th jump of \(Z\).  The event
\[
    G:=\left\{S_m\le1,\ \sup_{0\le t\le1}|Y_t|\le B\right\}
\]
has positive probability.  On \(G\), if
\(S_m<\tau_{R,-}^x\wedge\tau_{n,+}^x\), then \(X_u^x\in(R,n)\) for
\(0\le u<S_m\).  Using \eqref{eq:b-finite-line-equation}
and \eqref{eq:b-bounded-exit-m-choice},
if
\(\varepsilon=1\), then
\[
    X_{S_m}^x
    =
    x-\int_0^{S_m}b(X_u^x)\,\dd u+Y_{S_m}+Z_{S_m}
    \ge R-MS_m-B+m\ell
    \ge R-M-B+m\ell
    >n;
\]
whereas, if \(\varepsilon=-1\), then
\[
    X_{S_m}^x
    =
    x-\int_0^{S_m}b(X_u^x)\,\dd u+Y_{S_m}+Z_{S_m}
    \le n+MS_m+B-m\ell
    \le n+M+B-m\ell
    <R.
\]
The strict inequalities in both displays follow from
\eqref{eq:b-bounded-exit-m-choice}, and both alternatives contradict
\(S_m<\tau_{R,-}^x\wedge\tau_{n,+}^x\).  Therefore
\eqref{eq:b-bounded-exit-unit} holds with \(p:=\Pp(G)>0\).
The proof is complete. \end{proof}

For a \(C^2\)-function \(f\), write formally
\[
    \mathcal I f(x)
    =
    \int_{\R\setminus\{0\}}
    \left(
        f(x+z)-f(x)-z f'(x)\mathbf 1_{\{|z|\le1\}}
    \right)\nu(\dd z).
\]
Define
{\small
\[
\mathcal D:=\biggl\{F\in C^2(\R):
\sup_{x\in K}\int_{\R\setminus\{0\}}
\left|F(x+z)-F(x)-zF'(x)\mathbf 1_{\{|z|\le1\}}\right|
\nu(\dd z)<\infty \text{ for every compact }K\subset\R\biggr\}.
\]
}
For \(F\in\mathcal D\), define
\[
    \mathcal L F(x):=-b(x)F'(x)+\mathcal I F(x),
    \quad x\in\R.
\]

\begin{lemma}[The Lyapunov condition for right-tail return]
\label{lem:b-right-lyapunov-return}
Suppose that there exist \(R\in\R\) and
a nonnegative function \(V\in\mathcal D\)
such that
$
    \lim_{x\to+\infty}V(x)=\infty$ and $\mathcal L V(x)\le0$ for all $x>R$.
Then Condition~$\ref{cond:b-right-return}$ holds.
\end{lemma}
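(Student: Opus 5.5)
We argue by contradiction using a supermartingale built from \(V\), combined with Lemma~\ref{lem:b-no-bounded-trapping} to rule out the process staying in a bounded region.

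Fix \(x>R\) and levels \(n>x\). Set
\[
    \eta_n:=\tau_{R,-}^x\wedge\tau_{n,+}^x .
\]
On \([0,\eta_n)\) the process stays in \((R,n)\), so the drift is bounded there. Since \(V\in\mathcal D\), the integral term \(\mathcal I V\) is bounded on the compact \([R,n]\). Applying It\^o's formula for jump semimartingales to \(V(X^x_{t\wedge\eta_n})\) gives
\[
    V(X^x_{t\wedge\eta_n})
    = V(x)+\int_0^{t\wedge\eta_n}\mathcal L V(X^x_{s})\,\dd s+M_{t\wedge\eta_n},
\]
where \(M\) is a local martingale built from the compensated Poisson integrals
\[
    \int\!\!\int \bigl(V(X_{s-}+z)-V(X_{s-})\bigr)\,\widetilde{\mathcal N}(\dd s,\dd z).
\]
This needs care because the large jumps are not compensated in \(L\). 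One rewrites
\[
    \int_{|z|>1}\bigl(V(X_{s-}+z)-V(X_{s-})\bigr)\,\mathcal N(\dd s,\dd z)
\]
as the sum of its compensated version and its compensator. The compensator is finite because \(V\in\mathcal D\) and \(X_{s-}\in[R,n]\) for \(s\le\eta_n\).

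Next we show the stopped local martingale is a true martingale, or at least that \(V(X^x_{t\wedge\eta_n})\) is a nonnegative supermartingale. Since \(\mathcal L V\le0\) on \((R,n)\), the process \(V(X_{t\wedge\eta_n})\) equals \(V(x)\) plus a local martingale plus a nonincreasing term, and it is nonnegative. A nonnegative local supermartingale is a supermartingale by Fatou's lemma. This gives
\[
    \E V(X^x_{t\wedge\eta_n})\le V(x).
\]
I expect this to be the main technical obstacle: controlling the overshoot jump at \(\eta_n\), which may land far from \([R,n]\), and verifying that \(\int |V(y+z)-V(y)|\,\nu(\dd z)\) is locally bounded from the \(\mathcal D\) assumption. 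The Fatou route avoids needing integrability of the overshoot, because only nonnegativity is used.

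Now let \(t\to\infty\). By Lemma~\ref{lem:b-no-bounded-trapping}, \(\eta_n<\infty\) almost surely. Using Fatou's lemma again,
\[
    \E V(X^x_{\eta_n})\le V(x).
\]
On \(\{\tau_{n,+}^x<\tau_{R,-}^x\}\) we have \(X^x_{\eta_n}\ge n\), so \(V(X^x_{\eta_n})\ge \inf_{y\ge n}V(y)=:v_n\). Hence
\[
    \Pp(\tau_{n,+}^x<\tau_{R,-}^x)\le V(x)/v_n ,
\]
and \(v_n\to\infty\) because \(V(y)\to\infty\) as \(y\to+\infty\).

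Finally, on \(\{\tau_{R,-}^x=\infty\}\) we have \(\eta_n=\tau_{n,+}^x<\infty\) for every \(n\). Therefore
\[
    \Pp(\tau_{R,-}^x=\infty)\le\Pp(\tau_{n,+}^x<\tau_{R,-}^x)\le V(x)/v_n\to0 .
\]
This holds whether the exit through \(n\) happens by right explosion or not, since \(\tau_{n,+}^x\le T_+^x\). Thus \(\Pp(\tau_{R,-}^x<\infty)=1\) for every \(x>R\), which is Condition~\ref{cond:b-right-return}.
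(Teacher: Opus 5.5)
Your proposal is correct and follows essentially the same route as the paper: the stopped process \(V(X^x_{t\wedge\tau_{R,-}^x\wedge\tau_{n,+}^x})\) is a nonnegative local supermartingale, hence a supermartingale, which gives \(\Pp(\tau_{n,+}^x<\tau_{R,-}^x)\le V(x)/\inf_{y\ge n}V(y)\), and Lemma~\ref{lem:b-no-bounded-trapping} then forces \(\Pp(\tau_{R,-}^x=\infty)=0\). The only differences are cosmetic. You let \(t\to\infty\) via Fatou to get \(\E V(X^x_{\eta_n})\le V(x)\), whereas the paper bounds \(\Pp(\tau_{n,+}^x<\tau_{R,-}^x,\ \tau_{n,+}^x\le t)\) at fixed \(t\) and then lets \(t\to\infty\). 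Also, \(V\in\mathcal D\) only guarantees local boundedness of \(\int|V(y+z)-V(y)|\,\nu(\dd z)\) over \(\{|z|>1\}\), not over all of \(\R\setminus\{0\}\), but that restricted bound is all you actually need to compensate the large jumps.
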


\begin{proof}
Fix \(x>R\) and \(n>x\).  Since \(V\in\mathcal D\), It\^o's formula,
localized in bounded intervals and stopped at
\(t\wedge\tau_{R,-}^x\wedge\tau_{n,+}^x\),
shows
that
\(V(X_{t\wedge\tau_{R,-}^x\wedge\tau_{n,+}^x}^x)\) is a nonnegative local
supermartingale.  It is therefore a supermartingale, and
\[
    \E V(X_{t\wedge\tau_{R,-}^x\wedge\tau_{n,+}^x}^x)
    \le V(x),\quad t\ge0 .
\]
On \(\{\tau_{n,+}^x<\tau_{R,-}^x,\ \tau_{n,+}^x\le t\}\), we have
\(X_{t\wedge\tau_{R,-}^x\wedge\tau_{n,+}^x}^x\ge n\).  Hence
\[
    \left(\inf_{y\ge n}V(y)\right)\cdot
    \Pp(\tau_{n,+}^x<\tau_{R,-}^x,\ \tau_{n,+}^x\le t)
    \le V(x).
\]
Letting \(t\to\infty\), we get
\begin{equation}
\label{eq:b-up-before-return}
    \Pp(\tau_{n,+}^x<\tau_{R,-}^x)
    \le\frac{V(x)}{\inf_{y\ge n}V(y)}
    \xrightarrow[n\to\infty]{}0 .
\end{equation}

By Lemma~\ref{lem:b-no-bounded-trapping},
\(\tau_{R,-}^x\wedge\tau_{n,+}^x<\infty\) almost surely.  Hence, on
\(\{\tau_{R,-}^x=\infty\}\), one necessarily has
\(\tau_{n,+}^x<\tau_{R,-}^x\).  Together with
\eqref{eq:b-up-before-return}, this gives
\[
    \Pp(\tau_{R,-}^x=\infty)
    \le
    \Pp(\tau_{n,+}^x<\tau_{R,-}^x)
    \xrightarrow[n\to\infty]{}0 .
\]
Thus \(\Pp(\tau_{R,-}^x<\infty)=1\) for every \(x>R\), which is exactly
Condition~\ref{cond:b-right-return}.
\end{proof}

\subsection{Conditions~$\ref{cond:b-as-coeff}(ii)$ and $\ref{cond:b-lyapunov}$}
\begin{lemma}\label{lem:b-smooth-tail-functions}
Assume that Conditions~$\ref{cond:b-as-coeff}(ii)$ and $\ref{cond:b-lyapunov}$ hold.  Then
there exist nonnegative
smooth functions \(q_-,q_+:\R\to[0,\infty)\), with
bounded derivatives, such that
\[
    q_-(x)\ge\frac1{g_-(x)},\,\,x\le a_-;
    \quad
    q_+(x)\ge\frac1{g_+(x)},\,\, x\ge a_+,
\]
\[
    \int_{-\infty}^{a_-}q_-(x)\,\dd x<\infty,
    \quad
    \int_{a_+}^\infty q_+(x)\,\dd x<\infty,
\]
and
\[
    q_-(x)\to0,\,\, x\to-\infty;
    \quad
    q_+(x)\to0,\,\,x\to+\infty.
\]
\end{lemma}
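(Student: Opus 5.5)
Since $g_-$ is nonincreasing on $(-\infty,a_-]$ and positive, $1/g_-$ is nondecreasing there, with $1/g_-(x)\le 1/g_-(a_-)<\infty$. Integrability of a monotone function on $(-\infty,a_-]$ forces $1/g_-(x)\to0$ as $x\to-\infty$. Symmetrically, $1/g_+$ is nonincreasing on $[a_+,\infty)$, bounded by $1/g_+(a_+)$, integrable, and tends to $0$ at $+\infty$. So the task is to dominate a bounded monotone integrable function by a smooth, integrable function with bounded derivative that still tends to $0$. We treat $q_+$; $q_-$ follows by the reflection $x\mapsto -x$.

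Set $h(x):=1/g_+(x)$ for $x\ge a_+$ and $h(x):=1/g_+(a_+)$ for $x<a_+$. Then $h$ is nonincreasing and bounded on $\R$. Define the shifted step majorant
\[
\bar h(x):=h(x-1),
\]
which is nonincreasing with $\bar h\ge h$. Next mollify: take a smooth probability density $\rho$ supported in $[-1,0]$ and set
\[
q_+(x):=\int_\R \bar h(x-y)\,\rho(y)\,\dd y .
\]
Since $\bar h$ is nonincreasing and $x-y\le x+1$ for $y\in[-1,0]$,
\[
q_+(x)\ge \bar h(x+1)=h(x)\ge 1/g_+(x)\quad\text{for }x\ge a_+ .
\]
Smoothness is clear, and the derivative is bounded because
\[
q_+'(x)=\int \bar h(z)\rho'(x-z)\,\dd z,\qquad |q_+'(x)|\le \|\bar h\|_\infty\|\rho'\|_{L^1}.
\]
Integrability follows from $q_+(x)\le \bar h(x)=h(x-1)$, which holds because $x-y\ge x$. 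Hence
\[
\int_{a_+}^\infty q_+\le \int_{a_+-1}^\infty h<\infty,
\]
using that $h$ is bounded on the finite piece $[a_+-1,a_+]$. The same bound gives $q_+(x)\to0$ as $x\to\infty$. Nonnegativity is immediate.

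For $q_-$ apply the same construction to $\tilde h(x):=1/g_-(-x)$, extended by a constant to the left of $-a_-$, and set $q_-(x):=\tilde q(-x)$. The only delicate point is the direction of the shift and of the mollifier support, which must be chosen to exploit monotonicity for the lower bound. Everything else is routine.
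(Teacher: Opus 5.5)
Your proposal is correct and is essentially the paper's construction: the shift by $1$ combined with a mollifier supported in $[-1,0]$ is exactly the paper's one-sided average $q_+(x)=\int_0^1\rho(s)/g_+(\max\{x-s,a_+\})\,\dd s$. As in the paper, the lower bound comes from the monotonicity of the constant-extended $1/g_+$, and $q_-$ is handled by reflection. The only cosmetic difference is that you get integrability and decay from the pointwise bound $q_+(x)\le h(x-1)$, whereas the paper uses Fubini and the monotonicity of $q_\pm$. Also, the same estimate $\|h\|_\infty\|\rho^{(k)}\|_{L^1}$ bounds all higher derivatives, not just the first.
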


\begin{proof}
Choose a nonnegative
\(\rho\in C_c^\infty((0,1))\) such that
\(\int_0^1\rho(s)\,\dd s=1\), and define
\[
    q_+(x)
    :=
    \int_0^1
    \frac{\rho(s)}
         {g_+\bigl(\max\{x-s,a_+\}\bigr)}
    \,\dd s,
    \quad x\in\R ,
\]
and
\[
    q_-(x)
    :=
    \int_0^1
    \frac{\rho(s)}
         {g_-\bigl(\min\{x+s,a_-\}\bigr)}
    \,\dd s,
    \quad x\in\R .
\]
Extend \(\rho\) by zero outside \((0,1)\).  After a change of variables,
\[
    q_+(x)
    =
    \int_{\R}
    \frac{\rho(x-u)}
         {g_+\bigl(\max\{u,a_+\}\bigr)}
    \,\dd u,
    \quad
    q_-(x)
    =
    \int_{\R}
    \frac{\rho(u-x)}
         {g_-\bigl(\min\{u,a_-\}\bigr)}
    \,\dd u .
\]
Hence, for every \(k\ge0\),
\[
    q_+^{(k)}(x)
    =
    \int_{\R}
    \frac{\rho^{(k)}(x-u)}
         {g_+\bigl(\max\{u,a_+\}\bigr)}
    \,\dd u,
    \quad
    q_-^{(k)}(x)
    =
    (-1)^k
    \int_{\R}
    \frac{\rho^{(k)}(u-x)}
         {g_-\bigl(\min\{u,a_-\}\bigr)}
    \,\dd u .
\]
Since \(g_+\) and \(g_-\) are positive and have the stated monotonicity,
\[
    \left\|q_+^{(k)}\right\|_\infty
    \le
    \frac{\left\|\rho^{(k)}\right\|_{L^1}}{g_+(a_+)},
    \quad
    \left\|q_-^{(k)}\right\|_\infty
    \le
    \frac{\left\|\rho^{(k)}\right\|_{L^1}}{g_-(a_-)} .
\]
Thus \(q_-\) and \(q_+\) are smooth with bounded derivatives.  If
\(x\ge a_+\), then
\(\max\{x-s,a_+\}\le x\); since \(g_+\) is nondecreasing,
\[
    q_+(x)
    =
    \int_0^1
    \frac{\rho(s)}
         {g_+\bigl(\max\{x-s,a_+\}\bigr)}
    \,\dd s
    \ge
    \int_0^1\frac{\rho(s)}{g_+(x)}\,\dd s
    =
    \frac1{g_+(x)} .
\]
Similarly, if \(x\le a_-\), then \(\min\{x+s,a_-\}\ge x\); since \(g_-\)
is nonincreasing,
\[
    q_-(x)
    =
    \int_0^1
    \frac{\rho(s)}
         {g_-\bigl(\min\{x+s,a_-\}\bigr)}
    \,\dd s
    \ge
    \int_0^1\frac{\rho(s)}{g_-(x)}\,\dd s
    =
    \frac1{g_-(x)} .
\]
Moreover,
Fubini's theorem implies
\[
    \int_{a_+}^\infty q_+(x)\,\dd x =\int_{a_+}^{a_++1} q_+(x)\,\dd x+ \int_{a_++1}^\infty q_+(x)\,\dd x
    \le
    \frac1{g_+(a_+)}
    +
    \int_{a_+}^\infty\frac{\dd u}{g_+(u)}
    <\infty ,
\]
and
\[
    \int_{-\infty}^{a_-}q_-(x)\,\dd x
    \le
    \frac1{g_-(a_-)}
    +
    \int_{-\infty}^{a_-}\frac{\dd u}{g_-(u)}
    <\infty .
\]
The functions \(q_-\) and \(q_+\) are respectively nondecreasing and
nonincreasing.  Their integrability therefore gives
\[
    q_-(x)\to0\quad x\to-\infty;
    \quad
    q_+(x)\to0\quad x\to+\infty.
\] The proof is complete.
\end{proof}

\begin{lemma}[Construction of \(\Phi\)]
\label{lem:b-tail-scale}
Assume that Conditions~$\ref{cond:b-as-coeff}(ii)$ and $\ref{cond:b-lyapunov}$ hold.  Then
there exists a nonnegative
function \(\Phi\in C_b^2(\R)\cap\mathcal D\) such
that \(\Phi'\) is bounded and Lipschitz,
\[
    \Phi'(x)\to0,\quad |x|\to\infty,
\]
and there exists \(R>0\) such that
\[
    b(x)\Phi'(x)\ge1,\quad |x|\ge R .
\]
Moreover,
\[
    \mathcal I\Phi(x)\longrightarrow0,
    \quad |x|\to\infty .
\]
\end{lemma}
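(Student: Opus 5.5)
The plan is to build \(\Phi\) as an antiderivative of a smooth weight that agrees with the functions of Lemma~\ref{lem:b-smooth-tail-functions} in the two tails. Fix a smooth partition \(\chi_-+\chi_0+\chi_+\equiv1\) with \(\chi_-\) supported in \((-\infty,a_--1]\) and equal to \(1\) on \((-\infty,a_--2]\), \(\chi_+\) supported in \([a_++1,\infty)\) and equal to \(1\) on \([a_++2,\infty)\), and \(\chi_0\) compactly supported. Define
\[
    \phi(x):=-\chi_-(x)q_-(x)+\chi_+(x)q_+(x),\qquad \Phi_0(x):=\int_0^x\phi(u)\,\dd u .
\]
The sign in the left tail is forced: \(b\ge g_->0\) there, so \(b\Phi'\ge1\) needs \(\Phi'\le -1/g_-\). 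With \(R:=\max\{|a_-|+2,|a_+|+2\}\), for \(x\le -R\) we get \(b(x)\Phi_0'(x)=b(x)q_-(x)\ge g_-(x)/g_-(x)=1\), and symmetrically for \(x\ge R\). Since \(q_\pm\) are smooth with bounded derivatives and \(\chi_\pm\) are smooth with bounded derivatives, \(\phi\) is smooth and bounded with bounded derivative, so \(\Phi_0'\) is bounded and Lipschitz. Moreover \(\Phi_0'\to0\) at \(\pm\infty\) because \(q_\pm\to0\). The integrability of \(q_\pm\) on the tails gives that \(\Phi_0\) is bounded, with finite limits at \(\pm\infty\). Adding the constant \(\sup|\Phi_0|\) then produces a nonnegative \(\Phi\in C_b^2\); note that \(\Phi''=\phi'\) is bounded.

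Membership in \(\mathcal D\) is routine. For \(|z|\le1\), Taylor's formula gives \(|\Phi(x+z)-\Phi(x)-z\Phi'(x)|\le \tfrac12\|\Phi''\|_\infty z^2\). For \(|z|>1\), the integrand is at most \(2\|\Phi\|_\infty\). Both bounds are uniform in \(x\), so \(\sup_x|\mathcal I\Phi(x)|<\infty\).

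The main point is \(\mathcal I\Phi(x)\to0\) as \(|x|\to\infty\), and I will get it by dominated convergence in \(z\) for each sequence \(x_k\to\pm\infty\). The domination is the one just given: \(\tfrac12\|\Phi''\|_\infty z^2\mathbf 1_{\{|z|\le1\}}+2\|\Phi\|_\infty\mathbf 1_{\{|z|>1\}}\), which is \(\nu\)-integrable. For pointwise convergence, fix \(z\). Then \(\Phi(x+z)-\Phi(x)\to0\) because \(\Phi\) has finite limits at \(\pm\infty\), and \(z\Phi'(x)\to0\) because \(\Phi'\to0\). Hence the integrand tends to \(0\) for every fixed \(z\), and \(\mathcal I\Phi(x_k)\to0\).

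No step seems genuinely hard. The only care needed is in the gluing. First, the sign convention on the left tail must be respected. Second, the cutoffs must leave \(\phi\) smooth with bounded derivative; this uses that \(q_\pm\) are globally defined and smooth with bounded derivatives, from Lemma~\ref{lem:b-smooth-tail-functions}. Third, one must check that the region \(|x|\ge R\) lies where \(\chi_\pm\equiv1\) and where the bounds \(b\ge g_\pm\) apply.
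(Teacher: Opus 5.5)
Your construction has a sign error, and as written it proves the opposite of the key inequality. On the left tail, Condition~\ref{cond:b-as-coeff}(ii) gives $b(x)\ge g_-(x)>0$. So $b(x)\Phi'(x)\ge1$ forces $\Phi'(x)\ge 1/b(x)>0$, not $\Phi'(x)\le -1/g_-(x)$. Since $1/b\le 1/g_-$ there, $\Phi'\ge q_-\ge 1/g_-$ suffices.

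With your definition $\phi=-\chi_-q_-+\chi_+q_+$, for $x\le -R$ you get $\Phi'(x)=-q_-(x)$. Hence $b(x)\Phi'(x)=-b(x)q_-(x)\le -b(x)/g_-(x)\le -1$, exactly the wrong direction. Your displayed computation $b(x)\Phi_0'(x)=b(x)q_-(x)$ silently drops the minus sign you had just introduced. This is the very point you flagged as needing care, and the ``forced'' sign is backwards.

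Dynamically, the drift of $X$ is $-b$, which points left in both tails: toward the compact set on the right, and toward explosion on the left. The Lyapunov requirement $\mathcal L\Phi=-b\Phi'+\mathcal I\Phi\le-\tfrac12$ needs $\Phi$ to decrease along that drift in both tails. So $\Phi$ must be increasing on both tails; it is a bounded monotone function, not a two-sided well.

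The repair is to drop the minus sign and take $\phi:=\chi_-q_-+\chi_+q_+\ge0$. Then $b\Phi'=bq_-\ge1$ for $x\le-R$ and $b\Phi'=bq_+\ge1$ for $x\ge R$. Every other step you wrote goes through unchanged:
\begin{itemize}
\item smoothness and bounded derivative of $\phi$, from Lemma~\ref{lem:b-smooth-tail-functions};
\item integrability of $\phi$, which gives finite limits of $\Phi$ at $\pm\infty$;
\item the uniform domination by $\tfrac12\|\Phi''\|_\infty z^2\mathbf 1_{\{|z|\le1\}}+2\|\Phi\|_\infty\mathbf 1_{\{|z|>1\}}$;
\item dominated convergence for $\mathcal I\Phi\to0$.
\end{itemize}
The corrected version is essentially the paper's proof. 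The paper uses $p=(1-\chi)q_-+\chi q_+$ and $\Phi(x)=\int_{-\infty}^x p(u)\,\dd u$. Integrating from $-\infty$ makes $\Phi$ nonnegative and nondecreasing automatically, so no additive constant is needed.
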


\begin{proof}
Let \(q_-\) and \(q_+\) be given by
Lemma~\ref{lem:b-smooth-tail-functions}.  Choose \(R>0\) so large that
\(-R\le a_-\) and \(R\ge a_+\), and choose
\(\chi\in C^\infty(\R;[0,1])\) such that
\[
    \chi(x)=0,\quad x\le-R;
    \quad
    \chi(x)=1,\quad x\ge R.
\]
Define
\[
    p(x):=(1-\chi(x))q_-(x)+\chi(x)q_+(x),
    \quad
    \Phi(x):=\int_{-\infty}^x p(u)\,\dd u .
\]
Then \(p\) is nonnegative, smooth, bounded, Lipschitz, and integrable on
\(\R\).  Hence \(\Phi\in C_b^2(\R)\), \(\Phi'=p\), and
\(\Phi'(x)\to0\) as \(|x|\to\infty\).  For \(x\le-R\),
\[
    b(x)\Phi'(x)
    =
    b(x)q_-(x)
    \ge
    g_-(x)\frac1{g_-(x)}
    =1,
\]
and the same argument with \(g_+\) gives the inequality for \(x\ge R\).
Moreover,
\[
    \lim_{x\to-\infty}\Phi(x)=0,
    \quad
    \lim_{x\to+\infty}\Phi(x)
    =\int_{\R}p(u)\,\dd u<\infty .
\]
Together with \(\Phi'(x)\to0\) as \(|x|\to\infty\), this gives, for each
fixed \(z\),
\[
    \Phi(x+z)-\Phi(x)-z\Phi'(x)\mathbf 1_{\{|z|\le1\}}
    \longrightarrow0,
    \quad  |x|\to\infty .
\]
Let \(C_\Phi:=\operatorname{Lip}(\Phi')\).  If \(|z|\le1\), then
\[
\begin{aligned}
\left|
    \Phi(x+z)-\Phi(x)-z\Phi'(x)
\right| =
\left|
    z\int_0^1\bigl(\Phi'(x+\theta z)-\Phi'(x)\bigr)\,\dd\theta
\right|
\le
\frac{C_\Phi}{2}z^2 .
\end{aligned}
\]
If \(|z|>1\), then
\[
    |\Phi(x+z)-\Phi(x)|\le 2\|\Phi\|_\infty .
\]
The dominating function
\[
    \frac{C_\Phi}{2}z^2\mathbf 1_{\{|z|\le1\}}
    +2\|\Phi\|_\infty\mathbf 1_{\{|z|>1\}}
\]
is \(\nu\)-integrable.  The dominated convergence theorem gives
\(\mathcal I\Phi(x)\to0\)
as $|x|\rightarrow\infty$. 
In particular, \(\Phi\in\mathcal D\).
\end{proof}

\begin{proposition}[Uniform Lyapunov entrance]\label{prop:b-lyapunov-input}
Assume that Condition~$\ref{cond:b-as-coeff}(ii)$ and
Condition~$\ref{cond:b-lyapunov}$ hold.  Then there
exist a compact interval \(K_0\subset\R\) and a constant \(M<\infty\) such that
\[
    \E[T_-^x\wedge\tau_{K_0}^x]\le M,
    \quad x\in\R,
\]
where \(\tau_{K_0}^x:=\inf\{t\ge0:X_t^x\in K_0\}\).
\end{proposition}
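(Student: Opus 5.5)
Since $\Phi$ from Lemma~\ref{lem:b-tail-scale} is bounded with $b\Phi'\ge1$ and $\mathcal I\Phi\to0$ far out, I will use $\Phi$ as a Lyapunov function with $\mathcal L\Phi\le -1/2$ outside a compact interval, and then run Dynkin's formula up to the exit from a large window.

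\emph{Step 1: the drift of $\Phi$.} Lemma~\ref{lem:b-tail-scale} gives $R>0$ with $b(x)\Phi'(x)\ge1$ for $|x|\ge R$, and $\mathcal I\Phi(x)\to0$ as $|x|\to\infty$. Enlarging $R$ if necessary, I also get $|\mathcal I\Phi(x)|\le 1/2$ for $|x|\ge R$. Then
\[
\mathcal L\Phi(x)=-b(x)\Phi'(x)+\mathcal I\Phi(x)\le -\tfrac12,\qquad |x|\ge R .
\]
I take $K_0:=[-R,R]$. Because $\Phi\in C_b^2(\R)\cap\mathcal D$, It\^o's formula holds for $\Phi(X^x)$ stopped at exit times of bounded intervals.

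\emph{Step 2: Dynkin's formula on a finite window.} Fix $x\notin K_0$ (the case $x\in K_0$ is trivial). For $n>|x|\vee R$, set
\[
\sigma_n:=\tau_{K_0}^x\wedge\tau_{-n,-}^x\wedge\tau_{n,+}^x\wedge n .
\]
Before $\sigma_n$ the process stays in $[-n,n]\setminus K_0$. On $[0,\sigma_n)$ the compensated small-jump integral $\int(\Phi(X_{s-}+z)-\Phi(X_{s-}))\widetilde{\mathcal N}$ has integrand bounded by $C(|z|\wedge1)$, since $\Phi'$ is bounded and $\Phi$ is bounded. This makes it a true martingale. The large jumps, taken against $\mathcal N$ minus its compensator, are bounded by $2\|\Phi\|_\infty$ and have finite intensity, so they are also a martingale. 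Hence
\[
\E\Phi(X^x_{\sigma_n})=\Phi(x)+\E\int_0^{\sigma_n}\mathcal L\Phi(X_s^x)\,\dd s\le \Phi(x)-\tfrac12\E\sigma_n .
\]
The jump at time $\sigma_n$ may overshoot far outside $[-n,n]$ or into $K_0$. This is harmless because $\Phi\ge0$ is bounded globally, and $\mathcal L\Phi$ is evaluated only at $X_s$ with $s<\sigma_n$. It follows that $\E\sigma_n\le 2\|\Phi\|_\infty$, uniformly in $x$ and $n$. This uniform overshoot control is exactly why $\Phi$ was built bounded.

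\emph{Step 3: letting $n\to\infty$.} As $n\to\infty$, $\sigma_n\uparrow \tau_{K_0}^x\wedge T_-^x\wedge T_+^x$. Monotone convergence then gives
\[
\E[\tau_{K_0}^x\wedge T_-^x\wedge T_+^x]\le 2\|\Phi\|_\infty=:M .
\]
It remains to remove $T_+^x$. Condition~\ref{cond:b-lyapunov} gives $b\ge g_+>0$ on $[a_+,\infty)$, so Lemma~\ref{lem:b-no-right-explosion-from-return}(ii) yields $T_+^x=\infty$ almost surely. This gives the claim.

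\emph{Main obstacle.} The delicate point is Step~2: justifying that the stopped local martingale is a true martingale and that overshoots at $\sigma_n$ do no harm. Both reduce to the boundedness of $\Phi$ and $\Phi'$. The other ingredient is securing $|\mathcal I\Phi|\le1/2$ near both infinities simultaneously, which is exactly the limit statement in Lemma~\ref{lem:b-tail-scale}.
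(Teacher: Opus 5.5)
Your proposal is correct and follows essentially the same route as the paper. Both arguments use the bounded function $\Phi$ from Lemma~\ref{lem:b-tail-scale} and take $K_0=[-R,R]$ with $\mathcal L\Phi\le-\frac12$ off $K_0$. Both then apply Dynkin's formula at a localized stopping time to get $\E\sigma\le 2\|\Phi\|_\infty$, pass to the limit by monotone convergence, and remove $T_+^x$ via Lemma~\ref{lem:b-no-right-explosion-from-return}(ii). Your single localization index $n$, used in place of the paper's separate $t,n,m$, changes nothing. Your explicit check that the compensated jump integrals are true martingales just fills in what the paper abbreviates as ``Dynkin's formula gives.''
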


\begin{proof}
Let \(\Phi\) be the function given by Lemma~\ref{lem:b-tail-scale}.
Choose
\(R\) so large that \(b(x)\Phi'(x)\ge1\) for \(|x|\ge R\) and
\[
    |\mathcal I\Phi(x)|\le\frac12,\quad |x|\ge R,
\]
and set \(K_0:=[-R,R]\).    Hence, for \(x\notin K_0\),
\begin{equation}
\label{eq:b-LPhi-negative}
\begin{aligned}
    \mathcal L \Phi(x)
    &=
    -b(x)\Phi'(x)
    +\int_{\R\setminus\{0\}}
    \left(
        \Phi(x+z)-\Phi(x)-z\Phi'(x)\mathbf 1_{\{|z|\le1\}}
    \right)\nu(\dd z)
    \\
    &\le -1+\mathcal I\Phi(x)
    \le -\frac12.
\end{aligned}
\end{equation}

If \(x\in K_0\), the desired entrance-time bound is trivial.  Below, we consider the case that $x\notin K_0$. Take
\(n>|x|\), \(m>|x|\), and set
\[
    \sigma_{t,n,m}^x
    :=t\wedge\tau_{K_0}^x\wedge\tau_{-n,-}^x\wedge\tau_{m,+}^x.
\]
Dynkin's formula gives
\[
    \E\Phi(X_{\sigma_{t,n,m}^x}^x)
    =
    \Phi(x)+
    \E\int_0^{\sigma_{t,n,m}^x}\mathcal L\Phi(X_s^x)\,\dd s .
\]
On \(\{s<\sigma_{t,n,m}^x\}\), the path has not entered \(K_0\).  Therefore
\eqref{eq:b-LPhi-negative} gives
\(\mathcal L\Phi(X_s^x)\le-1/2\), and hence
\[
    0\le
    \E\Phi(X_{\sigma_{t,n,m}^x}^x)
    \le
    \Phi(x)-\frac12\E\sigma_{t,n,m}^x .
\]
Thus \(\E\sigma_{t,n,m}^x\le2\|\Phi\|_\infty\).  By
Condition~\ref{cond:b-lyapunov} and
Lemma~\ref{lem:b-no-right-explosion-from-return},
\(\Pp(T_+^x=\infty)=1\).  Therefore, letting first \(m\to\infty\), next
\(t\to\infty\), and finally
\(n\to\infty\), by the monotone convergence theorem, gives
\[
    \E[T_-^x\wedge\tau_{K_0}^x]
    \le 2\|\Phi\|_\infty
    =:M,
    \quad x\notin K_0 .
\] The proof is complete.
\end{proof}

\begin{proposition}[A sufficient condition for right-tail return]
\label{cor:b-right-return-from-osgood}
Assume that Condition~$\ref{cond:b-as-coeff}(ii)$ and
Condition~$\ref{cond:b-lyapunov}$ hold.  Then
Condition~$\ref{cond:b-right-return}$ holds.
\end{proposition}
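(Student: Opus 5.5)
We need to show: there is \(R\) such that \(\tau_{R,-}^x<\infty\) a.s. for all \(x>R\). The natural route is through Proposition~\ref{prop:b-lyapunov-input}, which gives a compact interval \(K_0=[-R_0,R_0]\) and a constant \(M\) with \(\E[T_-^x\wedge\tau_{K_0}^x]\le M\) for every \(x\in\R\). We take \(R:=R_0\), the right endpoint of \(K_0\), so the required level is the one from that proposition.

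Fix \(x>R_0\). Since \(\E[T_-^x\wedge\tau_{K_0}^x]\le M<\infty\), we have \(T_-^x\wedge\tau_{K_0}^x<\infty\) almost surely. On \(\{\tau_{K_0}^x\le T_-^x\}\), the time \(\tau_{K_0}^x\) is finite and \(X^x_{\tau_{K_0}^x}\in K_0\) by right-continuity and closedness of \(K_0\). In particular \(X^x_{\tau_{K_0}^x}\le R_0\), so \(\tau_{R_0,-}^x\le\tau_{K_0}^x<\infty\). On the complementary event \(\{T_-^x<\tau_{K_0}^x\}\), \(T_-^x\) is finite, and by definition \(T_-^x=\lim_n\tau_{-n,-}^x\). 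Hence \(\tau_{-n,-}^x<\infty\) for \(n>|R_0|\), and \(\tau_{R_0,-}^x\le\tau_{-n,-}^x<\infty\) because \(X^x\le -n\le R_0\) at that time. Either way, \(\tau_{R_0,-}^x<\infty\) almost surely, which is Condition~\ref{cond:b-right-return}.

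The only point needing care is the absorbing convention and possible right explosion. One must make sure that \(T_-^x\wedge\tau_{K_0}^x<\infty\) is not achieved through \(T_+^x\), that is, that the process does not reach \(+\infty\) without ever entering \(K_0\). This is already handled: under Condition~\ref{cond:b-lyapunov}, \(b\ge g_+>0\) on \([a_+,\infty)\), so the second alternative of Lemma~\ref{lem:b-no-right-explosion-from-return} gives \(T_+^x=\infty\) almost surely. This is the same input used inside the proof of Proposition~\ref{prop:b-lyapunov-input}, so no further work is needed. If one worries that \(\tau_{K_0}^x\) is attained only as a limit, right-continuity of paths at the infimum time together with closedness of \(K_0\) settles it. Alternatively, one can note that \(X^x\) cannot jump over \(K_0\) from the right to the left of it without passing below \(R_0\) anyway, which is all that is needed.
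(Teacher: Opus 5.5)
Your proof is correct and is essentially the paper's argument: take $R$ to be the right endpoint of $K_0$ from Proposition~\ref{prop:b-lyapunov-input}, note that $T_-^x\wedge\tau_{K_0}^x<\infty$ almost surely, and deduce $\tau_{R,-}^x<\infty$ on each of the two alternatives. Your extra precautions are harmless but not needed. The bound $\tau_{R,-}^x\le\tau_{K_0}^x$ follows directly from $K_0\subset(-\infty,R]$, with no right-continuity argument. Also, a right explosion before entering $K_0$ would force $T_-^x\wedge\tau_{K_0}^x=\infty$, which the finite mean already rules out.
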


\begin{proof}
Let \(K_0=[-R,R]\) be the interval obtained in
Proposition~\ref{prop:b-lyapunov-input}.  For \(x>R\),
\[
    \E[T_-^x\wedge\tau_{K_0}^x]<\infty,
\]
so \(T_-^x\wedge\tau_{K_0}^x<\infty\) almost surely.  On
\(\{\tau_{K_0}^x<\infty\}\), we have \(\tau_{R,-}^x\le\tau_{K_0}^x<\infty\).  On
\(\{T_-^x<\infty\}\), choose \(n\) so large that \(-n\le R\).  Since
\(T_-^x=\lim_n\tau_{-n,-}^x\), the time \(\tau_{-n,-}^x\) is finite before left
explosion, and therefore \(\tau_{R,-}^x\le\tau_{-n,-}^x<\infty\).  Hence
\[
    \Pp(\tau_{R,-}^x<\infty)=1,\quad x>R,
\]
which is Condition~\ref{cond:b-right-return}.
\end{proof}

\section{Explosion Below a Finite Level}\label{sec:b-compact-input}

The main purpose of this section is to give a uniform positive probability of explosion after the
process enters a right-bounded region.  Condition~\ref{cond:b-as-coeff}(i)
gives two ways to reach the far-left Osgood region: either the effective
downward coefficient \(b+\kappa_+\) is positive everywhere, or finitely many
negative jumps push the path down.  The following comparison then forces
explosion.

\begin{lemma}[Localized Osgood comparison]\label{lem:b-osgood-comparison}
Let \(a_*\ge0\), and let \(h:[a_*,\infty)\to(0,\infty)\) be
nondecreasing such that \(\int_{a_*}^\infty h(u)^{-1}\,\dd u<\infty\).
Let \(\xi=(\xi_t)_{0\le t<T}\) be a real-valued c\`adl\`ag path with
explosion time \(T\in(0,\infty]\).  Let \(s<T\), \(a\ge a_*\), and
\(\rho>\int_a^\infty h(u)^{-1}\,\dd u\).
Suppose that, for
\(s\le t<T\wedge(s+\rho)\), \(-\xi_t\in[a_*,\infty)\)
and
\[
    -\xi_t\ge a+\int_s^t h(-\xi_u)\,\dd u .
\]
Then
\[
    T\le s+\int_a^\infty\frac{\dd u}{h(u)}.
\]
\end{lemma}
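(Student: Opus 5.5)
We compare $-\xi$ with the solution of the Osgood ODE $\dot y=h(y)$, $y(s)=a$, and argue by contradiction. Set
\[
    H(y):=\int_a^y\frac{\dd u}{h(u)},\qquad y\ge a,\qquad
    \rho_0:=\int_a^\infty\frac{\dd u}{h(u)}<\rho .
\]
Suppose that $T>s+\rho_0$. We will derive a contradiction on the interval $[s,s+\rho_0)$. On this interval both hypotheses apply, because $s+\rho_0<T\wedge(s+\rho)$.

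\textbf{Step 1: an integral inequality.} For $t\in[s,s+\rho_0)$ put
\[
    \eta_t:=a+\int_s^t h(-\xi_u)\,\dd u .
\]
The integrand is locally bounded: $\xi$ is càdlàg on $[s,t]\subset[s,T)$, so it is bounded there, and $h$ is monotone. Hence $\eta$ is absolutely continuous and nondecreasing, with $\eta_s=a$. The hypothesis gives $-\xi_u\ge\eta_u\ge a\ge a_*$. Since $h$ is nondecreasing,
\[
    \dot\eta_t=h(-\xi_t)\ge h(\eta_t)\quad\text{for a.e. } t .
\]

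\textbf{Step 2: integrate the Osgood scale.} The function $H$ is absolutely continuous on compacts and nondecreasing, with $H'=1/h$. Because $\eta$ is absolutely continuous and monotone, the chain rule applies:
\[
    \frac{\dd}{\dd t}H(\eta_t)=\frac{\dot\eta_t}{h(\eta_t)}\ge1 .
\]
Therefore
\[
    H(\eta_t)\ge t-s,\qquad t\in[s,s+\rho_0).
\]
Since $H<\rho_0$ on $[a,\infty)$, this forces $\eta_t\to\infty$, and hence $-\xi_t\to\infty$, as $t\uparrow s+\rho_0$.

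\textbf{Step 3: contradiction.} The point $s+\rho_0$ lies strictly inside $[s,T)$, where $\xi$ is real-valued and càdlàg. So $\xi$ has a finite left limit at $s+\rho_0$. This contradicts $-\xi_t\to\infty$ as $t\uparrow s+\rho_0$. Hence $T\le s+\rho_0$.

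The main delicate point is Step 2. One must justify the chain rule for $H\circ\eta$ when $h$ is merely monotone, hence possibly discontinuous. This is fine because $H$ is Lipschitz on each compact subset of $[a,\infty)$ (since $h\ge h(a)>0$), and $\eta$ is monotone and absolutely continuous. Alternatively, one can avoid the chain rule entirely by a change of variables: the map $t\mapsto\eta_t$ is monotone, so
\[
    \int_s^t\frac{\dot\eta_u}{h(\eta_u)}\,\dd u=\int_{a}^{\eta_t}\frac{\dd v}{h(v)}=H(\eta_t).
\]
The remaining ingredient is that a càdlàg path cannot blow up strictly before its explosion time.
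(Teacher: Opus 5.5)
Your proof is correct and follows essentially the same route as the paper, which also sets $I_t=a+\int_s^t h(-\xi_u)\,\dd u$, derives $\Theta(I_t)\ge t-s$ from $I'_t\ge h(I_t)$, and reaches a contradiction under the assumption $T>s+\int_a^\infty h(u)^{-1}\,\dd u$. The only differences are cosmetic: the paper evaluates the inequality directly at $t_0=s+\int_a^\infty h(u)^{-1}\,\dd u$, which lies in the admissible range, rather than passing to the left limit; and your justification of the chain rule for $\Theta\circ I$ when $h$ is merely monotone is more careful than the paper's.
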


\begin{proof}
For \(s\le t<T\wedge(s+\rho)\), set
\[
    I_t:=a+\int_s^t h(-\xi_u)\,\dd u .
\]
Then \(I_t\le-\xi_t\) with \(I_s=a\), and \(t\mapsto I_t\) is absolutely continuous.  Since
\(h\) is nondecreasing,
\[
    I'_t=h(-\xi_t)\ge h(I_t)
    \quad \text{for a.e. }t\in(s,T\wedge(s+\rho)).
\]
Define
\[
    \Theta(r):=\int_a^r\frac{\dd u}{h(u)},
    \quad r\ge a .
\]
Then \((\Theta(I_t))'\ge1\) for a.e. \(t\), hence
\begin{equation}\label{eq:b-theta-lower-bound}
    \Theta(I_t)\ge t-s,
    \quad s\le t<T\wedge(s+\rho).
\end{equation}
If
\[
    T>s+\int_a^\infty\frac{\dd u}{h(u)},
\]
then
\[
    t_0:=s+\int_a^\infty\frac{\dd u}{h(u)}
\]
is strictly smaller than \(T\wedge(s+\rho)\).  Applying
\eqref{eq:b-theta-lower-bound} at
\(t=t_0\) gives
\[
    \Theta(I_{t_0})
    \ge
    t_0-s
    =
    \int_a^\infty\frac{\dd u}{h(u)}.
\]
On the other hand, if  \(t_0<T\), so \(I_{t_0}<\infty\)
and
\[
    \Theta(I_{t_0})
    =
    \int_a^{I_{t_0}}\frac{\dd u}{h(u)}
    <
    \int_a^\infty\frac{\dd u}{h(u)},
\]
which is a contradiction.  This proves the claim.
\end{proof}

\begin{lemma}\label{lem:b-left-tail-entrance}
Assume that Condition~$\ref{cond:b-as-coeff}(ii)$ holds.  Let \(a>0\) satisfy
\[
    -a\le a_-,
    \quad
    \int_a^\infty\frac{\dd u}{g_-(-u)}<1.
\]
Fix \(x\in\R\), let \(S\) be a nonnegative
stopping time
with respect to the filtration \((\mathcal F_t)_{t\ge0}\),
and let \(c>0\).
Suppose that, on an event \(E\),
\begin{equation}\label{eq:b-left-tail-entrance-assumptions}
    S<T_-^x\wedge T_+^x,
    \quad
    X_S^x\le-a-c-1,
    \quad
    \sup_{0\le t\le1}(L_{S+t}-L_S)\le c .
\end{equation}
Then, on \(E\),
\[
    T_-^x
    \le
    S+\int_a^\infty\frac{\dd u}{g_-(-u)}
    <S+1 .
\]
\end{lemma}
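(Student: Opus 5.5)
We reduce to Lemma~\ref{lem:b-osgood-comparison}, applied with \(\xi=X^x\), \(T=T_-^x\wedge T_+^x\), \(s=S\), \(a_*=a\), \(\rho=1\) and \(h(u):=g_-(-u)\) for \(u\ge a\). Since \(g_-\) is nonincreasing on \((-\infty,a_-]\) and \(-a\le a_-\), the function \(h\) is nondecreasing and positive on \([a,\infty)\). Moreover \(\int_a^\infty h(u)^{-1}\,\dd u<1=\rho\). Fix \(\omega\in E\). The work is to verify, for \(S\le t<T\wedge(S+1)\), both \(-X_t^x\ge a\) and
\[
    -X_t^x\ge a+\int_S^t g_-(X_u^x)\,\dd u .
\]
The lemma then gives \(T\le S+\int_a^\infty \dd u/g_-(-u)<S+1\). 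We still need to identify \(T\) with \(T_-^x\). If the inequality holds on all of \([S,T\wedge(S+1))\), the path stays below \(-a\) there, so it cannot escape to \(+\infty\). Hence \(T_+^x>T\wedge(S+1)\), and therefore \(T=T_-^x\) after we check that \(T\le S+1\).

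The key pathwise estimate comes from \eqref{eq:b-finite-line-equation}. For \(S\le t<T\),
\[
    X_t^x=X_S^x-\int_S^t b(X_u^x)\,\dd u+(L_t-L_S).
\]
Set \(\sigma:=\inf\{t\ge S: X_t^x>-a\}\wedge T\wedge(S+1)\). For \(S\le u<\sigma\) we have \(X_u^x\le -a\le a_-\), so Condition~\ref{cond:b-as-coeff}(ii) gives \(b(X_u^x)\ge g_-(X_u^x)>0\). Combining this with \(L_t-L_S\le c\) and \(X_S^x\le -a-c-1\), we get for \(S\le t<\sigma\)
\[
    -X_t^x\ge a+c+1-c+\int_S^t g_-(X_u^x)\,\dd u\ge a+1+\int_S^t g_-(X_u^x)\,\dd u .
\]

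The main obstacle is the continuity argument showing \(\sigma=T\wedge(S+1)\), because the path has jumps. We argue by contradiction. Suppose \(\sigma<T\wedge(S+1)\). The displayed bound extends to \(t=\sigma\) itself: the integral is continuous in \(t\), and the increment \(L_\sigma-L_S\) is still at most \(c\) because \(\sigma\le S+1\). This gives \(-X_\sigma^x\ge a+1>a\). On the other hand, right-continuity and the definition of \(\sigma\) give \(X_\sigma^x\ge -a\). Hence \(\sigma\) cannot be the first exceedance time, a contradiction.

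The point of this argument is that the bound uses only the global sup-control of \(L\) and the sign of \(b\). Upward jumps of \(X\) are jumps of \(L\), so they are already accounted for in the increment \(L_t-L_S\le c\). The extra margin \(1\) makes the resulting inequality strict.

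With the inequality established on \([S,T\wedge(S+1))\), Lemma~\ref{lem:b-osgood-comparison} applies and yields \(T<S+1\). By the first paragraph, \(T=T_-^x\), which concludes the proof.
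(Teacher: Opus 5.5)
Your proof is correct and takes essentially the same route as the paper. You rule out a first passage above $-a$ before $T\wedge(S+1)$ using the positivity of $b$ there and the bound $L_t-L_S\le c$. You then apply Lemma~\ref{lem:b-osgood-comparison} with $h(u)=g_-(-u)$, and use the confinement below $-a$ to exclude right explosion, which identifies $T_-^x\wedge T_+^x$ with $T_-^x$. The only differences are cosmetic: you use strict rather than non-strict exceedance in the hitting time, and you establish $T_+^x>T\wedge(S+1)$ after invoking the comparison lemma rather than before it.
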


\begin{proof}
We work on the event \(E\) throughout the proof.  Set
\[
    \eta:=\inf\{t\ge S:X_t^x\ge-a\}.
\]
If \(\eta<T_-^x\wedge T_+^x\wedge(S+1)\), then
\(X_u^x<-a\le a_-\) for \(S\le u<\eta\), and hence \(b(X_u^x)>0\).
By \eqref{eq:b-left-tail-entrance-assumptions},
\[
    X_\eta^x
    =
    X_S^x-\int_S^\eta b(X_u^x)\,\dd u+L_\eta-L_S
    \le-a-c-1+c<-a,
\]
contrary to the definition of \(\eta\).  Thus, $\eta\ge T_-^x\wedge T_+^x\wedge(S+1)$; that is,
\[
    X_t^x<-a,
    \quad
    S\le t<T_-^x\wedge T_+^x\wedge(S+1).
\]
If \(T_+^x<T_-^x\wedge(S+1)\), the last display gives
\(X_t^x<-a\) for \(S\le t<T_+^x\), contradicting
\(X_t^x\to+\infty\) as \(t\uparrow T_+^x\).  Therefore, on \(E\),
\[
    T_+^x\ge T_-^x\wedge(S+1).
\]
For \(S\le t<T_-^x\wedge(S+1)\),
\eqref{eq:b-finite-line-equation}, \eqref{eq:b-left-tail-entrance-assumptions} and Condition \ref{cond:b-as-coeff}(ii) now give
\[
\begin{aligned}
    -X_t^x
    &=
    -X_S^x+\int_S^t b(X_u^x)\,\dd u-(L_t-L_S)\\
    &\ge
    a+\int_S^t g_-(X_u^x)\,\dd u .
\end{aligned}
\]
Lemma~\ref{lem:b-osgood-comparison}, applied with
\(T=T_-^x\wedge T_+^x\), \(s=S\), \(\rho=1\), and
\(h(y)=g_-(-y)\), yields
\[
    T_-^x\wedge T_+^x
    \le
    S+\int_a^\infty\frac{\dd u}{g_-(-u)}
    <S+1 .
\]
Together with \(T_+^x\ge T_-^x\wedge(S+1)\), this implies
\(T_-^x=T_-^x\wedge T_+^x\), and hence proves the claim.
\end{proof}

\begin{lemma}[A small-subordinator event]\label{lem:b-small-subordinator-event}
Let \(J\) be a driftless subordinator with L\'evy measure \(\mu\) satisfying
\(\int_{(0,1)}z\,\mu(\dd z)<\infty\).  Then, for every \(t>0\) and every
\(C>0\),
\[
    \Pp(J_t\le C)>0 .
\]
\end{lemma}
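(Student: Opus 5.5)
The plan is to split the subordinator at a small level $\delta\in(0,1)$ into a compound Poisson part and a small-jump part, and then control each piece separately. Write
\[
J_t=J_t^{(\delta)}+\widehat J_t^{(\delta)},\qquad
J_t^{(\delta)}:=\sum_{s\le t}\Delta J_s\mathbf 1_{\{\Delta J_s\le\delta\}},\quad
\widehat J_t^{(\delta)}:=\sum_{s\le t}\Delta J_s\mathbf 1_{\{\Delta J_s>\delta\}}.
\]
Since $J$ is driftless, with $\int_{(0,1)}z\,\mu(\dd z)<\infty$, these two processes are independent. The first, $J^{(\delta)}$, is a driftless subordinator with L\'evy measure $\mu|_{(0,\delta]}$. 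The second, $\widehat J^{(\delta)}$, is a compound Poisson process with finite jump rate $\mu((\delta,\infty))<\infty$. The finiteness of this rate holds because $\mu$ is a L\'evy measure, which requires $\int(1\wedge z)\,\mu(\dd z)<\infty$ for a subordinator, or at least $\mu(\{z>\delta\})<\infty$.

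For the compound Poisson part, the event that it has no jumps on $[0,t]$ has probability $e^{-t\mu((\delta,\infty))}>0$, and on that event $\widehat J_t^{(\delta)}=0$.

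For the small-jump part, the first-moment formula for Poisson integrals gives
\[
\E J_t^{(\delta)}=t\int_{(0,\delta]}z\,\mu(\dd z).
\]
This tends to $0$ as $\delta\downarrow0$ by dominated convergence, using $\int_{(0,1)}z\,\mu(\dd z)<\infty$. I would choose $\delta$ so small that $\E J_t^{(\delta)}\le C/2$. Markov's inequality then gives
\[
\Pp(J_t^{(\delta)}\le C)\ge 1-\tfrac12=\tfrac12 .
\]

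By independence, these combine to
\[
\Pp(J_t\le C)\ge \tfrac12 e^{-t\mu((\delta,\infty))}>0 .
\]
The only point needing care is the independence of the two parts, which follows because they are functionals of the Poisson random measure of jumps restricted to the disjoint sets $(0,\delta]$ and $(\delta,\infty)$. The absence of drift matters, since a positive drift $d$ with $dt>C$ would make the conclusion fail. I do not expect a genuine obstacle here.
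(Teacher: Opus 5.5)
Your proposal is correct and follows essentially the same route as the paper: split $J$ at a small level into a small-jump part and a compound Poisson part, bound the small-jump part by Markov's inequality using $t\int_{(0,\delta]}z\,\mu(\dd z)\to0$, use $\Pp(\text{no large jumps on }[0,t])=e^{-t\mu((\delta,\infty))}>0$, and combine the two by independence. Your explicit choice $\E J_t^{(\delta)}\le C/2$ (giving the constant $\tfrac12$) is only a cosmetic difference from the paper's choice $\E J_t^1<C$.
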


\begin{proof}
Fix \(t>0\) and \(C>0\), and let \(\mathcal N_J\) be the Poisson random
measure of the jumps of \(J\).  For \(\varepsilon\in(0,1)\), write
\[
    J_t=J_t^1+J_t^2,
    \quad
    J_t^1
    :=
    \int_0^t\int_{(0,\varepsilon]}z\,\mathcal N_J(\dd s,\dd z),
    \quad
    J_t^2
    :=
    \int_0^t\int_{(\varepsilon,\infty)}z\,\mathcal N_J(\dd s,\dd z).
\]
Then, for all $t\ge0$,
\[
    \Pp(J_t^2=0)
    =
    \exp\{-t\mu((\varepsilon,\infty))\}
    >0,
\]
and
\[
    \E J_t^1
    =
    t\int_{(0,\varepsilon]}z\,\mu(\dd z)\longrightarrow0,
    \quad \varepsilon\downarrow0 .
\]
Choose \(\varepsilon\) so small that
\(\E J_t^1<C\).  Markov's inequality gives
\[
    \Pp(J_t^1\le C)
    \ge
    1-\frac{\E J_t^1}{C}
    >0 .
\]
The two parts
$J_t^1$ and $J_t^2$
are independent, hence
\(\Pp(J_t\le C)>0\).
\end{proof}

\begin{lemma}[Downward oscillation from infinite positive small jumps]
\label{lem:b-infinite-positive-variation-lower-event}
Assume
\[
    \int_{(0,1)}z\,\nu(\dd z)=\infty .
\]
Then, for every \(t_0>0\) and every \(B,C>0\),
\[
    \Pp\left(
        \sup_{0\le t\le t_0}L_t\le B,\quad
        \inf_{0\le t\le t_0}L_t\le -C
    \right)>0 .
\]
\end{lemma}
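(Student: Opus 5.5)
The idea is to decompose \(L\) into independent pieces and make each piece behave suitably on a single event. Fix \(t_0,B,C>0\). For \(\varepsilon\in(0,1)\), write
\[
L_t=L^{\varepsilon}_t+P^{\varepsilon}_t-t\kappa_\varepsilon+R^\varepsilon_t,
\qquad
\kappa_\varepsilon:=\int_{(\varepsilon,1]}z\,\nu(\dd z).
\]
Here \(P^\varepsilon_t:=\int_0^t\int_{(\varepsilon,1]}z\,\mathcal N(\dd s,\dd z)\) is a compound Poisson process with positive jumps, and
\[
R^\varepsilon_t:=\int_0^t\int_{\{|z|>1\}}z\,\mathcal N(\dd s,\dd z)+\int_0^t\int_{[-1,0)}z\,\widetilde{\mathcal N}(\dd s,\dd z)
\]
collects the big jumps and the compensated negative small jumps. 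Finally, \(L^\varepsilon_t:=\int_0^t\int_{(0,\varepsilon]}z\,\widetilde{\mathcal N}(\dd s,\dd z)\) is the compensated positive small-jump martingale. These three processes are independent, being driven by disjoint regions of \(\mathcal N\).

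The key point is the hypothesis \(\kappa_+=\infty\), which gives \(\kappa_\varepsilon\uparrow\infty\) as \(\varepsilon\downarrow0\). On the event \(\{P^\varepsilon\equiv0 \text{ on }[0,t_0]\}\), which has probability \(e^{-t_0\nu((\varepsilon,1])}>0\), the process \(L\) equals \(L^\varepsilon_t+R^\varepsilon_t-t\kappa_\varepsilon\). This is a strong deterministic downward drift plus perturbations.

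The perturbations are controlled as follows. By Doob's \(L^2\) inequality,
\[
\E\sup_{t\le t_0}|L^\varepsilon_t|^2\le 4t_0\int_{(0,\varepsilon]}z^2\,\nu(\dd z)\le 4t_0\int_{(0,1]}z^2\,\nu(\dd z).
\]
For the remaining piece, \(R^\varepsilon=R\) does not actually depend on \(\varepsilon\). Since \(R\) is càdlàg with \(R_0=0\), there exist \(\delta>0\) and \(D<\infty\) such that the event
\[
\Bigl\{\sup_{t\le\delta}|R_t|\le B/2,\ \sup_{t\le t_0}|R_t|\le D\Bigr\}
\]
has positive probability.

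I would avoid the upper constraint on all of \([0,t_0]\) fighting with fluctuations by relying on the drift. For \(t\ge\delta\), the term \(-t\kappa_\varepsilon\le-\delta\kappa_\varepsilon\) dominates once \(\kappa_\varepsilon\) is large. For \(t\le\delta\), the drift is nonpositive, so it only helps the upper bound. Concretely, pick \(\varepsilon\) so small that \(\delta\kappa_\varepsilon\ge D+B+C+1\). On the event where \(\sup_{t\le t_0}|L^\varepsilon_t|\le\min(B/2,1)\), the \(R\)-event above holds, and \(P^\varepsilon\equiv0\), we get two bounds:
\[
\sup_{t\le t_0}L_t\le\max\Bigl(B,\ \min(B/2,1)+D-\delta\kappa_\varepsilon\Bigr)\le B,
\qquad
L_{\delta}\le 1+D-\delta\kappa_\varepsilon\le -C.
\]
Here I take \(\delta\le t_0\).

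The main obstacle is to make the event on \(L^\varepsilon\) have positive probability uniformly as \(\varepsilon\) shrinks. This follows from Chebyshev's inequality, because \(\E\sup_{t\le t_0}|L^\varepsilon_t|^2\to0\) as \(\varepsilon\downarrow0\). So for small \(\varepsilon\) that probability exceeds \(1/2\).

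The three events involve independent processes: \(L^\varepsilon\), \(R\) and \(P^\varepsilon\). Their intersection therefore has probability equal to the product of positive numbers. This proves the lemma.
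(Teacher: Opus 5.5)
Your proof is correct and is essentially the paper's argument. Both proofs use the same decomposition: the compensated positive jumps in $(0,\varepsilon]$ are made small by Doob's inequality as $\varepsilon\downarrow0$, a compound Poisson part of positive jumps above $\varepsilon$ is forced to vanish on $[0,t_0]$, an $\varepsilon$-independent remainder is controlled on $[0,\delta]$ and on $[0,t_0]$, and the divergent compensator $\kappa_\varepsilon$ then pushes $L$ below $-C$ from time $\delta$ on. The only cosmetic difference is that you keep the large positive jumps inside $R$ and bound $|R|$ on both sides, whereas the paper puts all positive jumps above $\varepsilon$ into the part required to vanish and needs only one-sided bounds on $R$.
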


\begin{proof}
For \(\varepsilon\in(0,1)\), decompose
\[
    L_t
    =
    M_t^\varepsilon+J_t^{\varepsilon,+}-c_\varepsilon t+R_t,\quad t\ge0,
\]
where
\[
\begin{aligned}
    M_t^\varepsilon
    &:=
    \int_0^t\int_{(0,\varepsilon]}z\,
    \widetilde{\mathcal N}(\dd s,\dd z),\\
    J_t^{\varepsilon,+}
    &:=
    \int_0^t\int_{(\varepsilon,\infty)}z\,
    \mathcal N(\dd s,\dd z),\\
    c_\varepsilon
    &:=
    \int_{(\varepsilon,1]}z\,\nu(\dd z),\\
    R_t
    &:=
    \int_0^t\int_{[-1,0)}z\,
    \widetilde{\mathcal N}(\dd s,\dd z)
    +
    \int_0^t\int_{(-\infty,-1)}z\,
    \mathcal N(\dd s,\dd z).
\end{aligned}
\]
Then the three L\'evy processes \((M_t^\varepsilon)_{t\ge0}\), \((J_t^{\varepsilon,+})_{t\ge0}\) and \((R_t)_{t\ge0}\)
above are independent,
\(c_\varepsilon\to\infty\) as \(\varepsilon\downarrow0\), and
\[
    \E\bigl[(M_{t_0}^\varepsilon)^2\bigr]
    =
    t_0\int_{(0,\varepsilon]}z^2\,\nu(\dd z)
    \longrightarrow0.
\]

Since \((R_t)_{t\ge0}\) has c\`adl\`ag paths, choose \(K\ge B/4\) such that
\[
    \Pp\left(\sup_{0\le t\le t_0}R_t\le K\right)>\frac34 .
\]
By the right-continuity at zero of the process $(R_t)_{t\ge0}$, we may then choose
\(\delta\in(0,t_0)\) such that
\[
    \Pp\left(\sup_{0\le t\le\delta}R_t\le\frac B4\right)>\frac34 .
\]
Thus
\[
    \Pp\left(
        \sup_{0\le t\le\delta}R_t\le\frac B4,\quad
        \sup_{0\le t\le t_0}R_t\le K
    \right)>\frac12 .
\]
Choose \(\varepsilon>0\) so small that
\[
    c_\varepsilon\delta\ge C+K+\frac B4
\]
and, by Doob's inequality,
\[
\begin{aligned}
    \Pp\left(
        \sup_{0\le t\le t_0}|M_t^\varepsilon|>\frac B4
    \right)
    &\le
    \frac{64t_0}{B^2}
    \int_{(0,\varepsilon]}z^2\,\nu(\dd z)
    <1 .
\end{aligned}
\]
Finally,
\[
    \Pp(J_{t_0}^{\varepsilon,+}=0)
    =
    \exp\{-t_0\nu((\varepsilon,\infty))\}>0 .
\]
Define
\[
\begin{aligned}
    E_\varepsilon
    &:=
    \left\{
        \sup_{0\le t\le\delta}R_t\le\frac B4,\quad
        \sup_{0\le t\le t_0}R_t\le K
    \right\}\cap
    \left\{
        \sup_{0\le t\le t_0}|M_t^\varepsilon|\le\frac B4
    \right\}
    \cap
    \{J_{t_0}^{\varepsilon,+}=0\}.
\end{aligned}
\]
Recall that
\((M_t^\varepsilon)_{t\ge0}\), \((J_t^{\varepsilon,+})_{t\ge0}\) and \((R_t)_{t\ge0}\) are independent, and so
\(\Pp(E_\varepsilon)>0\).  Since \(J^{\varepsilon,+}\)
is nondecreasing, \(J_{t_0}^{\varepsilon,+}=0\) implies
\(J_t^{\varepsilon,+}=0\) for \(0\le t\le t_0\).  Hence, on
\(E_\varepsilon\),
\[
\begin{aligned}
    L_t
    &=
    M_t^\varepsilon-c_\varepsilon t+R_t
    \le
    |M_t^\varepsilon|+R_t
    \le
    \frac B4+\frac B4
    =
    \frac B2<B,
    &&0\le t\le\delta,\\
    L_t
    &=
    M_t^\varepsilon-c_\varepsilon t+R_t
    \le
    |M_t^\varepsilon|+R_t-c_\varepsilon t
    \le
    \frac B4+K-c_\varepsilon\delta
    \le-C,
    &&\delta\le t\le t_0.
\end{aligned}
\]
Hence \(\sup_{0\le t\le t_0}L_t\le B\) and
\(\inf_{0\le t\le t_0}L_t\le-C\) on an event of positive probability.
\end{proof}

\begin{proposition}[Uniform explosion below a level]\label{prop:b-halfline-explosion}
Assume that Condition~\ref{cond:b-as-coeff} holds.  Then for every \(A\in\R\)
there exist \(t_*>0\) and \(p>0\) such that
\[
    \inf_{x\le A}\Pp(T_-^x\le t_*)\ge p .
\]
\end{proposition}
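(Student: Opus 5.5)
The plan is to reduce the statement to Lemma~\ref{lem:b-left-tail-entrance}. Fix \(A\in\R\), and fix \(a>0\) as in that lemma: \(-a\le a_-\) and \(\int_a^\infty \dd u/g_-(-u)<1\). Next choose \(c>0\) with \(\Pp(\sup_{0\le t\le1}L_t\le c)\ge\frac12\), which is possible since \(L\) is c\`adl\`ag. Set \(D:=-a-c-1\) and \(S:=\tau_{D,-}^x\).

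On \(\{S<T_-^x\wedge T_+^x\}\) we have \(X^x_S\le D\). The strong Markov property of \(L\) makes \((L_{S+t}-L_S)_{t\ge0}\) independent of \(\mathcal F_S\) and equal in law to \(L\). Lemma~\ref{lem:b-left-tail-entrance} then gives \(T_-^x\le S+1\) with conditional probability at least \(\frac12\). If instead \(S\) is not before the explosion time, then (on the events constructed below, where the path stays bounded above) the explosion is to \(-\infty\) and happens before \(S\). Hence it suffices to find \(t_1>0\) and \(p_1>0\) such that
\[
\inf_{x\le A}\Pp\bigl(\tau_{D,-}^x\wedge T_-^x\le t_1\bigr)\ge p_1 ,
\]
together with \(T_+^x>\tau_{D,-}^x\wedge T_-^x\) on the relevant event. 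We may then take \(t_*=t_1+1\) and \(p=p_1/2\). For \(x\le D\) this is trivial with \(S=0\), so only \(x\in(D,A]\) matters. On the compact interval \([D,A+2]\) set \(M:=\sup_{[D,A+2]}|b|\). Every event below is built so that the path stays in \((D,A+2)\) until it drops to \(D\) or below. Up to that time the drift therefore contributes at most \(Mt\) in absolute value, and right explosion before \(S\) is excluded automatically.

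\emph{Case \(\nu((-\infty,0))>0\).} As in Lemma~\ref{lem:b-no-bounded-trapping}, pick \(0<\ell<r\) with \(\nu([-r,-\ell])>0\), and split \(L=Y+Z\), where \(Z\) collects the jumps in \([-r,-\ell]\). Choose \(B\le\frac12\), then \(\delta\in(0,1)\) with \(M\delta\le\frac12\) and \(\Pp(\sup_{t\le\delta}|Y_t|\le B)>0\) (right-continuity at \(0\)), and \(m\) with \(m\ell>A+2-D\). The event
\[
\Bigl\{\text{\(Z\) has at least \(m\) jumps in \([0,\delta]\)}\Bigr\}\cap\Bigl\{\sup_{t\le\delta}|Y_t|\le B\Bigr\}
\]
has positive probability by independence, and this probability does not depend on \(x\). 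On it, since \(Z\le0\), the path cannot exceed \(A+1\) before \(\delta\) while it remains above \(D\). By time \(S_m\le\delta\) it has been pushed below \(D\), by the same inequality as in Lemma~\ref{lem:b-no-bounded-trapping}.

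\emph{Case \(\nu((-\infty,0))=0\) and \(b+\kappa_+>0\) everywhere.}
\begin{itemize}
\item If \(\kappa_+<\infty\), write \(L_t=J_t-\kappa_+t\) with \(J\) a driftless subordinator, and set \(\eta:=\min_{[D,A+2]}(b+\kappa_+)>0\). Put \(t_1:=(A+2-D)/\eta\). On \(\{J_{t_1}\le1\}\), which has positive probability by Lemma~\ref{lem:b-small-subordinator-event}, we have \(X_t\le x-\eta t+1\) while the path is in \((D,A+2)\). So the path stays below \(A+1\) and reaches \((-\infty,D]\) before \(t_1\).
\item If \(\kappa_+=\infty\), apply Lemma~\ref{lem:b-infinite-positive-variation-lower-event} with \(t_0\le1\) chosen so that \(Mt_0\le\frac12\), with \(B=\frac12\), and with \(C=A+2-D\). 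On that event, \(X_t\le A+1\) while the path stays above \(D\), and at a time where \(L_t\le-C\) we get \(X_t\le A+\frac12-C<D\).
\end{itemize}

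In each case the lower bound is uniform in \(x\in(D,A]\), because the event depends only on \(L\).

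The main obstacle I expect is exactly this uniformity together with the bookkeeping of exit times. The argument must ensure that on the chosen event the path stays where \(b\) is bounded until it first drops to \(D\) or below, so that there is no circular dependence between \(M\) and the upper barrier. This is why the barrier \(A+2\) and the constant \(M\) are fixed first, and the small times \(\delta,t_0\) are chosen afterwards. A second point to check is that the event used for \(T_-^x\le S+1\) is conditionally independent of the event used to reach \(D\), which follows from the strong Markov property at \(S\).
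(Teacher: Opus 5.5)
Your proof is correct and uses the same ingredients as the paper: the case split coming from Condition~\ref{cond:b-as-coeff}(i), the decomposition $L=Y+Z$ by negative jumps in $[-r,-\ell]$, Lemmas~\ref{lem:b-small-subordinator-event} and~\ref{lem:b-infinite-positive-variation-lower-event} for the descent, and Lemma~\ref{lem:b-left-tail-entrance} as the endgame. What differs is the organization.

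The paper builds, in each case, a single noise event that controls both the descent and the unit-time increment afterwards. In case (i) this is $\{J_{T_0+1}\le B\}$ with the deterministic restart time $T_0$; in case (ii) it is $\sup_{0\le t\le 2}|Y_t|\le C$ with $c=2C$ at $S_m$. The paper uses the strong Markov property only in case (iii), at a hitting time of $L$.

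You instead fix $c$ first, set $D=-a-c-1$, and apply the strong Markov property of $L$ once, at $\tau_{D,-}^x$, for all three cases. Each case then reduces to a short-time descent estimate on the compact window $[D,A+2]$. This is more modular, and it needs only $\sup_{[D,A+2]}|b|$ rather than the half-line infima $b_M$ and $\beta$. The paper's version, in two of the three cases, gives an explicit event depending only on $L$ and avoids conditioning on events defined through $X$.

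One wording point. The post-$\tau_{D,-}^x$ increment is not independent of your descent event itself, since that event involves $L$ on all of $[0,\delta]$. What you must condition on is the $\mathcal F_{\tau_{D,-}^x}$-measurable event $\{\tau_{D,-}^x\le t_1,\ \tau_{D,-}^x<T_-^x\wedge T_+^x\}$. Its probability, together with that of $\{\tau_{D,-}^x=T_-^x\le t_1\}$, dominates the probability of the descent event. Your reduction as formulated already does exactly this, so the bound $p=p_1/2$ stands.
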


\begin{proof}
By Condition~\ref{cond:b-as-coeff}(ii), \(g_-(-y)\) is positive and
nondecreasing for \(y\ge-a_-\).  On the same interval,
\(b(-y)\ge g_-(-y)\), and
\[
    \int_{-a_-}^\infty\frac{\dd u}{g_-(-u)}<\infty .
\]
Fix \(A\in\R\).  Choose \(a\ge\max\{1,1-a_-\}\) so large that
\[
    \int_a^\infty\frac{\dd u}{g_-(-u)}<1 .
\]

Next, we prove the desired assertion according to three cases.

\smallskip

\noindent{\it Case $(i)$: \(\nu((-\infty,0))=0\), \(\kappa_+<\infty\), and
\(b(x)+\kappa_+>0\) for every \(x\in\R\).}

\smallskip

In this case, we can write
\[
    L_t=J_t-\kappa_+t,
    \quad \text{where}\quad
    J_t
    :=
    \int_0^t\int_{(0,\infty)}z\,\mathcal N(\dd s,\dd z).
\]
Choose \(B>\max\{0,a_--A-1\}\).  Since \(g_-\) is nonincreasing,
\[
    b(y)+\kappa_+\ge g_-(y)\ge g_-(a_-)>0,
    \quad y\le a_- .
\]
On \([a_-,A+B+1]\), the function \(b+\kappa_+\) is continuous and
strictly positive.  Hence
\begin{equation}
\label{eq:b-beta-bound}
    \beta:=\inf_{y\le A+B+1}\bigl(b(y)+\kappa_+\bigr)>0 .
\end{equation}
Choose \(T_0>0\) so large that
\begin{equation}
\label{eq:b-T0-choice}
    A+B-\beta T_0\le-B-a-1,
\end{equation}
and set
\[
    E:=\{J_{T_0+1}\le B\}.
\]
Lemma~\ref{lem:b-small-subordinator-event} gives
\[
    q:=\Pp(E)>0 .
\]
Since \(J\) is nondecreasing
and nonnegative,
on \(E\),
\begin{equation}
\label{eq:b-J-increment-bound}
\begin{aligned}
    J_t&\le B,
    &&0\le t\le T_0+1,\\
    J_t-J_{T_0}&\le B,
    &&T_0\le t\le T_0+1 .
\end{aligned}
\end{equation}
Fix \(x\le A\).
On \(E\), if \(T_-^x\le T_0\), there is nothing
to prove.  Suppose \(T_-^x>T_0\).  Let
\[
    \sigma:=\inf\{0\le t\le T_0:X_t^x\ge A+B+1\}.
\]
If \(\sigma\le T_0\), then \(X_u^x<A+B+1\) for \(0\le u<\sigma\), so
\eqref{eq:b-beta-bound} gives
\(b(X_u^x)+\kappa_+\ge\beta\).
By
\eqref{eq:b-beta-bound} and
\eqref{eq:b-J-increment-bound},
\[
    A+B+1
    \le X_\sigma^x
    =
    x-\int_0^\sigma\bigl(b(X_u^x)+\kappa_+\bigr)\,\dd u+J_\sigma
    \le A-\beta\sigma+B
    \le A+B,
\]
which is a contradiction.  Thus,
\(\sigma>T_0\); that is,
\begin{equation}
\label{eq:b-pre-T0-upper-bound}
    X_t^x<A+B+1,
    \quad 0\le t\le T_0 .
\end{equation}
By \eqref{eq:b-beta-bound} and \eqref{eq:b-pre-T0-upper-bound},
\[
    b(X_u^x)+\kappa_+\ge\beta,
    \quad 0\le u\le T_0 .
\]
Hence
\begin{equation}
\label{eq:b-XT0-bound}
\begin{aligned}
    X_{T_0}^x
    &=
    x-\int_0^{T_0}\bigl(b(X_u^x)+\kappa_+\bigr)\,\dd u+J_{T_0}\\
    &\le A-\beta T_0+B
    \le-B-a-1 .
\end{aligned}
\end{equation}
The last inequality follows from \eqref{eq:b-T0-choice}.
On \(E\cap\{T_-^x>T_0\}\), \eqref{eq:b-pre-T0-upper-bound} rules out
the
right
explosion before \(T_0\), while
\eqref{eq:b-J-increment-bound} and \(\kappa_+\ge0\) give
\[
    \sup_{0\le t\le1}(L_{T_0+t}-L_{T_0})
    \le B .
\]
Thus \eqref{eq:b-XT0-bound} and
Lemma~\ref{lem:b-left-tail-entrance}, applied with \(S=T_0\) and \(c=B\),
give \(T_-^x<T_0+1\).
Consequently,
\[
    \inf_{x\le A}\Pp(T_-^x\le T_0+1)\ge q>0 .
\]

\smallskip

\noindent{Case $(ii)$: \(\nu((-\infty,0))>0\).}

\smallskip

Choose \(0<\ell<r<\infty\) such that
\(\nu([-r,-\ell])>0\).  Decompose
\[
    L_t=Y_t+Z_t,\quad
    Z_t:=\sum_{0<s\le t}\Delta L_s\,
    \mathbf 1_{\{\Delta L_s\in[-r,-\ell]\}} ,\quad t\ge0.
\]
Then \(Z:=(Z_t)_{t\ge0}\) is a compound Poisson process, independent of the residual
L\'evy process
\(Y:=(Y_t)_{t\ge 0}\).
If the interval \([-\!r,-\ell]\) intersects the
small-jump region, the corresponding compensation drift
\(t \int_{[-r,-\ell]\cap [-1,1]}z \,\nu(\dd z)\)
is absorbed into
\(Y\).  Let
\[
    N_t:=\#\{0<s\le t:\Delta L_s\in[-r,-\ell]\},
    \quad
    S_m:=\inf\{t>0:N_t= m\}.
\]
Since \(Y\) has c\`adl\`ag paths, we may choose \(C>0\) such that
\[
    \Pp\left(\sup_{0\le t\le2}|Y_t|\le C\right)>0 .
\]
Choose \(M>\max\{A+C+1,a_-+1\}\).  Since \(b(x)\ge g_-(x)>0\) on
\((-\infty,a_-]\) and \(b\) is continuous on \([a_-,M]\),
\[
    b_M:=
    \inf_{y\le M}b(y)>-\infty .
\]
Choose \(0<\delta\le1\) so small that
\begin{equation}
\label{eq:b-negative-jump-delta}
    A+C+
    |b_M|
    \delta<M .
\end{equation}
Choose \(m\) so large that
\begin{equation}
\label{eq:b-negative-jump-m}
    m\ell\ge A+3C+|b_M|
    +a+1 .
\end{equation}
Then
\begin{equation}
\label{eq:b-negative-jump-event}
    G:=
    \left\{
        S_m\le\delta,\quad \sup_{0\le t\le2}|Y_t|\le C
    \right\}
\end{equation}
has positive probability, by independence of \(Y\) and \(Z\) and by the Poisson
law of
\(N:=(N_t)_{t\ge 0}\).

Fix \(x\le A\) and work on \(G\).  If \(T_-^x\le S_m\), then
\(T_-^x\le1\) and there is nothing to prove.  Hence suppose \(T_-^x>S_m\).  For
\(0\le t\le S_m\wedge T_-^x\), the path cannot exceed \(M\).  Indeed, if
\(\eta\le S_m\wedge T_-^x\) were the first time with \(X_\eta^x\ge M\), then
\(X_u^x<M\) for \(u<\eta\), so
\(b(X_u^x)\ge
b_M\)
throughout the integral below.  Since
\(Z_\eta\le0\)
by the definition of \(Z\),
\eqref{eq:b-negative-jump-event} and \eqref{eq:b-negative-jump-delta} give
\[
    M\le X_\eta^x
    \le A+C+
    |b_M|
    \eta
    \le A+C+
    |b_M|
    \delta<M,
\]
a contradiction.
Therefore,
\begin{equation}
\label{eq:b-negative-jump-pre-Sm-bound}
    X_t^x<M,
    \quad 0\le t\le S_m\wedge T_-^x .
\end{equation}
Equations~\eqref{eq:b-negative-jump-pre-Sm-bound} and \eqref{eq:b-negative-jump-event} give
\begin{equation}\label{eq:b-negative-jump-XSm-bound}
\begin{aligned}
    X_{S_m}^x
    &=
    x-\int_0^{S_m}b(X_u^x)\,\dd u+Y_{S_m}+Z_{S_m}\\
    &\le
    A+C+
    |b_M|
    -m\ell\\
    &\le
    -a-2C-1,
\end{aligned}
\end{equation}
where the last line follows from \eqref{eq:b-negative-jump-m}.  Moreover,
\(Z\) is nonincreasing and \(S_m\le1\) on \(G\), so
\eqref{eq:b-negative-jump-event} yields
\[
    \sup_{0\le t\le1}(L_{S_m+t}-L_{S_m})
    \le
    \sup_{0\le t\le1}(Y_{S_m+t}-Y_{S_m})
    \le2C .
\]
On \(G\cap\{T_-^x>S_m\}\),
\eqref{eq:b-negative-jump-pre-Sm-bound} rules out right explosion before
\(S_m\).  Thus \eqref{eq:b-negative-jump-XSm-bound} and
Lemma~\ref{lem:b-left-tail-entrance}, applied with \(S=S_m\) and \(c=2C\),
give \(T_-^x<S_m+1\le2\).  Hence
\[
    \inf_{x\le A}\Pp(T_-^x\le2)\ge\Pp(G)>0 .
\]

\smallskip

\noindent{\it Case $(iii)$: \(\nu((-\infty,0))=0\) and \(\kappa_+=\infty\).}

\smallskip

Choose \(B>0\) such that
\[
    q:=\Pp\left(\sup_{0\le t\le1}L_t\le B\right)>0 .
\]
Choose \(M>\max\{A+B+1,a_-+1\}\).  Since \(b(x)\ge g_-(x)>0\) on
\((-\infty,a_-]\) and \(b\) is continuous on \([a_-,M]\),
\begin{equation}
\label{eq:b-infinite-variation-bM}
    b_M:=\inf_{y\le M}b(y)>-\infty .
\end{equation}
Choose \(0<\delta\le1\) so small that
\begin{equation}
\label{eq:b-infinite-variation-delta}
    A+B+|b_M|\delta<M ,
\end{equation}
and choose \(D\) so large that
\begin{equation}
\label{eq:b-infinite-variation-D}
    D\ge A+B+
    |b_M|+a+1 .
\end{equation}
Set
\[
    S:=\inf\{t\ge0:L_t\le-D\},
\]
and define
\begin{equation}
\label{eq:b-infinite-variation-E0}
    E_0:=
    \left\{
        S\le\delta,\quad
        \sup_{0\le t\le S}L_t\le B
    \right\}.
\end{equation}
Lemma~\ref{lem:b-infinite-positive-variation-lower-event} implies
\(\Pp(E_0)>0\), because its event with time horizon \(\delta\) is contained in
\(E_0\).  Moreover, \(E_0\in\mathcal F_S\).  By the strong Markov property of
\(L\) at \(S\), the event
\begin{equation}
\label{eq:b-infinite-variation-E1}
    E_1:=\left\{
        \sup_{0\le t\le1}(L_{S+t}-L_S)\le B
    \right\}
\end{equation}
has conditional probability \(q\) given \(\mathcal F_S\) on \(\{S<\infty\}\).
Hence \(E:=E_0\cap E_1\) has positive probability.

Fix \(x\le A\) and work on \(E\).  If \(T_-^x\le S\), then
\(T_-^x\le1\) and there is nothing to prove.  Suppose \(T_-^x>S\).  For
\(0\le t\le S\wedge T_-^x\), the path cannot exceed \(M\).  Indeed, if
\(\eta\le S\wedge T_-^x\) were the first time with \(X_\eta^x\ge M\), then
\(X_u^x<M\) for \(u<\eta\), so \eqref{eq:b-infinite-variation-bM} gives
\(b(X_u^x)\ge b_M\) throughout the integral below.  Equations
\eqref{eq:b-infinite-variation-E0} and
\eqref{eq:b-infinite-variation-delta} then give
\[
    M\le X_\eta^x
    \le A+B+|b_M|\eta
    \le A+B+|b_M|\delta<M,
\]
a contradiction.  Therefore,
\begin{equation}
\label{eq:b-infinite-variation-pre-S-bound}
    X_t^x<M,
    \quad 0\le t\le S\wedge T_-^x .
\end{equation}
Using \eqref{eq:b-infinite-variation-pre-S-bound},
\eqref{eq:b-infinite-variation-bM},
\eqref{eq:b-infinite-variation-E0} and
\eqref{eq:b-infinite-variation-D}, we obtain
\begin{equation}
\label{eq:b-infinite-variation-XS-bound}
\begin{aligned}
    X_S^x
    &=
    x-\int_0^S b(X_u^x)\,\dd u+L_S\\
    &\le A+|b_M|\delta-D\\
    &\le -B-a-1 .
\end{aligned}
\end{equation}
On \(E\cap\{T_-^x>S\}\),
\eqref{eq:b-infinite-variation-pre-S-bound} rules out right explosion before
\(S\), while \eqref{eq:b-infinite-variation-E1} gives
\[
    \sup_{0\le t\le1}(L_{S+t}-L_S)\le B .
\]
Thus \eqref{eq:b-infinite-variation-XS-bound} and
Lemma~\ref{lem:b-left-tail-entrance}, applied with \(c=B\), give
\(T_-^x<S+1\le2\).  Consequently,
\[
    \inf_{x\le A}\Pp(T_-^x\le2)\ge\Pp(E)>0 .
\] The proof is complete.
\end{proof}

\section{The Renewal Step and Proofs of Results}\label{sec:b-renewal}

\subsection{Proof of Theorem~$\ref{thm:b-as-explosion}$}
\begin{proposition}[Almost sure half-line renewal criterion]\label{prop:b-as-renewal}
Assume that Condition~$\ref{cond:b-right-return}$ holds, and let \(R\) be a level for
which it holds.  Suppose moreover that there exist \(t_*>0\) and
\(p\in(0,1]\) such that
\[
    \inf_{x\le R}\Pp(T_-^x\le t_*)\ge p .
\]
Then
\[
    \Pp(T_-^x<\infty)=1,\quad x\in\R .
\]
\end{proposition}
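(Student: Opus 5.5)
We build a sequence of stopping times that alternately attempts explosion from below \(R\) and waits for a return below \(R\), and we show that the attempts succeed geometrically often. Fix \(x\in\R\).

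First, right explosion is excluded: by Lemma~\ref{lem:b-no-right-explosion-from-return}, Condition~\ref{cond:b-right-return} gives \(\Pp(T_+^x=\infty)=1\). So on survival the process stays real-valued, and every state reached before \(T_-^x\) is a finite real number.

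Next we define the stopping times. Set \(\sigma_0:=\tau_{R,-}^x\) if \(x>R\), and \(\sigma_0:=0\) if \(x\le R\). Recursively, for \(k\ge0\) put
\[
    \sigma_{k+1}:=\inf\{t\ge\sigma_k+t_*:X_t^x\le R\},
\]
with the convention that \(\sigma_{k+1}=\infty\) if \(\sigma_k=\infty\) or \(T_-^x\le\sigma_k+t_*\). Two facts about this sequence are needed.

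\begin{enumerate}
\item[(a)] Suppose \(\sigma_k<T_-^x\) and \(T_-^x>\sigma_k+t_*\). Then \(X^x_{\sigma_k+t_*}\) is a finite real number. If it is \(\le R\), then \(\sigma_{k+1}=\sigma_k+t_*\). If it is \(>R\), the strong Markov property at the deterministic shift \(\sigma_k+t_*\) together with Condition~\ref{cond:b-right-return} gives \(\sigma_{k+1}<\infty\) almost surely. Since \(X^x_{\sigma_{k+1}}\le R\) by right-continuity, the process is again below \(R\).
\item[(b)] On \(\{\sigma_k<\infty\}\) we have \(X^x_{\sigma_k}\le R\). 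The strong Markov property at \(\sigma_k\) and the hypothesis then give
\[
    \Pp\bigl(T_-^x>\sigma_k+t_*\mid\mathcal F_{\sigma_k}\bigr)\le 1-p .
\]
\end{enumerate}

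We then iterate. Let \(D_k:=\{\sigma_k<\infty,\ T_-^x>\sigma_k+t_*\}\); this is the event that the \(k\)-th attempt fails. By (a), up to a null set, on \(\{T_-^x=\infty\}\) every \(\sigma_k\) is finite and every \(D_k\) occurs. Since \(D_{k-1}\in\mathcal F_{\sigma_k}\), combining with (b) yields
\[
    \Pp(D_0\cap\cdots\cap D_k)\le(1-p)\,\Pp(D_0\cap\cdots\cap D_{k-1})\le(1-p)^{k+1}.
\]
Hence \(\Pp(T_-^x=\infty)\le(1-p)^{k+1}\) for every \(k\). Letting \(k\to\infty\) gives \(\Pp(T_-^x=\infty)=0\) when \(p<1\); when \(p=1\) the same bound gives zero directly.

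The main obstacle is the bookkeeping with the absorbing states \(\pm\infty\) and the strong Markov property at the random times \(\sigma_k\) and \(\sigma_k+t_*\). Concretely, one must check that on \(\{\sigma_k<T_-^x\}\) the post-\(\sigma_k\) process is the solution started from the \(\mathcal F_{\sigma_k}\)-measurable point \(X^x_{\sigma_k}\le R\). One must also check that the event \(\{T_-^x>\sigma_k+t_*\}\) corresponds exactly to \(\{T_-^{X_{\sigma_k}}>t_*\}\) for the shifted process. Both follow from the time-homogeneous strong Markov property stated in the setup, using the fact that \(T_-^x\) after \(\sigma_k\) is the left explosion time of the restarted process.
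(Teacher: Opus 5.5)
Your argument is correct and is essentially the paper's proof. The paper uses the same renewal times and proves finiteness of the returns from the strong Markov property at $\sigma_k+t_*$ together with Condition~\ref{cond:b-right-return}; it gets the geometric bound from the strong Markov property at $\sigma_k$ and the uniform lower bound below $R$, tracking $A_n=\{\theta_n<T_-^x\}$ with $A_{n+1}\subset A_n\cap\{T_-^x>\theta_n+t_*\}$ where you track $D_0\cap\cdots\cap D_k$, and it drops your extra convention because the absorbing state $-\infty\le R$ already keeps $\theta_n$ finite after explosion, so it proves $\Pp(\theta_n<\infty)=1$ outright.
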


\begin{proof}
Fix \(x\in\R\).  By the first part of
Lemma~\ref{lem:b-no-right-explosion-from-return},
\(\Pp(T_+^x=\infty)=1\).  Hence, on \(\{T_-^x=\infty\}\), the process remains
\(\R\)-valued at every finite time.

\begin{samepage}
If \(x\le R\), set \(\theta_0^x:=0\); if \(x>R\), set
\(\theta_0^x:=\tau_{R,-}^x\).  For \(n\ge0\), define
\[
    \theta_{n+1}^x
    :=
    \inf\{t\ge\theta_n^x+t_*:X_t^x\le R\}
\]
with \(\theta_{n+1}^x:=\infty\) when \(\theta_n^x=\infty\).  Each \(\theta_n^x\) is a
stopping time.
\end{samepage}

We first prove that
\begin{equation}\label{eq:stopping-time-finite}
    \Pp(\theta_n^x<\infty)=1,\quad n\ge0.
\end{equation}
This holds for \(n=0\) by Condition~\ref{cond:b-right-return} and the
definition of \(\theta_0^x\).  For the induction step, suppose that
\(\theta_n^x<\infty\) almost surely.  Then \(\theta_n^x+t_*\) is a finite
stopping time.  By Lemma~\ref{lem:b-no-right-explosion-from-return},
\(T_+^x=\infty\) almost surely, and hence
\[
\begin{aligned}
    \Pp(\theta_{n+1}^x=\infty)
    &=
    \Pp\!\left(
        \theta_{n+1}^x=\infty,\ X_{\theta_n^x+t_*}^x\le R
    \right)+
    \Pp\!\left(
        \theta_{n+1}^x=\infty,\ X_{\theta_n^x+t_*}^x>R
    \right)\\
    &=
    \Pp\!\left(
        \theta_{n+1}^x=\theta_n^x+t_*=\infty,\
        X_{\theta_n^x+t_*}^x\le R
    \right)+
    \E\!\left[
        \mathbf 1_{\{X_{\theta_n^x+t_*}^x>R\}}
        \Pp\!\left(
            \tau_{R,-}^{X_{\theta_n^x+t_*}^x}=\infty
        \right)
    \right]\\
    &=0,
\end{aligned}
\]
where, in the last equality, the first term is zero since \(\theta_n^x<\infty\) almost
surely, and the second is zero by the strong Markov property at
\(\theta_n^x+t_*\) and Condition~\ref{cond:b-right-return}.  Thus
\(\theta_{n+1}^x<\infty\) almost surely, and
\eqref{eq:stopping-time-finite} follows by induction.

Set \(A_n:=\{\theta_n^x<T_-^x\}\).
Since $A_{n+1}\subset A_n\cap \{T_-^x>\theta_n^x+t_*\}$, the
strong Markov property at \(\theta_n^x\) yields
\[
\begin{aligned}
    \Pp(A_{n+1})
    &\le
    \E\!\left[
        \mathbf 1_{A_n}
        \mathbf 1_{\{T_-^x>\theta_n^x+t_*\}}
    \right]                                                   \\
    &=
    \E\!\left[
        \mathbf 1_{A_n}
        \Pp\!\left(
            T_-^x>\theta_n^x+t_*
            \,\middle|\,
            \mathcal F_{\theta_n^x}
        \right)
    \right]                                                   \\
    &=
    \E\!\left[
        \mathbf 1_{A_n}
        \Pp\!\left(T_-^{X_{\theta_n^x}^x}>t_*\right)
    \right]                                                   \\
    &\le
    (1-p)\Pp(A_n).
\end{aligned}
\]
Thus \(\Pp(A_n)\le(1-p)^n\).  By
\eqref{eq:stopping-time-finite},
\[
    \Pp(T_-^x=\infty)
    =\Pp(\theta_n^x<\infty,\ T_-^x=\infty)
    \le\Pp(A_n)\le(1-p)^n.
\]
Letting \(n\to\infty\) completes the proof.
\end{proof}

\begin{proof}[Proof of Theorem~$\ref{thm:b-as-explosion}$]
Let \(R\) be the level in Condition~\ref{cond:b-right-return}.  By
Proposition~\ref{prop:b-halfline-explosion}, applied with \(A=R\), there exist
\(t_*>0\) and \(p>0\) such that
\[
    \inf_{x\le R}\Pp(T_-^x\le t_*)\ge p .
\]
Together with Condition~\ref{cond:b-right-return}, this is exactly the hypothesis
of Proposition~\ref{prop:b-as-renewal}.
\end{proof}

\subsection{Proof of Theorem~$\ref{thm:b-main}$}
\begin{proposition}[Renewal criterion]\label{prop:b-renewal}
For $x\in \R$ and  each compact interval \(K\subset\R\), set
\(\tau_K^x:=\inf\{t\ge0:X_t^x\in K\}\).
Suppose that
\[
    \Pp(T_+^x\ge T_-^x)=1,\quad x\in\R,
\]
and that there exist a compact interval \(K\subset\R\) and constants
\(M<\infty\), \(t_*>0\) and \(p>0\), such that
\begin{equation}\label{eq:b-renewal-assumptions}
    \sup_{x\in\R}\E[T_-^x\wedge\tau_K^x]\le M
    \quad \mbox{and}\quad
    \inf_{x\in K}\Pp(T_-^x\le t_*)\ge p .
\end{equation}
Then
\[
    \sup_{x\in\R}\E T_-^x<\infty .
\]
\end{proposition}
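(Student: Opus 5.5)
We will use a cycle (renewal) argument with cycles of length \(t_*\), controlled by the strong Markov property. Fix \(x\in\R\). Define stopping times recursively: \(\sigma_0^x:=T_-^x\wedge\tau_K^x\). For \(n\ge0\), on \(\{\sigma_n^x<T_-^x\}\) (so \(X_{\sigma_n^x}^x\in K\)) put
\[
\rho_n^x:=(\sigma_n^x+t_*)\wedge T_-^x,
\qquad
\sigma_{n+1}^x:=\inf\{t\ge\rho_n^x:X_t^x\in K\}\wedge T_-^x .
\]
On \(\{\sigma_n^x=T_-^x\}\) we set \(\sigma_{n+1}^x:=T_-^x\). The hypothesis \(\Pp(T_+^y\ge T_-^y)=1\) for all \(y\) guarantees that, before \(T_-^x\), the process stays in \(\R\) or has already exploded to \(-\infty\). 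So each stage is either "exploded" or "real-valued". It also makes the strong Markov property at these stopping times produce fresh copies \(X^{y}\) with \(y=X^x_{\rho}\in\R\). Moreover \(T_-^x\wedge\tau_K^x\) is the relevant entrance time from that point.

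\textbf{Step 1 (expected length of a cycle).} By the strong Markov property at \(\rho_n^x\) and the first bound in \eqref{eq:b-renewal-assumptions},
\[
\E\bigl[(\sigma_{n+1}^x-\sigma_n^x)\mathbf 1_{\{\sigma_n^x<T_-^x\}}\bigr]
\le t_*\Pp(\sigma_n^x<T_-^x)+\E\Bigl[\mathbf 1_{\{\rho_n^x<T_-^x\}}\E\bigl[T_-^{y}\wedge\tau_K^{y}\bigr]\big|_{y=X^x_{\rho_n^x}}\Bigr],
\]
which is at most \((t_*+M)\Pp(\sigma_n^x<T_-^x)\). We also have \(\E\sigma_0^x\le M\).

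\textbf{Step 2 (geometric decay of survival).} By the strong Markov property at \(\sigma_n^x\), using \(X^x_{\sigma_n^x}\in K\) and the second bound in \eqref{eq:b-renewal-assumptions},
\[
\Pp(\sigma_{n+1}^x<T_-^x)\le\Pp(\sigma_n^x<T_-^x,\ T_-^x>\sigma_n^x+t_*)\le(1-p)\Pp(\sigma_n^x<T_-^x).
\]
Hence \(\Pp(\sigma_n^x<T_-^x)\le(1-p)^n\). Since \(\sigma_n^x\ge n t_*\) on \(\{\sigma_n^x<T_-^x\}\), the event \(\{T_-^x=\infty\}\) is contained in \(\bigcap_n\{\sigma_n^x<T_-^x\}\), which is null. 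Thus \(T_-^x=\lim_n\sigma_n^x\) almost surely, because the sequence is eventually constant and equal to \(T_-^x\).

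\textbf{Step 3 (summation).} By monotone convergence,
\[
\E T_-^x=\E\sigma_0^x+\sum_{n\ge0}\E\bigl[(\sigma_{n+1}^x-\sigma_n^x)\mathbf 1_{\{\sigma_n^x<T_-^x\}}\bigr]\le M+(t_*+M)\sum_{n\ge0}(1-p)^n=M+\frac{t_*+M}{p}.
\]
This bound is uniform in \(x\), which proves the claim.

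\textbf{Main obstacle.} The delicate point is justifying the strong Markov step at \(\rho_n^x\) and \(\sigma_n^x\) on the extended state space. We need the identity \(\sigma_{n+1}^x-\rho_n^x=(T_-\wedge\tau_K)\circ\theta_{\rho_n^x}\) on \(\{\rho_n^x<T_-^x\}\), and we need \(T_-\) to shift correctly. This is where \(\Pp(T_+\ge T_-)=1\) is used: it excludes right explosion, which would otherwise make \(\tau_K=\infty\) with \(T_-=\infty\) on an absorbing state \(+\infty\). We will handle this by working on \(\{T_+^x=\infty\}\cup\{T_-^x<\infty\}\), a full-measure set, and invoking the time-homogeneous strong Markov property stated in the setup.
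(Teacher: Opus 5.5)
Your proof is correct, but it is organized differently from the paper's. The paper never builds a cycle sequence. It sets $F_N(x):=\E[T_-^x\wedge N]$ and applies the strong Markov property once at $\tau_K^x$ to get $F_N(x)\le M+\sup_{y\in K}F_N(y)$ for every $x$. It then uses the Markov property at the fixed time $t_*$ from $x\in K$ to get $\sup_K F_N\le t_*+(1-p)\bigl(M+\sup_K F_N\bigr)$. Because $F_N\le N<\infty$, this self-referential inequality can be rearranged, giving $\sup_x F_N\le (M+t_*)/p$ uniformly in $N$.

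The truncation at $N$ spares the paper from showing that the cycles exhaust $T_-^x$ and from handling possibly infinite quantities. Your explicit decomposition pays for this with Step 2. In exchange it is more transparent, it is exactly the Chow--Khasminskii cycle scheme, and it yields $\Pp(T_-^x<\infty)=1$ and a geometric bound on the number of cycles as byproducts.

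Two small points. First, in Step 2 the remark about $\{T_-^x=\infty\}$ is not justified by $\sigma_n^x\ge nt_*$. The inclusion only holds up to the null set $\{\sigma_n^x=\infty\}$, which Step 1 controls. It is also unnecessary: the nullness of $\bigcap_n\{\sigma_n^x<T_-^x\}$, together with $\sigma_n^x\le T_-^x$ and $\sigma_n^x=T_-^x\Rightarrow\sigma_{n+1}^x=T_-^x$, already gives eventual constancy. Second, in Step 1 you can bound the second term using $\Pp(\rho_n^x<T_-^x)\le(1-p)\Pp(\sigma_n^x<T_-^x)$, which you proved in Step 2. This improves your constant $M+(t_*+M)/p$ to the paper's $(M+t_*)/p$.
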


\begin{proof}
For \(N\ge0\), set \(F_N(x):=\E[T_-^x\wedge N]\).  Fix \(N>t_*\).
The strong Markov property at \(\tau_K^x\) gives us that for all $x\in \R$,
\begin{equation}\label{eq:b-global-renewal-bound}
\begin{aligned}
    F_N(x)
    &=
    \E[T_-^x\wedge N]                                      \\
    &=
    \E[(T_-^x\wedge N)\mathbf 1_{\{T_-^x\wedge N\le\tau_K^x\}}]
    +\E[(T_-^x\wedge N)\mathbf 1_{\{\tau_K^x<T_-^x\wedge N\}}] \\
    &=
    \E[(T_-^x\wedge N)\mathbf 1_{\{T_-^x\wedge N\le\tau_K^x\}}] \\
    &\quad+
    \E\!\left[
        \mathbf 1_{\{\tau_K^x<T_-^x\wedge N\}}
        \left(\tau_K^x+
        \bigl((T_-^x-\tau_K^x)\wedge(N-\tau_K^x)\bigr)\right)
    \right]                                      \\
    &=
    \E[T_-^x\wedge\tau_K^x\wedge N]
    +\E\!\left[
        \mathbf 1_{\{\tau_K^x<T_-^x\wedge N\}}
        \bigl((T_-^x-\tau_K^x)\wedge(N-\tau_K^x)\bigr)
    \right]                                      \\
    &=
    \E[T_-^x\wedge\tau_K^x\wedge N]
    +\E\!\left[
        \mathbf 1_{\{\tau_K^x<T_-^x\wedge N\}}
        F_{N-\tau_K^x}(X_{\tau_K^x}^x)
    \right]                                      \\
    &\le
    \E[T_-^x\wedge\tau_K^x]
    +\E\!\left[
        \mathbf 1_{\{\tau_K^x<T_-^x\wedge N\}}
        F_N(X_{\tau_K^x}^x)
    \right]                                      \\
    &\le M+\sup_{y\in K}F_N(y).
\end{aligned}
\end{equation}
If \(x\in K\), then the
Markov property at time \(t_*\),
\eqref{eq:b-global-renewal-bound}, and \eqref{eq:b-renewal-assumptions} give
\[
\begin{aligned}
    F_N(x)
    &=
    \E[T_-^x\mathbf 1_{\{T_-^x\le t_*\}}]
    +
    \E\!\left[
        \mathbf 1_{\{T_-^x>t_*\}}
        \left(t_*+F_{N-t_*}(X_{t_*}^x)\right)
    \right]                                      \\
    &\le
    t_*+
    \E\!\left[
        \mathbf 1_{\{T_-^x>t_*\}}F_N(X_{t_*}^x)
    \right]                                      \\
    &\le
    t_*+(1-p)\left(M+\sup_{y\in K}F_N(y)\right).
\end{aligned}
\]
Taking the supremum over \(x\in K\) and rearranging,
\[
    \sup_{x\in K}F_N(x)
    \le
    \frac{t_*+(1-p)M}{p}.
\]
Together with \eqref{eq:b-global-renewal-bound}, this gives
\[
    \sup_{x\in\R}F_N(x)
    \le
    M+\frac{t_*+(1-p)M}{p}
    =
    \frac{M+t_*}{p},
\]
uniformly in \(N\).  Letting \(N\to\infty\) proves the claim.
\end{proof}

\begin{proof}[Proof of Theorem~$\ref{thm:b-main}$]
Condition~\ref{cond:b-lyapunov} and
Lemma~\ref{lem:b-no-right-explosion-from-return} give
\(\Pp(T_+^x=\infty)=1\) for every \(x\in\R\).
Proposition~\ref{prop:b-lyapunov-input} gives a compact interval \(K_0\) and
a constant \(M<\infty\) such that
\[
    \E[T_-^x\wedge\tau_{K_0}^x]\le M,
    \quad x\in\R .
\]
Choose \(A_0<\infty\) with \(K_0\subset(-\infty,A_0]\).  By
Proposition~\ref{prop:b-halfline-explosion}, applied with \(A=A_0\), we get
\[
    \inf_{x\in K_0}\Pp(T_-^x\le t_*)\ge p
\]
for some \(t_*>0\) and \(p>0\).  Proposition~\ref{prop:b-renewal} now gives
\(\sup_{x\in\R}\E T_-^x<\infty\).
\end{proof}

\section{Proof of Theorem \ref{cor:spde-explicit-correction}}
The aim of this section is to prove Theorem \ref{cor:spde-explicit-correction}.
Throughout the comparison argument, \((u,T)\) denotes a local weak solution
specified above.

Let
\[
    \varphi(x):=\frac{\pi}{2}\sin(\pi x),\quad x\in(0,1).
\]
Then \(\varphi>0\) on \((0,1)\), \(\varphi(0)=\varphi(1)=0\), and
\[
    -\varphi''=\pi^2\varphi,\quad
    \int_0^1\varphi(x)\,\d x=1.
\]
Define
\[
    \Phi(t):=\langle u(t),\varphi\rangle,
    \quad 0\le t<T.
\]

For every Borel set \(B\subset\R\setminus\{0\}\), set
\begin{equation}\label{eq:spde-projected-levy-measure}
    \lambda_{\sigma,\varphi}(B)
    :=
    \int_0^1\int_{\R\setminus\{0\}}
    \mathbf 1_B\bigl(\sigma\varphi(x)y\bigr)
    \,\lambda(\d y)\,\d x
\end{equation}
and
\[
\gamma_{\sigma,\varphi}
    :=
    \int_0^1\int_{\R\setminus\{0\}}
    \sigma\varphi(x)y
    \left(
        \mathbf 1_{\{|\sigma\varphi(x)y|\le1\}}
        -\mathbf 1_{\{|y|\le1\}}
    \right)
    \lambda(\d y)\,\d x.
\]
Let us 
verify that \(\lambda_{\sigma,\varphi}\) is a L\'evy measure and that
\(\gamma_{\sigma,\varphi}\) is absolutely finite. Indeed, by \eqref{eq:spde-projected-levy-measure},
\[
\begin{aligned}
&\int_{\R\setminus\{0\}}(1\wedge w^2)\,
\lambda_{\sigma,\varphi}(\d w)\\
&\quad={}
\int_0^1\int_{\R\setminus\{0\}}
\bigl(1\wedge (\sigma^2\varphi(x)^2y^2)\bigr)\,\lambda(\d y)\,\d x\\
&\quad\le
\bigl(1\vee\sigma^2\|\varphi\|_\infty^2\bigr)
\int_{\R\setminus\{0\}}(1\wedge y^2)\,\lambda(\d y)<\infty.
\end{aligned}
\]
Thus \(\lambda_{\sigma,\varphi}\) is a L\'evy measure.
Moreover,
\[
\begin{aligned}
&\int_0^1\int_{\R\setminus\{0\}}\sigma\varphi(x)|y|
\left|
\mathbf 1_{\{\sigma\varphi(x)|y|\le1\}}-\mathbf 1_{\{|y|\le1\}}
\right|\lambda(\d y)\,\d x\\
&\quad={}\int_{\{|y|>1\}}|y|\int_0^1
\sigma\varphi(x)\mathbf 1_{\{\sigma\varphi(x)|y|\le1\}}\,\d x\,\lambda(\d y)\\
&\quad\quad+
\int_{\{|y|\le1\}}|y|\int_0^1
\sigma\varphi(x)\mathbf 1_{\{\sigma\varphi(x)|y|>1\}}\,\d x\,\lambda(\d y).
\end{aligned}
\]
Since \(\varphi(x)=(\pi/2)\sin(\pi x)\) vanishes linearly at the spatial
boundary points \(0\) and \(1\), there exist constants \(c,C>0\) such that
\[
c\bigl(x\wedge(1-x)\bigr)
\le\sigma\varphi(x)
\le C\bigl(x\wedge(1-x)\bigr),
\quad 0<x<1.
\]
For the first term, the two bounds above and symmetry about \(1/2\) give
\[
\begin{aligned}
&\int_{\{|y|>1\}}|y|\int_0^1
\sigma\varphi(x)
\mathbf 1_{\{\sigma\varphi(x)|y|\le1\}}\,\d x\,\lambda(\d y)\\
&\quad\le C\int_{\{|y|>1\}}|y|\int_0^1
\bigl(x\wedge(1-x)\bigr)
\mathbf 1_{\{x\wedge(1-x)\le(c|y|)^{-1}\}}\,\d x\,\lambda(\d y)\\
&\quad\le Cc^{-2}\int_{\{|y|>1\}}|y|^{-1}\lambda(\d y)
\\
&\quad\le Cc^{-2}\lambda(\{|y|>1\})<\infty.
\end{aligned}
\]
For the second term, \(\sigma\varphi(x)|y|>1\) implies
\(|y|>(\sigma\|\varphi\|_\infty)^{-1}\).  Hence
\[
\begin{aligned}
&\int_{\{|y|\le1\}}|y|\int_0^1
\sigma\varphi(x)
\mathbf 1_{\{\sigma\varphi(x)|y|>1\}}\,\d x\,\lambda(\d y)\\
&\quad\le \sigma\|\varphi\|_\infty
\int_{\{|y|\le1\}}
\mathbf 1_{\{|y|>(\sigma\|\varphi\|_\infty)^{-1}\}}\,\lambda(\d y)\\
&\quad=\sigma\|\varphi\|_\infty
\lambda\bigl(\{(\sigma\|\varphi\|_\infty)^{-1}<|y|\le1\}\bigr)<\infty.
\end{aligned}
\]
 Combining with two estimates above,
\(\gamma_{\sigma,\varphi}\) is absolutely finite.

Set
\[
    \overline L_t
    :=\sigma\int_0^t\int_0^1
    \varphi(x)\,\Lambda(\d x,\d s)
    -\gamma_{\sigma,\varphi}t,
    \quad t\ge0.
\]
\begin{lemma}\label{lem:spde-projection-comparison}
The process \(\overline L:=(\overline L_t)_{t\ge0}\) is a L\'evy process with triplet
\((0,0,\lambda_{\sigma,\varphi})\), relative to the truncation
\(y\mathbf 1_{\{|y|\le1\}}\).
Let \(z=(z(t))_{0\le t<S}\) be the maximal local solution, with lifetime
\(S\), of the integral equation
\begin{equation}\label{sde1}
z(t)
=\Phi(0)+\int_0^t
\bigl(-\pi^2 z(s)+f(z(s))+\gamma_{\sigma,\varphi}\bigr)\,\d s
+\overline L_t,
\quad 0\le t<S.
\end{equation}
Then \(\Phi(t)\ge z(t)\) for \(0\le t<T\wedge S\).
\end{lemma}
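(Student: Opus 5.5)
The plan has two parts: first identify the law of the projected noise, then compare \(\Phi\) with \(z\) through a pathwise ODE argument, using that the noise terms cancel in the difference.

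\emph{The projected noise.} For \(\theta\in\R\) I would compute \(\E\exp\{i\theta\sigma\int_0^t\int_0^1\varphi(x)\,\Lambda(\d x,\d s)\}\) with the exponential formula for Poisson random measures. The truncation is harmless: \(\varphi\) is bounded, so the small-jump part is an \(L^2\) compensated integral, and the large-jump part is a finite sum on compacts. This gives the exponent
\[
t\int_0^1\int_{\R\setminus\{0\}}\Bigl(e^{i\theta\sigma\varphi(x)y}-1-i\theta\sigma\varphi(x)y\mathbf 1_{\{|y|\le1\}}\Bigr)\lambda(\d y)\,\d x .
\]
Next I would change the compensator from \(\mathbf 1_{\{|y|\le1\}}\) to \(\mathbf 1_{\{|\sigma\varphi(x)y|\le1\}}\). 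The cost is exactly \(i\theta t\gamma_{\sigma,\varphi}\), which is absolutely convergent by the estimates just proved. After the push-forward \(w=\sigma\varphi(x)y\), the exponent becomes the Lévy--Khintchine exponent with triplet \((0,0,\lambda_{\sigma,\varphi})\). Independent and stationary increments follow in the same way: apply the exponential formula to disjoint time strips of the Poisson random measure, which are independent with the same intensity. A càdlàg version is automatic from the stochastic integral. I expect this bookkeeping, namely keeping the two truncations and the drift \(\gamma_{\sigma,\varphi}\) consistent, to be the only delicate point.

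\emph{Projection and Jensen.} Take \(\psi=\varphi\) in the weak formulation; this is allowed because \(\varphi\in C^\infty([0,1])\) vanishes at \(0\) and \(1\). Since \(\varphi''=-\pi^2\varphi\), and \(\langle u(s),\varphi\rangle=\int_0^1 u(s,x)\varphi(x)\,\d x\) for a.e. \(s\), we get almost surely, for all \(t<T\),
\[
\Phi(t)=\Phi(0)+\int_0^t\bigl(-\pi^2\Phi(s)+F(s)+\gamma_{\sigma,\varphi}\bigr)\,\d s+\overline L_t,
\qquad F(s):=\int_0^1 f(u(s,x))\varphi(x)\,\d x .
\]
Here \(F\) is locally integrable by the integrability requirement on \((u,T)\). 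Because \(\varphi(x)\,\d x\) is a probability measure and \(f\) is convex with \(u(s,\cdot)\in L^1\) for a.e. \(s\), Jensen's inequality gives \(F(s)\ge f(\Phi(s))\) for a.e. \(s<T\).

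\emph{Comparison.} Set \(D:=\Phi-z\) on \([0,T\wedge S)\). The noise \(\overline L\) and the drift \(\gamma_{\sigma,\varphi}\) cancel, so \(D\) is absolutely continuous with \(D(0)=0\) and
\[
D'(s)=-\pi^2D(s)+F(s)-f(z(s))\ge -\pi^2D(s)+f(\Phi(s))-f(z(s))\quad\text{a.e.}
\]
Fix \(t<T\wedge S\). Both \(\Phi\) and \(z\) are càdlàg, hence bounded on \([0,t]\), so \(f\) has a Lipschitz constant \(C_t\) on their joint range. On \(\{D<0\}\) this gives
\[
D'\ge-(\pi^2+C_t)|D|,
\]
and therefore \((D^-)'=-\mathbf 1_{\{D<0\}}D'\le(\pi^2+C_t)D^-\) a.e. on \([0,t]\). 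Since \(D^-(0)=0\), Gronwall's inequality yields \(D^-\equiv0\) on \([0,t]\). As \(t<T\wedge S\) was arbitrary, \(\Phi(t)\ge z(t)\) for \(0\le t<T\wedge S\). This last step is routine once the cancellation of the noise is recorded, so the main obstacle remains the Lévy identification in the first part.
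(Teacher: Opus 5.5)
Your proposal is correct and follows the paper's proof essentially step for step. The paper likewise identifies $\overline L$ via the exponential formula: switching the truncation to $\mathbf 1_{\{|\sigma\varphi(x)y|\le1\}}$ costs exactly $\gamma_{\sigma,\varphi}$, and pushing forward gives $\lambda_{\sigma,\varphi}$. It then tests against $\varphi$, applies Jensen with respect to the probability measure $\varphi(x)\,\mathrm{d}x$, and runs Gronwall on the negative part of $\Phi-z$ once the noise cancels. Your extra remarks on independent increments are harmless additions, as is your cruder constant $\pi^2+C_t$; the paper instead drops the favourable $-\pi^2e^-$ term.
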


\begin{proof}

The Poisson representation of \(\Lambda\) and the exponential formula for
Poisson integrals (see, e.g., \cite[Theorem~2.3.8, (2.9), and
Corollary~2.4.13]{Applebaum04}) give, for every \(\theta\in\R\),
\begin{align*}
\log\E e^{\mathrm i\theta\overline L_t}
&=-\mathrm i\theta\gamma_{\sigma,\varphi}t
+t\int_0^1\int_{\R\setminus\{0\}}
\Bigl(e^{\mathrm i\theta\sigma\varphi(x)y}-1
-\mathrm i\theta\sigma\varphi(x)y\mathbf 1_{\{|y|\le1\}}\Bigr)
\lambda(\d y)\,\d x\\
&=t\int_0^1\int_{\R\setminus\{0\}}
\Bigl(e^{\mathrm i\theta\sigma\varphi(x)y}-1
-\mathrm i\theta\sigma\varphi(x)y
\mathbf 1_{\{|\sigma\varphi(x)y|\le1\}}\Bigr)
\lambda(\d y)\,\d x\\
&=t\int_{\R\setminus\{0\}}
\Bigl(e^{\mathrm i\theta w}-1
-\mathrm i\theta w\mathbf 1_{\{|w|\le1\}}\Bigr)
\lambda_{\sigma,\varphi}(\d w).
\end{align*}
The last line is the L\'evy--Khintchine representation relative to the
truncation \(w\mathbf 1_{\{|w|\le1\}}\), so uniqueness gives the triplet
\((0,0,\lambda_{\sigma,\varphi})\).

Work pathwise on an event of probability one and fix \(t_0<T\wedge S\).
Taking \(\varphi\) as the test function in the weak formulation and using
\(-\varphi''=\pi^2\varphi\), we obtain, for \(0\le t\le t_0\),
\begin{equation}\label{eq:spde-projected-weak-form}
\begin{aligned}
\Phi(t)-\Phi(0)
={}&-\pi^2\int_0^t\Phi(s)\,\d s
+\int_0^t\int_0^1\varphi(x)f(u(s,x))\,\d x\,\d s+\gamma_{\sigma,\varphi}t+\overline L_t.
\end{aligned}
\end{equation}
By the definition of the solution class, \(\Phi\) is c\`adl\`ag on
\([0,T)\).  Since
\(\varphi(x)\,\d x\) is a probability measure on \((0,1)\), Jensen's
inequality gives, for almost every \(s\le t_0\),
\begin{equation}\label{eq:spde-projected-jensen}
    \int_0^1\varphi(x)f(u(s,x))\,\d x
    \ge f\left(\int_0^1\varphi(x)u(s,x)\,\d x\right)
    =f(\Phi(s)).
\end{equation}
On the other hand, \eqref{sde1} gives, for \(0\le t\le t_0\),
\[
    z(t)-z(0)
    =\int_0^t
    \bigl(-\pi^2 z(s)+f(z(s))+\gamma_{\sigma,\varphi}\bigr)\,\d s
    +\overline L_t.
\]
Subtracting the two identities and using \(z(0)=\Phi(0)\), we find that
\(e:=\Phi-z\) is absolutely continuous on \([0,t_0]\) and
\[
e(t)=\int_0^t\Bigl[-\pi^2e(s)
+\int_0^1\varphi(x)f(u(s,x))\,\d x-f(z(s))\Bigr]\,\d s.
\]
The c\`adl\`ag paths of \(\Phi\) and \(z\) are bounded on \([0,t_0]\).
Let \(C_{t_0}\) be a Lipschitz constant of \(f\) on an interval containing
their ranges, and set \(e^-:=\max\{-e,0\}\).
Since \(e^-(0)=0\), the Sobolev truncation rule
(see \cite[Theorem~4.4(iii)--(iv), p.~153]{EvansGariepy15}) and
\eqref{eq:spde-projected-jensen} give, for \(0\le t\le t_0\),
\[
\begin{aligned}
e^-(t)
&=e^-(t)-e^-(0)\\
&=\int_0^t\mathbf 1_{\{e(s)<0\}}
\Biggl[\pi^2e(s)+f(z(s))-\int_0^1\varphi(x)f(u(s,x))\,\d x\Biggr]\,\d s\\
&\le\int_0^t\mathbf 1_{\{e(s)<0\}}
\Bigl[\pi^2e(s)+f(z(s))-f(\Phi(s))\Bigr]\,\d s\\
&=\int_0^t\mathbf 1_{\{e(s)<0\}}
\Bigl[-\pi^2e^{-}(s)+f(z(s))-f(\Phi(s))\Bigr]\,\d s\\
&\le C_{t_0}\int_0^t e^-(s)\,\d s.
\end{aligned}
\]
In particular,
the integral form of Gronwall's inequality yields
\(e^-=0\) on \([0,t_0]\).  As \(t_0<T\wedge S\) was arbitrary,
\(\Phi(t)\ge z(t)\) for \(0\le t<T\wedge S\).
\end{proof}

Set \(X_t:=-z(t)\) for \(0\le t<S\) and
\(L_t^X:=-\overline L_t\) for \(t\ge0\).  By \eqref{sde1},
\(X:=(X_t)_{0\le t<S}\) is the maximal local solution, with lifetime \(S\), of
\[
    X_t
    =X_0-
    \int_0^t\bigl(\pi^2X_s+f(-X_s)+\gamma_{\sigma,\varphi}\bigr)\,\d s
    +L_t^X,
    \quad 0\le t<S,
\]
where \(X_0=-\Phi(0)\).  Denote its left and right explosion times by
\(T_-^X\) and \(T_+^X\), respectively.  By the lifetime convention above,
\(S=T_-^X\wedge T_+^X\).
Moreover, \(L^X:=(L^X_t)_{t\ge0}\) has zero drift and a L\'evy measure
\begin{equation}\label{eq:spde-reflected-levy-measure}
    \nu_X(B)
    :=
    \int_0^1\int_{\R\setminus\{0\}}
    \mathbf 1_B\bigl(-\sigma\varphi(x)y\bigr)
    \,\lambda(\d y)\,\d x.
\end{equation}

For \(F\in C^2(\R)\), write
\begin{equation}\label{eq:spde-jump-operator}
\begin{aligned}
\mathcal I_XF(x)
={}&
\int_{\{|y|\le1\}}
\left[F(x+y)-F(x)-yF'(x)\right]\nu_X(\d y)\\
&+
\int_{\{|y|>1\}}
\left[F(x+y)-F(x)\right]\nu_X(\d y).
\end{aligned}
\end{equation}
Define
{\small
\[
\mathcal D_X:=\biggl\{F\in C^2(\R):
\sup_{x\in K}\int_{\R}
\left|F(x+y)-F(x)-yF'(x)\mathbf 1_{\{|y|\le1\}}\right|
\nu_X(\d y)<\infty \text{ for every compact }K\subset\R\biggr\}.
\]
}
For \(F\in\mathcal D_X\), write
\[
    \mathcal L_XF(x)
    :=-b_X(x)F'(x)+\mathcal I_XF(x),
    \quad
    b_X(x):=\pi^2 x+f(-x)+\gamma_{\sigma,\varphi},
\] which is the infinitesimal generator of the process $X$.

The following statement can be regarded as a general form of Theorem \ref{cor:spde-explicit-correction}.

\begin{theorem}\label{thm:spde-lyapunov-correction}
Let \(f\) be a nonnegative convex function such that
\[
    \int^\infty\frac{\d r}{f(r)}<\infty,
\]
and assume that \(\lambda((0,\infty))>0\).  Suppose that there exist \(R\in\R\) and a
nonnegative function \(V\in\mathcal D_X\) such that
\[
    \lim_{x\to+\infty}V(x)=\infty,
    \quad
    \mathcal L_XV(x)\le0,\,\, x>R .
\]
Then, for every deterministic \(u_0\in L^2(0,1)\), every local weak
solution \((u,T)\) in the preceding sense satisfies
\[
    \Pp(T<\infty)=1 .
\]
\end{theorem}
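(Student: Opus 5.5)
The plan is to apply Theorem~\ref{thm:b-as-explosion} to the process \(X=-z\) with drift \(b_X(x)=\pi^2x+f(-x)+\gamma_{\sigma,\varphi}\) and L\'evy measure \(\nu_X\). From \(T_-^X<\infty\) we then transfer finite lifetime to \((u,T)\) via Lemma~\ref{lem:spde-projection-comparison}.

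First, note that \(b_X\) is locally Lipschitz because \(f\) is. Since \(L^X\) has triplet \((0,0,\nu_X)\), the equation for \(X\) is exactly of the form \eqref{eq:b-finite-line-equation} with a Poisson random measure built from \(\mu\). Because \(-\overline L\) is a L\'evy process with that triplet, we can realize it as in Section~1, so all results of Sections~2--4 apply to \(X\).

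Next, verify the hypotheses of Theorem~\ref{thm:b-as-explosion}.
\begin{itemize}
\item Condition~\ref{cond:b-right-return} follows directly from the assumed Lyapunov function \(V\) and Lemma~\ref{lem:b-right-lyapunov-return}.
\item For Condition~\ref{cond:b-as-coeff}(i), the assumption \(\lambda((0,\infty))>0\) gives \(\nu_X((-\infty,0))>0\), since \(\sigma\varphi(x)>0\) maps positive \(y\) to negative jumps \(-\sigma\varphi(x)y\).
\item For Condition~\ref{cond:b-as-coeff}(ii), take \(x\to-\infty\). There \(b_X(x)=\pi^2x+f(-x)+\gamma_{\sigma,\varphi}\). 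Since \(f\) is convex, nonnegative and satisfies \(\int^\infty dr/f(r)<\infty\), we have \(f(r)/r\to\infty\) as \(r\to\infty\): a convex function of at most linear growth would make the integral diverge. Hence for \(r=-x\) large, \(f(r)-\pi^2 r+\gamma_{\sigma,\varphi}\ge f(r)/2\).
\end{itemize}
The function \(r\mapsto f(r)/2\) is eventually nondecreasing, because a convex function with superlinear growth is eventually increasing. So we may take \(g_-(x):=f(-x)/2\) on \((-\infty,a_-]\) with \(a_-\) very negative. This \(g_-\) is positive, nonincreasing in \(x\), and satisfies \(\int_{-\infty}^{a_-}du/g_-(u)=2\int_{-a_-}^\infty dr/f(r)<\infty\). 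Positivity of \(f\) near \(+\infty\) is automatic from the integrability assumption.

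Theorem~\ref{thm:b-as-explosion} therefore gives \(\Pp(T_-^X<\infty)=1\). By Lemma~\ref{lem:b-no-right-explosion-from-return} (via Condition~\ref{cond:b-right-return}) we also have \(T_+^X=\infty\) a.s. Hence \(S=T_-^X<\infty\) a.s., and \(z(t)=-X_t\to+\infty\) as \(t\uparrow S\).

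Now suppose \(T=\infty\) on an event of positive probability. On that event \(T\wedge S=S<\infty\), and Lemma~\ref{lem:spde-projection-comparison} gives \(\Phi(t)\ge z(t)\to\infty\) as \(t\uparrow S\). But \(\Phi(t)=\langle u(t),\varphi\rangle\) with \(u\) c\`adl\`ag in \(H^{-1}(0,1)\) on \([0,T)\ni S\), and \(\varphi\in H_0^1(0,1)\). So \(\Phi\) is c\`adl\`ag on \([0,T)\) and hence bounded on \([0,S]\). This is a contradiction, so \(\Pp(T<\infty)=1\). The same argument works on \(\{T>S\}\) in general, which yields \(T\le S\) a.s.

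The main obstacle I expect is the careful verification of Condition~\ref{cond:b-as-coeff}(ii). This means extracting a monotone Osgood minorant \(g_-\) for \(b_X\) from convexity, which requires superlinear growth of \(f\) to absorb the linear term \(\pi^2x\) and the constant \(\gamma_{\sigma,\varphi}\). A secondary point is justifying that the abstract SDE for \(X\), driven by \(-\overline L\) on the SPDE's probability space, falls within the framework of Section~1. That framework uses the natural filtration of a Poisson random measure with intensity \(ds\,\nu_X(dz)\), so all the Markov arguments there apply once we represent \(-\overline L\) through its own jump measure.
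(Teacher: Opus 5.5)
Your proposal is correct and follows essentially the same route as the paper. It verifies Condition~\ref{cond:b-as-coeff}(i) from $\nu_X((-\infty,0))=\lambda((0,\infty))>0$, Condition~\ref{cond:b-as-coeff}(ii) from the superlinear growth of $f$, and Condition~\ref{cond:b-right-return} from Lemma~\ref{lem:b-right-lyapunov-return}; it then applies Theorem~\ref{thm:b-as-explosion} and Lemma~\ref{lem:b-no-right-explosion-from-return} to get $S=T_-^X<\infty$ and concludes $T\le S$ by playing the boundedness of the c\`adl\`ag $\Phi$ on $[0,S]$ against $\Phi\ge z$. The only differences are cosmetic: you take the minorant $g_-(x)=f(-x)/2$ rather than $b_X$ itself, and you make explicit the point, harmless and glossed over in the paper, that $-\overline L$ can be realized through its own Poisson jump measure so that the Section~1 framework applies.
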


\begin{proof}
By \eqref{eq:spde-reflected-levy-measure},
\[
    \nu_X((-\infty,0))=\lambda((0,\infty))>0,
\]
so Condition~\ref{cond:b-as-coeff}(i) holds.  Moreover, convexity and the
Osgood condition imply \(f(r)/r\to\infty\).  Hence, for all sufficiently
large \(r\),
\[
    h(r):=f(r)-\pi^2 r+\gamma_{\sigma,\varphi}
\]
is positive and nondecreasing, and satisfies \(h(r)\ge f(r)/2\) and so
\[
    \int^\infty\frac{\d r}{h(r)}<\infty.
\]
Since \(b_X(x)=h(-x)\) for all sufficiently negative \(x\), this verifies
Condition~\ref{cond:b-as-coeff}(ii).  The assumed Lyapunov inequality and
Lemma~\ref{lem:b-right-lyapunov-return} give
Condition~\ref{cond:b-right-return}.  Lemma~\ref{lem:b-no-right-explosion-from-return}
therefore gives \(T_+^X=\infty\), so the lifetime of \(z=-X\) is
\(S=T_-^X\).  Theorem~\ref{thm:b-as-explosion} yields \(S<\infty\) almost
surely.  By the definition of \(T_-^X\),
\(\sup_{0\le t<S}z(t)=+\infty\).

It remains to compare \(T\) and \(S\).  On the event \(\{T>S\}\), \(\Phi\)
is a real-valued c\`adl\`ag function on the compact
interval \([0,S]\), and is therefore bounded there.  On the other hand,
the comparison \(\Phi(t)\ge z(t)\) for \(t<S\) and
\(\sup_{0\le t<S}z(t)=+\infty\) give a contradiction.  Consequently,
\(\Pp(T>S)=0\), and hence \(T\le S<\infty\) almost surely.
\end{proof}

Next, we will construct an explicit  Lyapunov function in Theorem \ref{thm:spde-lyapunov-correction} under some mild assumptions.

\begin{lemma}\label{lem:spde-log-lyapunov}
Suppose that
\[
    \int_{(-\infty,-1)}\log(1+|y|)\,\lambda(\d y)<\infty.
\]
Then there exists a nonnegative, nondecreasing function
\(V\in\mathcal D_X\) such that
\[
    \lim_{x\to\infty}V(x)=\infty,
    \quad
    \limsup_{x\to\infty}\mathcal I_XV(x)\le0.
\]
Moreover, \(V'(x)=1/(1+x)\) for \(x\ge1\).
\end{lemma}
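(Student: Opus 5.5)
The plan is to construct $V$ explicitly as a smoothed logarithm and then check its membership in $\mathcal D_X$ and the asymptotics of $\mathcal I_XV$ by splitting $\nu_X$ into small jumps, large positive jumps and large negative jumps. The large positive jumps are the only place where the logarithmic moment hypothesis on $\lambda$ enters.

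\emph{Construction.} Choose a smooth nondecreasing $\chi:\R\to[0,1]$ with $\chi=0$ on $(-\infty,0]$ and $\chi=1$ on $[1,\infty)$. Set $V'(x):=\chi(x)/(1+x)$ for $x>-1/2$ and $V'(x):=0$ otherwise, and let $V(x):=\int_{-\infty}^x V'(u)\,\dd u$. Then $V\ge0$ and $V$ is nondecreasing, with $V=0$ on $(-\infty,0]$. For $x\ge1$ we have $V(x)=\log(1+x)+c_0$ for some constant $c_0$, so $V'(x)=1/(1+x)$ there and $V(x)\to\infty$. Moreover $V\in C^2$, $V''$ is bounded on $\R$, and $|V''(x)|=(1+x)^{-2}$ for $x\ge1$.

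\emph{Identifying the large positive jumps of $\nu_X$.} By \eqref{eq:spde-reflected-levy-measure}, a jump $w=-\sigma\varphi(x)y$ with $w>1$ forces $y<0$ and $|y|>(\sigma\|\varphi\|_\infty)^{-1}$. Hence
\[
\int_{(1,\infty)}\log(1+w)\,\nu_X(\dd w)\le\int_{\{y<-(\sigma\|\varphi\|_\infty)^{-1}\}}\log\bigl(1+\sigma\|\varphi\|_\infty|y|\bigr)\,\lambda(\dd y).
\]
The part of this integral over $y<-1$ is finite by the hypothesis, since $\log(1+c|y|)\le \log(1\vee c)+\log(1+|y|)$. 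The part over $(\sigma\|\varphi\|_\infty)^{-1}<|y|\le1$ is finite because $\lambda$ has finite mass away from the origin. I expect this transfer of the log moment from $\lambda$ to $\nu_X$ to be the only real obstacle, and the above bound handles it. I will also use that $\nu_X(\{|w|>1\})<\infty$, since $\nu_X$ is a L\'evy measure.

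\emph{$V\in\mathcal D_X$.} For $|w|\le1$ the integrand is at most $\tfrac12\|V''\|_\infty w^2$, which is $\nu_X$-integrable. For $w<-1$, monotonicity and nonnegativity give $|V(x+w)-V(x)|\le V(x)$, which is bounded on compacts and is integrated against the finite mass $\nu_X((-\infty,-1))$. For $w>1$, using $V'\le \min\{1,1/(1+u)\}$ on $[0,\infty)$, we get $V(x+w)-V(x)\le\log(1+w)+C_K$ for $x$ in a compact set $K$, and this is integrable by the previous step. Hence $V\in\mathcal D_X$.

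\emph{$\limsup_{x\to\infty}\mathcal I_XV(x)\le0$.} Split $\mathcal I_XV$ as in \eqref{eq:spde-jump-operator} into three pieces.
\begin{itemize}
\item[(a)] The small-jump term is bounded by $\tfrac12\sup_{|u-x|\le1}|V''(u)|\int_{\{|w|\le1\}}w^2\,\nu_X(\dd w)$. This tends to $0$ as $x\to\infty$, because $V''(u)=-(1+u)^{-2}$ for $u\ge1$.
\item[(b)] The term over $w<-1$ is $\le0$, since $V$ is nondecreasing.
\item[(c)] For $x\ge1$ and $w>1$, $V(x+w)-V(x)=\log\bigl(1+w/(1+x)\bigr)$. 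This tends to $0$ pointwise as $x\to\infty$ and is dominated by $\log(1+w)$, which is $\nu_X$-integrable on $(1,\infty)$. Dominated convergence sends this term to $0$.
\end{itemize}
Adding (a)--(c) gives $\limsup_{x\to\infty}\mathcal I_XV(x)\le0$, which completes the proof.
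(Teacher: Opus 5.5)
Your proof is correct and follows essentially the same route as the paper. Both arguments construct a smoothed logarithm $V$ that vanishes on the negative half-line, transfer the logarithmic moment from $\lambda$ to $\nu_X$ on $(1,\infty)$, and split $\mathcal I_XV$ into three parts: small jumps (Taylor's formula with $V''(u)=-(1+u)^{-2}$), large negative jumps (nonpositive by monotonicity) and large positive jumps (dominated convergence with $\log(1+w)$). Your transfer step, via $\log(1+c|y|)\le\log(1\vee c)+\log(1+|y|)$, is spelled out more explicitly than the paper's one-line justification.
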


\begin{proof}
By \eqref{eq:spde-reflected-levy-measure} and the boundedness of \(\varphi\),
the assumed logarithmic moment implies
\begin{equation}\label{eq:spde-projected-log-moment}
    \int_{(1,\infty)}\log(1+y)\,\nu_X(\d y)<\infty.
\end{equation}
Indeed, marks in \((-\infty,-1)\) are controlled by the assumed logarithmic
moment, while any remaining marks contributing to the integral are bounded
away from zero and hence have finite \(\lambda\)-measure.

Choose a smooth function \(\chi:[0,\infty)\to[0,1]\) such that
\(\chi(x)=0\) for \(0\le x\le1/2\) and \(\chi(x)=1\) for \(x\ge1\), and define
\begin{equation}\label{eq:spde-log-lyapunov-definition}
V(x):=
\begin{cases}
0, & x\le0,\\[2mm]
\displaystyle\int_0^x\frac{\chi(r)}{1+r}\,\d r, & x>0.
\end{cases}
\end{equation}
Since \(\chi=0\) on \([0,1/2]\), the function \(V\) is identically zero
on \((-\infty,1/2]\), and hence \(V\in C^2(\R)\).  For \(x>0\),
\[
    V'(x)=\frac{\chi(x)}{1+x},
    \quad
    V''(x)=\frac{\chi'(x)}{1+x}
             -\frac{\chi(x)}{(1+x)^2},
\]
and therefore
\[
    \|V''\|_\infty\le\|\chi'\|_\infty+1<\infty.
\]
Moreover, \(0\le\chi\le1\) gives \(V'\ge0\).  Since \(V=0\) on
\((-\infty,0]\), the function \(V\) is nonnegative and nondecreasing.
Finally, since \(\chi(x)=1\) for \(x\ge1\),
\begin{equation}\label{eq:spde-log-tail-formulas}
    V'(x)=\frac1{1+x},
    \,\,
    V''(x)=-\frac1{(1+x)^2},
    \,\,
    V(x)=V(1)+\log\frac{1+x}{2},
    \quad x\ge1.
\end{equation}
In particular, \(V(x)\to\infty\) as \(x\to\infty\).

For \(|y|\le1\), Taylor's formula gives
\begin{equation}\label{eq:spde-log-small-jump-bound}
\left|V(x+y)-V(x)-yV'(x)\right|
\le \frac12\|V''\|_\infty y^2.
\end{equation}
Since \(0\le\chi\le1\), \eqref{eq:spde-log-lyapunov-definition} gives
\begin{equation}\label{eq:spde-log-growth-bound}
    0\le V(u)\le \log(1+u^+),\quad u\in\R.
\end{equation}
Fix a compact interval \(K\subset\R\) and set
\(M_K:=\max\{0,\sup K\}\).
For \(x\in K\) and \(y>1\),
\[
\left|V(x+y)-V(x)\right|
\le V(x+y)
\le\log(1+M_K+y).
\]
If \(y<-1\), monotonicity and nonnegativity give
\[
\left|V(x+y)-V(x)\right|
=V(x)-V(x+y)
\le V(x).
\]
Consequently, there is \(C_K<\infty\) such that, for all \(x\in K\),
\begin{equation}\label{eq:spde-log-large-jump-bound}
\left|V(x+y)-V(x)\right|
\le
\begin{cases}
C_K\bigl(1+\log(1+y)\bigr), & y>1,\\
C_K, & y<-1.
\end{cases}
\end{equation}
\eqref{eq:spde-log-small-jump-bound} and
\eqref{eq:spde-log-large-jump-bound}, together with
\[
    \int_{\{|y|\le1\}}y^2\,\nu_X(\d y)<\infty,
    \quad
    \nu_X((-\infty,-1))<\infty,
\]
and \eqref{eq:spde-projected-log-moment}, give, for every compact interval
\(K\subset\R\),
\[
\sup_{x\in K}\int_{\R}
\left|V(x+y)-V(x)-yV'(x)\mathbf 1_{\{|y|\le1\}}\right|
\nu_X(\d y)<\infty.
\]
Hence \(V\in\mathcal D_X\).

For \(x>2\), Taylor's formula and
\eqref{eq:spde-log-tail-formulas} give
\begin{equation}\label{eq:spde-log-tail-small-jump}
\left|
\int_{\{|y|\le1\}}
\left[V(x+y)-V(x)-yV'(x)\right]\nu_X(\d y)
\right|
\le
\frac{1}{2x^2}
\int_{\{|y|\le1\}}y^2\,\nu_X(\d y).
\end{equation}
For \(y>1\), \eqref{eq:spde-log-tail-formulas} gives
\begin{equation}\label{eq:spde-log-tail-positive-jump}
0\le V(x+y)-V(x)
=\log\left(1+\frac{y}{1+x}\right)
\le \log(1+y).
\end{equation}
For \(y<-1\), the monotonicity of \(V\) gives
\(V(x+y)-V(x)\le0\).  Hence, \eqref{eq:spde-jump-operator} and
\eqref{eq:spde-log-tail-small-jump} yield
\begin{align*}
\mathcal I_XV(x)
&\le
\frac{1}{2x^2}
\int_{\{|y|\le1\}}y^2\,\nu_X(\d y)
+\int_{(1,\infty)}
\log\left(1+\frac{y}{1+x}\right)\nu_X(\d y).
\end{align*}
The first term tends to zero as \(x\to\infty\).  The second term also tends
to zero by \eqref{eq:spde-log-tail-positive-jump},
\eqref{eq:spde-projected-log-moment}, and dominated convergence.  Therefore
\(\limsup_{x\to\infty}\mathcal I_XV(x)\le0\).
\end{proof}

Now, we are in a position to present the
\begin{proof}[Proof of Theorem $\ref{cor:spde-explicit-correction}$]\,\,
Recall that \(b_X(x)=\pi^2 x+f(-x)+\gamma_{\sigma,\varphi}\).

\noindent\textup{(1)} \textit{Assume that \textup{(i)} is satisfied.}\,\,
Let \(V\) be the function given in
Lemma~\ref{lem:spde-log-lyapunov}.  Then
\[
    \mathcal L_XV(x)=-b_X(x)V'(x)+\mathcal I_XV(x).
\]
For \(x\ge1\), using \(f\ge0\),
\[
    -b_X(x)V'(x)
    =-\frac{\pi^2 x+f(-x)+\gamma_{\sigma,\varphi}}{1+x}
    \le-\frac{\pi^2 x+\gamma_{\sigma,\varphi}}{1+x}.
\]
Therefore,
\[
    \limsup_{x\to\infty}\mathcal L_XV(x)
    \le-\pi^2<0.
\]
Thus \(\mathcal L_XV(x)\le0\) for all sufficiently large \(x\), and
Theorem~\ref{thm:spde-lyapunov-correction} gives the conclusion.

\noindent\textup{(2)} \textit{Assume that \textup{(ii)} is satisfied.}\,\,
Condition \textup{(ii)} and convexity allow us to choose \(A>0\) such that
\(f\) is positive and nonincreasing on \((-\infty,-A]\) and
\(\pi^2 x+\gamma_{\sigma,\varphi}\ge0\) for \(x\ge A\).  Define
\(g_+(x):=f(-x)\) for \(x\ge A\).  Then \(g_+\) is positive and
nondecreasing, and
\begin{equation}\label{eq:spde-corollary-right-tail}
    b_X(x)\ge g_+(x),\quad x\ge A,
    \quad
    \int_A^\infty\frac{\d x}{g_+(x)}
    =\int_{-\infty}^{-A}\frac{\d r}{f(r)}<\infty.
\end{equation}
Thus \eqref{eq:spde-corollary-right-tail} verifies
Condition~\ref{cond:b-lyapunov}.

The assumption \(\int^\infty \d r/f(r)<\infty\) and convexity imply
\(f(r)/r\to\infty\) as \(r\to\infty\).  Choose \(B>0\) such that
\(r\mapsto f(r)-\pi^2 r+\gamma_{\sigma,\varphi}\) is positive and
nondecreasing and
\(f(r)-\pi^2 r+\gamma_{\sigma,\varphi}\ge f(r)/2\) for \(r\ge B\).
Define
\(g_-(x):=f(-x)+\pi^2 x+\gamma_{\sigma,\varphi}\) for \(x\le-B\).
Then \(g_-\) is positive and nonincreasing, and
\begin{equation}\label{eq:spde-corollary-left-tail}
    b_X(x)=g_-(x),\quad x\le-B,
    \quad
    \int_{-\infty}^{-B}\frac{\d x}{g_-(x)}
    =\int_B^\infty
    \frac{\d r}{f(r)-\pi^2 r+\gamma_{\sigma,\varphi}}
    \le2\int_B^\infty\frac{\d r}{f(r)}<\infty.
\end{equation}
Thus \eqref{eq:spde-corollary-left-tail} verifies
Condition~\ref{cond:b-as-coeff}(ii).

By \eqref{eq:spde-reflected-levy-measure},
\(\nu_X((-\infty,0))=\lambda((0,\infty))>0\), and hence
Condition~\ref{cond:b-as-coeff}(i) holds.
Proposition~\ref{cor:b-right-return-from-osgood}, applied using
Conditions~\ref{cond:b-as-coeff}(ii) and~\ref{cond:b-lyapunov}, gives
Condition~\ref{cond:b-right-return}.  As in the proof of
Theorem~\ref{thm:spde-lyapunov-correction}, this implies
\(S=T_-^X<\infty\) and \(\sup_{0\le t<S}z(t)=+\infty\) almost surely.
Repeating the final contradiction argument in that proof gives
\(T\le S<\infty\) almost surely.
\end{proof}

\bigskip

\noindent {\bf Acknowledgements.}\,\,
The research of Pei-Sen Li is supported by the National Natural Science Foundation of China (No.\ 12271029). The research of Yuichi Shiozawa is supported by JSPS KAKENHI Grant Number JP23K25773.
The research of Jian Wang is supported by the NSF of China the National Key R\&D Program of China (2022YFA1006003) and the National Natural Science
Foundation of China (Nos. 12225104 and 12531007).

\endgroup

\end{document}